\author{Vuk Mili{{\v s}}i{{\'c}}\footnotemark[1]\,\,\footnotemark[2]}
\title{Very weak estimates for a rough Poisson-Dirichlet problem
with natural vertical boundary conditions}
\begin{document}
\renewcommand{\thefootnote}{\fnsymbol{footnote}}
\footnotetext[1]{Wolfgang Pauli Institute (WPI), UMI CNRS 2841,Vienna, AUSTRIA}
\footnotetext[2]{Laboratoire Jean Kuntzman (LJK), UMR CNRS 5523, Grenoble, FRANCE}
\renewcommand{\thefootnote}{\arabic{footnote}}

\maketitle

\begin{abstract}
This work is a  continuation of \cite{BrBoMi}; it
deals with rough boundaries in the simplified context
of a Poisson equation. We impose Dirichlet boundary
conditions on the periodic microscopic perturbation of a flat
edge on one side and natural homogeneous Neumann boundary
conditions are applied on the inlet/outlet of the domain.
To prevent oscillations on the Neumann-like boundaries, we
introduce a microscopic vertical corrector defined
in a rough quarter-plane. In \cite{BrBoMi} we studied {\em a priori}
estimates in this setting;  here we fully develop very weak estimates
{ \em {\`a} la } Ne{\v c}as \cite{Ne.Book.67}  in the weighted Sobolev spaces
on an unbounded domain. We obtain optimal estimates
which improve those  derived in \cite{BrBoMi}. We validate
these results numerically, proving first order results
for boundary layer approximation including the 
vertical correctors and a little less for the averaged 
wall-law introduced in the literature \cite{JaMiJDE.01,NeNeMi.06}.
\end{abstract}

\bigskip

{\em Keywords:}
wall-laws, rough boundary, Laplace equation, 
multi-scale modelling, boundary layers, 
error estimates, natural boundary conditions, vertical boundary  correctors.

\bigskip

{\em AMS subject classifications :}{76D05, 35B27, 76Mxx, 65Mxx}

\bigskip

\section{Introduction}

Cardio-vascular pathologies of the arterial wall represent 
a challenging area of investigation since they are one of the 
major cause of death in occidental countries. 
In this context, we are strongly interested in the accurate
description of blood-flow characteristics in stented arteries. Specifically,
we aim to understand the influence of a metallic wired stent (a medical device that 
cures some of these pathologies) on the circulatory 
system: our goal is to give a detailed description 
of the flow upward, inward and backward the region of stent's location. 
Actually the stent could be seen as a local perturbation
of a smooth boundary of the flow field.
The change,  from perturbed to smooth, strongly 
contradicts the hypothesis of periodicity 
faced by the author in \cite{BrMiQam,BrMiCras}.

Although this problem was tackled in 
\cite{JaMiSIAM.00,JaMiJDE.01}, our formalism
follows ideas presented in \cite{SaPeZa.85}
for interior homogenization problems, and it 
should be easy to extend it to other linear elliptic operators.
A first step in this direction was made in \cite{BrBoMi}
for a simplified Poisson problem: we set up a formal
approach to handle natural boundary conditions
at the inlet and the outlet of a straight rough domain;
then we proved rigorously, {\em via} specific {\em a priori} estimates,
that the boundary layer approximation - built by adding some 
vertical correctors - converges 
to the exact solution of the rough problem. These estimates
validated our approach.

In \cite{JaMiSIAM.00,JaMiJDE.01}, the authors
introduced, {\em via} very weak solutions \cite{Ne.Book.67}, $L^2$ estimates
of the error between various approximations and the
exact rough solution. These estimates were established in a 
 piecewise-smooth domain $\Oz$, limit of the rough geometry, when the roughness size $\epsilon$
goes to zero. For a fixed $\epsilon$, this approach allows to estimate
the error of an effective wall law approximation defined
only in the smooth domain. In \cite{BrBoMi},  we did
not obtain optimal estimates in the $L^2$  norm. The major difficulty 
was some dual norm of a normal derivative as
explained below. The present work fills this gap.

In section \ref{Section.Framework}, a short presentation of the problem and the material
 introduced in \cite{BrBoMi} are presented.  
The 
 difficulties that this paper overcomes are then faced in the next sections:
firstly, the microscopic approximations live on unbounded
domains and thus belong to weighted Sobolev spaces.
As a result, one needs to derive very weak solutions on a quarter-plane, 
in these spaces (see section \ref{vws}). Then one should connect
these microscopic very weak estimates to the macroscopic
problem we are really interested in. At this scale, the approximations
live in the bounded domain $\Oz$ and 
regular solutions belong to  a specific subspace of $H^1(\Oz)$.
 While this correspondence was introduced  for fractional test spaces
in \cite{BrBoMi}, here it is extended
to the trace spaces specific to the regular solutions above (section \ref{micro-macro}). In section \ref{section.vws}, we analyse the convergence
of the full boundary layer approximation towards the
exact solution using arguments introduced in the previous 
sections; optimal estimates are obtained.
Then for the first order wall-law, the convergence rate is shown to be equal to the one  
 obtained in the periodic case \cite{BrMiQam}. In a last part,
we provide a numerical validation of the theoretical results.
%%Using {\tt freefem++}, we construct boundary layers which may or not include
%%vertical correctors. 
We compare various multi-scale approximations with a numerical solution of 
the complete rough problem. This comparison is made in  Sobolev norms for various values of $\epsilon$. 
An accurate control of the mesh-size with respect to $\epsilon$ and a $\PP_2$ Lagrange finite element provide 
twofold results: the full boundary layer approximation shows the maximal convergence rate that one can 
expect from our numerical discretization, however, the standard averaged wall-law shows poorer results than expected. %%Up to now, a clear understanding of the latter observation has not been provided.

\section{The framework}\label{Section.Framework}

\subsection{The rough domain}

We set a straight horizontal domain $\Oe$, defined
by 
\begin{equation}\label{domain}
\Oe:=\left\{ x \in \RR^2 \text{ s.t. } x_1 \in ]0:1[ \text{ and } \epsilon f\left(\frac{x_1}{\epsilon}\right)<x_2<1\right\},
\end{equation}
where  $f$ is a Lipschitz continuous function, 1-periodic. 
Moreover we suppose that $f$ is bounded and negative definite, i.e. there exists a
positive constant $\delta$ such that $1-\delta < f(y_1)<\delta $ for all $y_1\in [0,2\pi]$.
The lateral 
boundaries are denoted by ${\Gamma_{\rm in}}$ and $\Gout$, and their restrictions to $]0,1[$,
${\Gamma_{\rm in}}p$ (resp. $\Goutp$).  The rough bottom of the domain  is called
$$
\Geps:=\left\{ x\in \RR^2 \text{ s.t. } x_2=\epsilon f\left(\frac{x_1}{\epsilon}\right)\right\},
$$ while the top is smooth and denoted by $\Gun$. In the interior of the domain one
sets the square piecewise-smooth domain $\Oz:=]0,1[^2$, whose lower interface
is denoted by $\Gz$, (see fig.\ref{Plaque}). 
\begin{figure}[h]
\begin{center}
\input{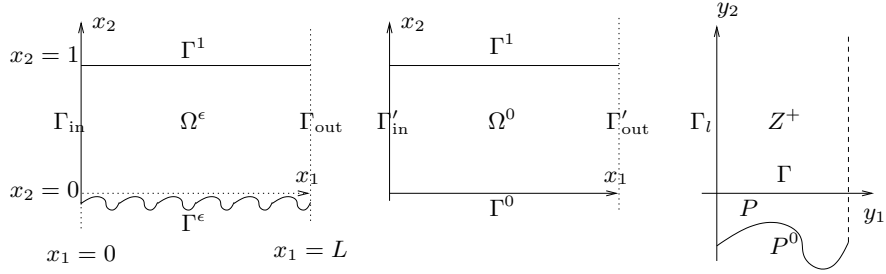}
\caption{\em (macroscopic) Rough, smooth and (microscopic) cell domains} \label{Plaque}
\end{center}
\end{figure}

\subsection{The exact rough problem}

In order to identify more precisely the influence 
of vertical non-periodic boundary conditions, we consider a singular perturbation
of a linear profile: we look for solutions of the  problem,
find $u\in H^1(\Oe)$ such that
\begin{equation}
\label{RugueuxCompletNP}
\left\{
\begin{aligned}
& - \Delta \uenp = 0,\text{ in  } \Omega^\epsilon, \\
& \uenp = \ovu ,\text{ on } \Gamma^1 , \uenp = 0 \text{ on } \Gamma^\epsilon, \\
& \ddn{\uenp}= 0,\text{ on } \gio.
\end{aligned}
\right.
\end{equation}

When $\epsilon$ goes to zero we recover the linear profile $u^0=\ovu x_2$,
while this profile is explicit what follows shall apply 
with few modifications to the case of an implicit function $u^0$
which  solves the  problem:
$$
\left\{
\begin{aligned}
& - \Delta \uz = 0,\text{ in  } \Omega^\epsilon, \\
& \uz = \ovu ,\text{ on } \Gamma^1 , \uz = 0 \text{ on } \Gamma^\epsilon, \\
& \ddn{\uz}= 0,\text{ on } \giop.
\end{aligned}
\right.
$$
One can show (see \cite{JaMiJDE.01, BrMiQam}) that 
$$
\nrm{\uenp - \uz}{L^2(\Oz)} \leq k\, \epsilon, 
$$
within the  very weak solution framework {\em {\`a} la } Ne{\v c}as, that
will be detailed below (see section \ref{vws}).

\subsection{First order approximation}
%\subsubsection{First order microscopic periodic cell problem}

When one wants to improve the accuracy of the zero order approximation,
one extends  $\uz$ linearly using a Taylor formula in the neighbourhood
of the fictitious interface $\Gz$. So  we have $u^0=\ovu x_2$ for every 
$x$ in $\Oe$. As the Dirichlet condition is no more satisfied on $\Geps$,  
one should solve a microscopic problem that reads: find $\beta$, whose 
Dirichlet norm is finite, such that
\begin{equation}
\label{A.cell}
\left\{
\begin{aligned}
& -\Delta \beta = 0,\text{ in } \zup,\\
& \beta = - y_2,\text{ on } P^0,\\
& \beta \text{ is } \yup.
\end{aligned}
\right.
\end{equation}
where $Z^+:=]0,1[\times\RR_+$, $P:=\{y \in \RR^2\text{ s.t. }y_1 \in ]0,1[, \;\, f(y_1)<y_2<0\}$ and
$P^0:=\{y \in \RR^2 \text{ s.t. } y_1 \in ]0,1[, \;\; y_2=f(y_1)\}$ (see fig. \ref{Plaque} right). In the literature 
this problem is widely studied (see \cite{JaMiPise.96,NeNeMi.06,BrMiQam}),
so we only sum up the main properties of $\beta$.
\begin{lem} There exists a unique solution of problem \eqref{A.cell}.
Moreover, 
$$
\lim_{y_2\to \infty}\beta(y_1,y_2)=\obeta
\text{ for every }y_1, \text{ and }
\obeta := \int_{(0,1)} \beta(y_1,0) dy_1.
$$
The convergence is exponential and one has a Fourier decomposition:
$$
\beta(y)=\sum_{k=-\infty}^\infty \beta_k e^{2\pi(-|k|y_2+iky_1)}, \, \forall y \in Z^+, \text{ where } \beta_k := \int_0^1 \beta(y_1,0) e^{2\pi i k y_1} dy_1.
$$  
\end{lem}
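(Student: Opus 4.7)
The plan is to handle the three claims in order: prove existence and uniqueness by a classical lift plus Lax--Milgram, then derive the Fourier expansion, from which the exponential convergence and the identification of $\bar\beta$ both follow at once.

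\emph{Stage 1: existence and uniqueness.} I would start by lifting the inhomogeneous Dirichlet data. Pick a smooth cut-off $\chi$ equal to $1$ in a neighbourhood of $y_2=0$ and compactly supported in $y_2$, and set $w(y):=-y_2\,\chi(y_2)$. Then $w$ is $y_1$-periodic, matches $-y_2$ on $P^0$, and both $\nabla w$ and $\Delta w$ belong to $L^2$ with compact support in $y_2$. Introduce the space
\[
V := \bigl\{\, v \in H^1_{\mathrm{loc}}(Z^+\cup P) \,:\, \nabla v \in L^2,\ v|_{P^0}=0,\ v \text{ is } y_1\text{-periodic}\,\bigr\},
\]
equipped with the Dirichlet inner product. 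The corresponding seminorm is actually a norm, since $\nabla v=0$ forces $v$ to be constant and $v|_{P^0}=0$ then gives $v\equiv 0$; completeness of $(V,\|\nabla\cdot\|_{L^2})$ follows from $H^1$-Poincar\'e inequalities on bounded substrips (using the trace on $P^0$ and periodicity) and a diagonal extraction. Writing $\tilde\beta:=\beta-w$, the cell problem becomes the coercive, continuous variational equation
\[
\int_{Z^+\cup P}\nabla\tilde\beta\cdot\nabla v\,dy \;=\; -\int_{Z^+\cup P}\nabla w\cdot\nabla v\,dy, \qquad v\in V,
\]
whose unique solvability follows from Lax--Milgram.

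\emph{Stage 2: Fourier representation and exponential decay.} On the flat half-strip $]0,1[\times(0,\infty)$, $\beta$ is harmonic and $y_1$-periodic, so I would expand
\[
\beta(y_1,y_2)=\sum_{k\in\mathbb{Z}} c_k(y_2)\,e^{2\pi iky_1}.
\]
Each coefficient satisfies $c_k''(y_2)=4\pi^2k^2\,c_k(y_2)$, so $c_k(y_2)=A_ke^{2\pi|k|y_2}+B_ke^{-2\pi|k|y_2}$ for $k\neq 0$ and $c_0(y_2)=A_0+B_0y_2$. Parseval applied to $\|\nabla\beta\|_{L^2(Z^+)}^2<\infty$ eliminates the exponentially growing modes ($A_k=0$ for $k\neq 0$) and the linearly growing one ($B_0=0$). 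Setting $\bar\beta:=A_0$ and $\beta_k:=B_k$ for $k\neq 0$ yields the stated decomposition. Evaluating at $y_2=0$ and averaging over $y_1\in(0,1)$ gives $\bar\beta=\int_0^1\beta(y_1,0)\,dy_1$, and Fourier inversion on the trace identifies the $\beta_k$. Finally, Cauchy--Schwarz on the tail yields
\[
|\beta(y_1,y_2)-\bar\beta| \;\le\; \Bigl(\sum_{k\neq 0}|\beta_k|^2\Bigr)^{1/2}\Bigl(\sum_{k\neq 0}e^{-4\pi|k|y_2}\Bigr)^{1/2} \;\le\; C\,e^{-2\pi y_2},
\]
giving exponential convergence uniformly in $y_1$.

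\emph{Main obstacle.} The delicate step is Stage 1: building the functional framework on the unbounded domain and verifying that $(V,\|\nabla\cdot\|_{L^2})$ is a genuine Hilbert space. A Poincar\'e inequality holds only on bounded substrips (exploiting either the Dirichlet trace on $P^0$ or periodicity together with zero mean), so global $L^2$ control of $v$ is not available and one must combine local estimates with a limiting argument. Once $V$ is in place, both Lax--Milgram and the subsequent Fourier/ODE analysis are routine.
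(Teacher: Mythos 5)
The paper does not prove this lemma: it explicitly defers to the literature (\cite{JaMiPise.96,NeNeMi.06,BrMiQam}) and only records the statement. Your proof is the standard one, and it is essentially correct. Stage~1 (lift the Dirichlet data with a compactly supported cut-off, set up the Dirichlet space with zero trace on $P^0$ and periodicity, verify that the Dirichlet seminorm is a norm, and apply Lax--Milgram) and Stage~2 (Fourier expansion in $y_1$ on the flat half-strip, ODE in $y_2$ for each mode, discard the growing branches using $\nabla\beta\in L^2$, then Cauchy--Schwarz on the tail) are precisely the arguments the cited references run; your version contains all the needed ingredients.

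Two small points worth noting. First, your argument about completeness of $V$ is a sketch; the local Poincar\'e on substrips plus diagonal extraction is the right idea, but one should check that the resulting limit is still $y_1$-periodic and has the stated zero trace --- this follows from $H^1$-convergence on each bounded substrip, but it is worth saying. Second, the paper's normalization $\beta_k := \int_0^1 \beta(y_1,0)\,e^{2\pi ik y_1}\,dy_1$ uses the opposite sign from the standard Fourier coefficient; since $\beta$ is real this is only a relabeling $k \mapsto -k$ and your identification via Fourier inversion is consistent with it, but a reader checking signs might pause there. Neither point is a gap; the approach and the estimates are sound.
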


If $\uenp$ were periodic, we could set the first order approximation to be
\begin{equation}\label{uiuep}
\uiuep := \uz + \left(\frac{\epsilon}{1+\epsilon \obeta}\right) \dd{\uz}{x_2}(x_1,0)\left( \beta\lrxe - \obeta x_2 \right).  
\end{equation}
But  this does not satisfy the homogeneous Neumann boundary conditions on $\gio$ when approximating 
the solution of \eqref{RugueuxCompletNP}. In \cite{BrBoMi} we introduced
a vertical corrector. We denote it by $\tin$; it solves the problem:
\begin{equation}
\label{QuaterPlane}% ({\cal P})
\left\{
\begin{aligned}
& -\Delta \tin = 0, \quad \text{ in } \Pi, \\
& \ddn{\tin}(0,y_2) = - \ddn{ \beta }(0,y_2) ,\quad \text{ on } E,\\
& \tin = 0, \quad  \text{ on } B.
\end{aligned}
\right.
\end{equation}
where we set
$\Pi := \cup_{k=0}^{+\infty} [ \zup + k \eu]$. The vertical boundary is denoted by $E:=\{y\in \Pi, y_1=0\}$ and  the bottom by $B:=\cup_{k=0}^{+\infty} \{y\in P^0 \pm k\eu\}$ (cf. fig \ref{pi}).
In what follows we will write $\Pi':=\rr^2$, $B':=\rr\times \{0\}$ and $E':=\{0\}\times \rr$.
\begin{figure}[h!] 
\begin{center}
\input{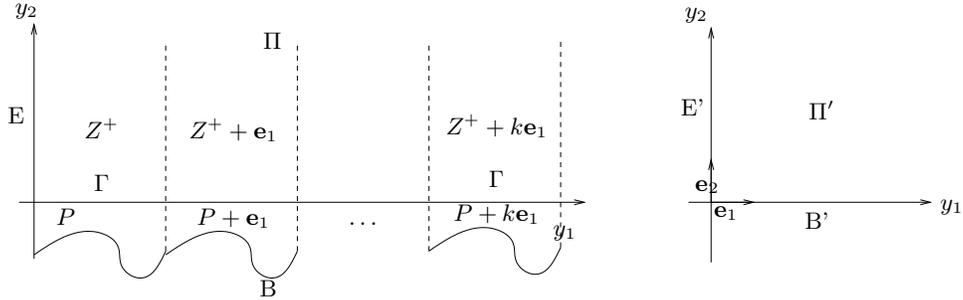}
\caption{\em Semi infinite microscopic domains: $\Pi$, the rough quarter-plane and $\Pi'$, the smooth one } \label{pi}
\end{center}
\end{figure} 
For the rest of the paper, we define the usual Sobolev space:
$$
%\begin{aligned}
\ws{m}{p}{\alpha}{\Omega}:=\left\{ v \in \cD'(\Omega) \, \text{ s.t. } \, |D^\lambda v| (1+\rho^2)^{\frac{ \alpha + |\lambda| - m}{2}{\nobreakspace}} \in L^p(\Omega),\, 0\leq |\lambda| \leq m \,\right\}
%\end{aligned}
$$
where $\rho$  is a distance to the point (0,-1) exterior to the domain $\Pi$. Shifting the latter
point to $(0,0)$ gives an equivalent norm so that we will not distinguish between these two distances.
We refer to \cite{Ha.71,Ku.80.book,AmGiGiI.94} and references therein, for the detailed studies of the weighted Sobolev
spaces in the context of elliptic  operators.

In the first part of this study \cite{BrBoMi}, we have rigorously shown the  results
regarding $\tin$:
\begin{thm}\label{BrBoMi}
There exists a unique solution $\tin \in \ws{1}{2}{\alpha}{\Pi}$ of problem \eqref{QuaterPlane} where $\alpha$ 
is such that $|\alpha|<\alpha_0:=\sqrt{2}/ \pi$, moreover 
$$
|\tin(y)| \leq \frac{k}{(1+\rho^2(y))^{\ud\left(1-\frac{1}{2M}\right)}}, \quad  \forall y \in \Pi \text{ s.t. }\rho(y)>1
$$
where $M$ is a positive constant such that $M<1/(1-2\alpha)\sim10$.
\end{thm}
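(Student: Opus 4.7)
The plan is to split the theorem into (i) existence and uniqueness in a weighted variational framework, and (ii) the pointwise decay, which will be derived \emph{a posteriori} from the weighted $L^2$ control.

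\textbf{Variational setting.} First I would introduce the energy space $V_\alpha := \{v \in \ws{1}{2}{\alpha}{\Pi} : v = 0 \text{ on } B\}$. The key analytic input is a weighted Hardy-Poincar\'e inequality of the form
$$
\int_\Pi v^2 (1+\rho^2)^{\alpha-1}\,dy \;\leq\; C_\alpha \int_\Pi |\nabla v|^2 (1+\rho^2)^\alpha\,dy, \qquad v \in V_\alpha,
$$
valid precisely for $|\alpha|<\alpha_0 = \sqrt{2}/\pi$. The threshold $\sqrt{2}/\pi$ arises as the square root of the first eigenvalue of the Laplace-Beltrami operator on the quarter-arc with mixed Dirichlet-at-bottom/Neumann-at-vertical-edge data, in the spirit of the Kondratiev / Amrouche-Girault-Giroire cone analysis adapted to this mixed setting. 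Once this is available, the bilinear form $a(u,v)=\int_\Pi \nabla u\cdot\nabla v\,dy$ is continuous and inf-sup stable between $V_\alpha$ and $V_{-\alpha}$. The Neumann datum $-\partial_n\beta(0,y_2)$ decays exponentially in $y_2$ by the Fourier decomposition of $\beta$ recalled in the cell lemma, hence defines a continuous linear form on the weighted trace space of $V_{-\alpha}$. A Banach-Ne\v{c}as-Babu\v{s}ka argument in the weighted setting then produces a unique $\tin\in V_\alpha$ for every admissible $\alpha$.

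\textbf{Pointwise decay.} Next I would transfer the weighted $L^2$ control into a pointwise bound. On a dyadic annulus $A_R := \{R < \rho(y) < 2R\}$ with $R \gg 1$, the set $B$ is locally just a periodic translate of $P^0$ and the exponentially small Neumann trace on $E$ makes no contribution, so $\tin$ is harmonic in $A_R$ with a vanishing rough-Dirichlet trace on $\partial A_R \cap B$. The membership $\tin\in V_\alpha$ yields
$$
\|\tin\|_{L^2(A_R)} \;\lesssim\; R^{1-\alpha}\,\|\tin\|_{\ws{1}{2}{\alpha}{\Pi}},
$$
which, combined with a rescaled Caccioppoli / De Giorgi-Moser iteration on $A_R$, upgrades into $\|\tin\|_{L^\infty(A_R)} \lesssim R^{-\mu}$ for some $\mu>0$. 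Feeding this bound back into the weighted energy inequality and iterating — each step improving $\mu$ by a fixed fraction of the gap to the critical Kondratiev exponent on the quarter-plane — one reaches after finitely many bootstraps the announced rate $\rho^{-(1-1/(2M))}$ for every $M<1/(1-2\alpha)$.

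\textbf{Main obstacle.} The hard part will be pinning down the sharp threshold $\alpha_0 = \sqrt{2}/\pi$ and the compatible decay parameter $M$. The rough oscillations of $B$ break the exact separation of variables that would be available on the smooth reference quarter-plane $\Pi'$, so one must compare $\tin$ with a harmonic function on $\Pi'$ through a two-scale correction and control the mismatch uniformly in $R$ in the weighted norm, without losing any power of $\rho$. Getting the exponent exactly $1-1/(2M)$, rather than the naive $\alpha$ produced by the zeroth iteration, is precisely what this bootstrap procedure — together with the precise spectral value $\sqrt{2}/\pi$ — is designed to achieve.
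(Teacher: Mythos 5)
Your proposal diverges from the paper's route on both halves of the theorem. The paper (quoting \cite{BrBoMi}) states explicitly that the existence and uniqueness rest on \emph{Poincar\'e-Wirtinger estimates} across the rough periodic cell, not on a weighted Hardy--Poincar\'e inequality combined with a cone spectral argument; indeed, in Section~\ref{vws} the author expressly rules out the weighted logarithmic Hardy route because it breaks down when $\alpha + n/p$ is an integer, which is precisely the situation encountered here for $\ws{1}{2}{0}{\cdot}$. Consequently your identification of the threshold $\alpha_0 = \sqrt{2}/\pi$ with the square root of the first mixed Laplace--Beltrami eigenvalue on a quarter-arc is a misattribution: that Kondratiev-type analysis is tied to the smooth cone $\Pi'$ and to the Hardy inequality you cannot use, whereas the paper's constant is inherited from the Poincar\'e--Wirtinger bound over the cell cross-sections of the rough quarter-plane $\Pi$. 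Your variational argument \emph{could} be made to work, but it would need to be rebuilt on the Poincar\'e--Wirtinger foundation to produce the stated $\alpha_0$, or else you would have to prove independently that your cone eigenvalue computation gives $\sqrt{2}/\pi$ for the rough domain, which is not obvious.

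For the pointwise decay, the gap is more serious. The paper obtains the rate $(1+\rho^2)^{-\frac{1}{2}(1-\frac{1}{2M})}$ from a Green's representation formula in the smooth quarter-plane $\Pi'$, where the explicit kernel decay and the exponential smallness of the Neumann datum combine into the announced exponent in a single convolution estimate. Your De Giorgi--Moser argument on dyadic annuli gives, directly from $\tin\in\ws{1}{2}{\alpha}{\Pi}$, the decay $\rho^{-\alpha}$ -- which is substantially weaker than $\rho^{-(1-\frac{1}{2M})}$ for $\alpha$ near $\sqrt{2}/\pi$ and $M$ near $10$. You acknowledge this and propose a bootstrap, but the claim that "each step improves $\mu$ by a fixed fraction of the gap to the critical exponent" is not substantiated: a Caccioppoli/Moser iteration on an annulus does not automatically convert an $L^\infty$ bound at one dyadic scale into a strictly better \emph{rate of decay} at the next, because the feedback loop requires quantitative control of how the improved bound re-enters the weighted energy estimate, and this mechanism is not described. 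Moreover nothing in your scheme singles out the specific exponent $1-\frac{1}{2M}$ with $M<1/(1-2\alpha)$; the Green's-function argument is precisely what makes this particular number appear. As written, the decay half of your proposal is a plausible outline of an alternative method, but it does not yet constitute a proof of the stated rate.
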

This theorem is based on Poincar{\'e}-Wirtinger estimates for the Sobolev part
and a Green's representation formula in a quarter-plane $\Pi'$.  Combining this two arguments, one obtains estimates on 
the decreasing properties of $\tin$. In the same way we define the vertical boundary layer corrector for $\Gout$ that solves the problem:
\begin{equation}
\label{QuaterPlaneLeft}% ({\cal P})
\left\{
\begin{aligned}
& -\Delta \tout = 0, \quad \text{ in } \Pi_-, \\
& \ddn{\tout}(0,y_2) = - \ddn{ \beta }(0,y_2) ,\quad \text{ on } E,\\
& \tout = 0, \quad  \text{ on } B_-.
\end{aligned}
\right.
\end{equation}
where we set
$\Pi_- \equiv \cup_{k=1}^{+\infty} [ \zup - k \eu]$, the bottom being denoted by $B_-=\cup_{k=1}^{+\infty} \{y\in P^0 - k\eu\}$.  In Theorem, \ref{BrBoMi}, as  everywhere else in the rest of the paper, the properties derived for $\tin$ are equally valid for $\tout$. Thanks to these correctors, one completes
the previous boundary layer approximation by writing:

\begin{equation}\label{fbla}
\uiue := \uz 
      + \left(\frac{\epsilon}{1+\epsilon \obeta}\right) \dd{\uz}{x_2}(x_1,0)\left( \beta\lrxe - \obeta x_2 
								  + \tin\lrxe + \tout\left(\frac{x_1-1}{\epsilon},\frac{x_2}{\epsilon}\right)
	\right).
\end{equation}
This approximation satisfies the  problem:
$$
\left\{ 
\begin{aligned}
&- \Delta \uiue = 0 , \text{ on } \Oe,\\
& \ddn{\uiue} =   \ddn{\tin}\left(\frac{1}{\epsilon},\frac{x_2}{\epsilon}\right) \text{ on } \Gout ,\quad \ddn{\uiue} =   \ddn{\tout}\left(\frac{1}{\epsilon},\frac{x_2}{\epsilon}\right) \text{ on } {\Gamma_{\rm in}}, \\%\quad 
& \uiue = \ovu + \epsilon  \dd{u^1 }{x_2}(x_1,0)\left( \left( \beta - \obeta +\tin \right) \left(\frac{x_1}{\epsilon},\frac{1}{\epsilon} \right)+  \tout\left(\frac{x_1-1}{\epsilon},\frac{1}{\epsilon}\right)\right) \text{ on } \Gun, \\
&  \uiue = 0 , \text{ on } \Geps 
\end{aligned}
\right.
$$
In other words, we correct the $O(1)$ error on the normal derivatives of the boundary layer $\beta$ on $\gio$
by the normal derivatives of the correctors at a distance $y_1=\ue$ of the vertical 
boundary $E$. On $\Gun$, the main errors are due to  $\tin$ and $\tout$, the 
contribution of $\beta(x/ \epsilon)-\obeta$ being exponentially small on $\Gun$.

In \cite{BrBoMi} we set up adequate tools to handle 
{\em a priori} estimates for the error. We adapt them 
to the specific boundary conditions in problem 
\eqref{RugueuxCompletNP}, and claim
\begin{thm}\label{a.priori}
 The boundary layer approximation $\uiue$ satisfies
the  error estimates in the Dirichlet norm:
$$
\nrm{\nabla (\uenp - \uiue ) }{L^2(\Oe)}\leq k \epsilon ^{\min \left( 1+\alpha ; \td-\frac{1}{2M} \right)}
$$
where the constant $k$ is independent of $\epsilon$ and the constants $\alpha$ and $M$ 
are defined as in Theorem \ref{BrBoMi}.
\end{thm}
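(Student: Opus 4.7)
\noindent\textbf{Proof plan for Theorem \ref{a.priori}.}
Set $r^\epsilon := \uenp - \uiue$. From $-\Delta\uenp = -\Delta\uiue = 0$ one gets that $r^\epsilon$ is harmonic in $\Oe$. The Dirichlet condition on $\Geps$ is preserved ($r^\epsilon|_{\Geps}=0$) because on the bottom $B$ one has $\beta(y)=-y_2$ and $\tin=\tout=0$, so the cell contributions exactly cancel the Taylor extension $\ovu x_2$ of $\uz$. The whole defect of $\uiue$ is thus concentrated on $\Gun$ and $\gio$: the trace on $\Gun$ is $-\epsilon\,\ovu/(1+\epsilon\obeta)$ times the evaluations of $\beta-\obeta$, $\tin$ and $\tout$ at $y_2=1/\epsilon$, whereas on $\gio$, after the cancellations built into the system satisfied by $\uiue$, only $\partial_n\tin(1/\epsilon,\cdot)$ (and its symmetric counterpart involving $\tout$) survives.

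To turn these boundary defects into a Dirichlet estimate I would follow the strategy of \cite{BrBoMi}. Pick a lift $R^\epsilon\in H^1(\Oe)$ of the trace $r^\epsilon|_{\Gun}$ supported in the strip $\{1-\epsilon<x_2<1\}$, so that $R^\epsilon$ vanishes on $\Geps$, and set $\tilde r^\epsilon := r^\epsilon - R^\epsilon$; this lives in the subspace of $H^1(\Oe)$ of functions vanishing on $\Geps\cup\Gun$. Testing $-\Delta r^\epsilon=0$ against $\tilde r^\epsilon$ and using Green's formula yields
\begin{equation*}
\|\nabla \tilde r^\epsilon\|_{L^2(\Oe)}^2 = -\int_{\Oe}\nabla R^\epsilon\cdot \nabla \tilde r^\epsilon + \int_{\gio}\bigl(\partial_n \uiue\bigr)\,\tilde r^\epsilon\,d\sigma.
\end{equation*}
Cauchy--Schwarz together with the trace inequality and the Poincar\'e inequality (available since $\tilde r^\epsilon=0$ on $\Geps\cup\Gun$) then reduces the proof to bounding $\|\nabla R^\epsilon\|_{L^2(\Oe)}$ and the dual norm $\|\partial_n\uiue\|_{H^{-1/2}(\gio)}$.

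The two contributions are estimated separately. \emph{Dirichlet lift.} The term $\beta-\obeta$ is exponentially small in $1/\epsilon$ and hence negligible; Theorem \ref{BrBoMi} yields $|\tin(y)|+|\tout(y)|\leq k(1+\rho^2)^{-(1-1/(2M))/2}$, so at $y_2=1/\epsilon$ the trace is $O(\epsilon^{1-1/(2M)})$ uniformly in $x_1$ and thus $\|r^\epsilon\|_{L^\infty(\Gun)}=O(\epsilon^{2-1/(2M)})$. Since the support of $R^\epsilon$ has height $\epsilon$, an explicit linear extension loses the factor $\epsilon^{-1/2}$, producing $\|\nabla R^\epsilon\|_{L^2(\Oe)}\leq k\,\epsilon^{3/2-1/(2M)}$. \emph{Neumann defect.} Passing to the microscopic variable $y=x/\epsilon$, the $H^{-1/2}(\gio)$ norm of $\partial_n\uiue$ becomes, up to the scaling jacobian, the $H^{-1/2}$ norm of $\partial_{y_1}\tin$ on the line $\{y_1=1/\epsilon\}\cap\Pi$; combining the weighted integrability $\tin\in\ws{1}{2}{\alpha}{\Pi}$ from Theorem \ref{BrBoMi} with a duality argument against $H^{1/2}$ test functions and rescaling back to the macroscopic variable gives a bound of order $\epsilon^{1+\alpha}$. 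The minimum of the two exponents $3/2-1/(2M)$ and $1+\alpha$ then produces the claim.

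The delicate step is the Neumann one. The gradient of $\tin$ decays strictly slower than $\tin$ itself, so a pointwise or $L^2$ estimate of $\partial_{y_1}\tin|_{y_1=1/\epsilon}$ is not sufficient; the argument must go through the dual $H^{-1/2}$ norm, exploiting cancellations against $H^{1/2}$ test functions on a scale-one line in the quarter plane. This is precisely where the sharp threshold $|\alpha|<\alpha_0=\sqrt{2}/\pi$ inherited from Theorem \ref{BrBoMi} enters, and where the exponent $1+\alpha$ originates.
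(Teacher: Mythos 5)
The paper itself does not prove Theorem~\ref{a.priori}: it simply remarks that ``the proof follows exactly the same ideas as in Theorem 5.1 in \cite{BrBoMi}'' and that the exponent improves because the underlying profile $u^0=\ovu x_2$ is linear (no second-order interior error). Your plan is consistent with the strategy the paper points to: decompose the defect of $\uiue$ into a Dirichlet residue on $\Gun$ and a Neumann residue on $\gio$, lift the former, test $r^\epsilon$ against the lifted remainder, and bound the resulting boundary functionals. Your identification of the two exponents is correct at a high level: the $L^\infty$ decay $|\tin(y)|\lesssim(1+\rho^2)^{-(1-1/(2M))/2}$ times the prefactor $\epsilon$ times a $\sqrt{\epsilon}$ area factor over the $O(\epsilon)$-height strip does give $\epsilon^{3/2-1/(2M)}$ for the lift, and the Neumann residue on $\gio$, once the $\partial_{y_1}\beta$ contributions from $\beta$ and $\tout$ have cancelled (a point you should state explicitly rather than leave implicit), reduces to $\partial_{y_1}\tin(1/\epsilon,\cdot/\epsilon)$ as you say.

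The genuine gap is in the Neumann step, which is precisely where the hard work lies (and which the $\ws{-1/2}{2}{0}{\dppl}$ estimate of Proposition~4 in \cite{BrBoMi} is supposed to supply). You assert that ``a duality argument against $H^{1/2}$ test functions and rescaling back'' yields $\epsilon^{1+\alpha}$, but you never produce the inequality. Concretely, one must (a) pass from $\tin\in\ws{1}{2}{\alpha}{\Pi}$ to a bound of the form $\|\partial_{y_1}\tin\|_{\ws{-1/2}{2}{0}{E'_l}}\lesssim l^{-\alpha}$ on the vertical trace at $y_1=l$ (e.g.\ via a Fubini/tail argument showing $\|\nabla\tin\|_{L^2(\Pi\cap\{y_1>l\})}\lesssim l^{-\alpha}$ and a trace/interpolation step), and (b) track carefully how the fractional macroscopic norm $H^{-1/2}(\Gout)$ of $\partial_{y_1}\tin(1/\epsilon,\cdot/\epsilon)$ relates to the microscopic $\ws{-1/2}{2}{0}$ norm under the rescaling $x_2=\epsilon y_2$ — the $L^2$-part and the seminorm scale differently, and the extra $\epsilon$ power must come out of this bookkeeping, not be assumed. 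Also note a sign slip: with $\partial_n\uenp=0$ on $\gio$, the boundary term in your weak identity should read $-\int_{\gio}(\partial_n\uiue)\tilde r^\epsilon$, not $+$. Finally, the remark that ``the gradient of $\tin$ decays strictly slower than $\tin$ itself'' is backwards for a harmonic function; the obstruction is rather that Theorem~\ref{BrBoMi} gives no pointwise gradient decay, only the weighted $L^2$ information, which is exactly why the dual-norm route is needed.
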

While in Theorem 5.1 \cite{BrBoMi}, the convergence order is only $\epsilon$, here, 
it is improved because the perturbed profile is only
linear: there are no second order errors in the
sub-layer. The proof follows exactly the same ideas 
as in Theorem 5.1 in \cite{BrBoMi}. The estimates 
for the very weak solutions presented in Theorem 5.2 in \cite{BrBoMi}
are not optimal and this is the main concern of the present
article.

\section{Dirichlet-Poisson problem in a quarter-plane}
\label{vws}
\paragraph{Domains, coordinates and notations}

We define the shifted domain $\ppl:=]l,\infty[\times \RR_+$ and its boundary 
$\dppl:=E'_l\cup B'_l$ where $E'_l:=\{ l \} \times \RR_+$ and $B'_l:=]l,\infty[\times\{ 0\}$.
We denote by $\pplt$ the $\pi/4$ radians rotation of $\ppl$ with respect to  the origin $(0,0)$
and $\dpplt$ the corresponding boundary. $\rrl:=\RR\times ]l',\infty[$ will represent
the straightening of  $\pplt$, and $l':=l/ \sqrt{2}$. The domain $\pplt$ can be parametrised 
as $\pplt =\{ z \in \RR^2 \text{ s.t. }   z_1 \in \RR \text{ and } z_2>l'+|z_1-l'|=:a(z_1) \}$
whereas the change of variables from $\pplt$ towards $\rrl$ is given by $w=\cW(z)$ s.t.
$$
\left\{ 
\begin{aligned}
w_1&=z_1,\\
w_2&=z_2-|z_1-l'|.
\end{aligned}
\right.
$$
Later on we will also need the regularised version of
 domains above 
\begin{equation}\label{affine}
\begin{aligned}
\ppls&:=\left\{ y \in \ppl  \text{ s.t. } \quad y_2 > \frac{s}{y_1-l}\right\}, \\
\pplst& := \left\{ z \in \pplt \text{ s.t. } \quad z_2 > l'+\sqrt{ (z_1-l')^2 +s^2 }=:a_s(z_1) \right\} ,
\end{aligned}
\end{equation}
and the corresponding boundaries are set according to this definition. The mapping straightening $\pplst$ to $\rrl$ is set as $w=\cWs(z)$:
$$
\left\{ 
\begin{aligned}
w_1&=z_1,\\
w_2&=z_2-\sqrt{(z_1-l')^2 +s^2}.
\end{aligned}
\right.
$$

\subsection{Weak solutions}%\subsection{Weak solution in the Weighted context}

We consider the solution of the  problem: find $v$ 
in $\ws{1}{2}{0}{\ppl}$ solving
\begin{equation}
\label{qpt}
\left\{ 
\begin{aligned}
& \Delta v = 0, \quad \text{ in } \ppl, \\
& v = g, \quad \text{ on } \dppl,
\end{aligned}
\right.
\end{equation}
where $g$ is a function belonging to $\ws{1}{2}{\ud}{\dppl}$.
We emphasise here that, as in Chap. 5 \cite{Ne.Book.67},  we require a little 
more regularity on $g$ than the usual fractional norm. As we work
with  weighted Sobolev spaces and we control
the tangential derivatives of the data, the existence and uniqueness results
for weak solutions are nor standard neither so straightforward. As they 
will be used extensively in what follows we provide  a detailed 
presentation.

In order to give a variational formulation of problem \eqref{qpt},
we need to construct lifts of the boundary data in the weighted Sobolev
context. As we apply  changes of variables above, we have to insure the 
compatibility of weights with respect to these mappings. Thus 
we present a detailed adaptation of the results for
the half plane introduced in \cite{Ha.71}.

To solve problem \eqref{qpt} we use the Poincar{\'e}-Wirtinger estimates
because the weighted logarithmic Hardy estimates are not valid
in the specific case where $\alpha + n/p$ is an integer in $\ws{n}{p}{\alpha}{\ppl}$
(see \cite{AmGiGiI.94,AmGiGiII.97} and references therein).

\subsubsection{Weighted Sobolev extensions}

The first change of variables presented above, $z(y)$, is
a rotation of the domain around the origin: it 
preserves the distances to the origin.
Once straightened in $\rrl$ we are exactly in the
position to construct extensions introduced by  Hanouzet in Theorem II.2 of \cite{Ha.71},
so we set:
$$
\left\{ 
\begin{aligned}
\Psi(w)&= \Phi \left( \frac{w_2-l'}{\sqrt{1+w_1^2+(l')^2}}\right), \quad \forall \, w \in \rrl \\
V (w)&= \int_{|t|<1} \hR(t) g(t(w_2-l')+w_1) dt,
\end{aligned}
\right.
$$
where $\Phi$ is a cut-off function such that
$$
{\rm Supp} \Phi \in [0,\uqt[,\quad \Phi(0)=1,\quad \Phi \in C^\infty([0,\uqt]),
$$
and $\hR$ is a regularising kernel i.e. $\hR \in C^\infty_0(]-1:1[)$
and $\int_\R \hR(s) ds=1$.
Our lift then reads:
$$
R(g)(w)=\Psi(w)V(w),\quad \forall \,w \in \rrl.
$$
Then one has:
\begin{lem}\label{redresse}
 For every function $g\in \ws{1}{2}{\ud}{\R}$ one has:

\begin{equation}\label{eq.redresse}
\nrm{R(g)}{\ws{1}{2}{0}{\rrl}} \leq k \nrm{g}{\ws{1}{2}{\ud}{\R\times \{ l'\} }},  
\end{equation}
where the constant $k$ is independent on $l'$.
\end{lem}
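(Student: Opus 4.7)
The plan is to adapt the half-plane lifting construction of Hanouzet (Theorem~II.2 in \cite{Ha.71}) to the shifted strip $\rrl$, while checking that every estimate can be made uniform in the shift $l'$. First I would verify that $R(g)$ is indeed a lift: since $\Phi(0)=1$ and $\int \hR=1$, evaluating at $w_2=l'$ gives $R(g)(w_1,l')=g(w_1)$. The main task is then to bound
$$
\nrm{R(g)}{\ws{1}{2}{0}{\rrl}}^2 = \int_{\rrl} \frac{|\Psi V|^2}{1+\rho^2}\, dw + \int_{\rrl} |\nabla(\Psi V)|^2\, dw.
$$

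Next I would exploit two geometric properties of $\Psi$. Its support lies in the parabolic strip $\{0\le w_2-l' \le \frac{1}{4}\sqrt{1+w_1^2+(l')^2}\}$, on which the fixed-point weight satisfies $1+\rho^2 \sim 1+w_1^2+(l')^2$ with equivalence constants independent of $l'$, and one also has $|\nabla \Psi| \lesssim (1+w_1^2+(l')^2)^{-1/2}$. For the factor $V$, Cauchy--Schwarz against the regularising kernel $\hR$ yields pointwise bounds on $|V|^2$, $|\partial_{w_1} V|^2$ and $|\partial_{w_2} V|^2$ in terms of $t$-averages over $|t|<1$ of $|g|^2$, respectively $|g'|^2$, evaluated at $w_1+t(w_2-l')$. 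Expanding $\nabla(\Psi V)$ by the Leibniz rule, combining these bounds with the geometric properties of $\Psi$, and then applying the change of variable $s=w_1+t(w_2-l')$ followed by Fubini in $w_2$ across the strip, each term in the expanded norm is majorised by
$$
C \int_{\RR} \left( |g(s)|^2 (1+s^2+(l')^2)^{-1/2} + |g'(s)|^2 (1+s^2+(l')^2)^{1/2} \right) ds,
$$
which is exactly $C \nrm{g}{\ws{1}{2}{\ud}{\RR \times \{l'\}}}^2$.

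The main obstacle is keeping the constant $C$ independent of $l'$. This is forced by the precise form of $\Psi$: the argument $(w_2-l')/\sqrt{1+w_1^2+(l')^2}$ of $\Phi$ is dimensionless under the natural rescaling of the problem, so the width of the strip in $w_2$, the gradient bound on $\Psi$, and the Jacobian of the one-dimensional substitution all scale with matching powers of $\sqrt{1+w_1^2+(l')^2}$ that cancel exactly against the weight $1+\rho^2$. Assembling the two pieces of the decomposition then gives \eqref{eq.redresse}.
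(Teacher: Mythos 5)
Your proposal reproduces the paper's argument in substance: Cauchy--Schwarz against the regularising kernel $\hR$, exploitation of the parabolic support of $\Psi$ (on which $1+\rho^2\sim 1+w_1^2+(l')^2$), the Leibniz split of $\nabla(\Psi V)$, and the one-dimensional substitution $s=w_1+t(w_2-l')$ followed by integration in $w_2$. Your combined substitution is exactly the composition of the paper's two changes of variable ($x=(w_2-l')/\sqrt{1+w_1^2+(l')^2}$ followed by $\tilde w_1=tx\sqrt{1+w_1^2+(l')^2}+w_1$), and your closing observation about the scaling that cancels the $l'$-dependence is a nice conceptual gloss on the same mechanism the paper invokes via Lemma~II.2 of Hanouzet.
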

\begin{proof} 
According to the definition of the norm associated to $\ws{1}{2}{0}{\ppl}$, 
$$
\begin{aligned}
\int_{\rrl} \left( \frac{\Psi V}{\rho} \right)^2 dw & \leq k \int_{\rrl} \left(\frac{\Psi}{\rho} \right)^2 \int_{|t|<1} g^2(t (w_2-l')+ w_1) dt dw\\
& \leq k \int_{\R\times [0,\uqt]} \frac{\sqrt{1+w_1^2+(l')^2}}{1+w_1^2+(l')^2 + x^2(1+w_1^2+(l')^2)} \Phi^2(x)\cdot \\
& \quad\quad \cdot  \int_{|t|<1} g^2(t x\sqrt{1+w_1^2+(l')^2}+ w_1) dt dx dw_1  \\
& \leq k \int_\R \frac{g^2}{\sqrt{1+\ti{w}_1^2+(l')^2}} d\tilde{w}_1 \leq k \nrm{g}{\ws{1}{2}{\ud}{\R}},
\end{aligned}
$$
where we used the change of variables $x=(w_2-l')/\sqrt{1+\ti{w}_1^2+(l')^2}$ and a shift 
$\ti{w_1}:=t x\sqrt{1+w_1^2+(l')^2}+ w_1$ in a second step as suggested in Lemma II.2 \cite{Ha.71}.
If $g$ is a  regular function, the gradient is estimated as:
$$
\snrm{\nabla(\Psi V)}{} \leq \frac{ \left| ( \nabla \Psi )V\right|}{\sqrt{1+w_1^2+(l')^2}}  + | \Psi \nabla V | , \quad \forall w \in \rrl \, \text{ s.t. } \, w_2 \leq \uqt \sqrt{1+w_1^2+(l')^2}
$$
So  the first integral proceeds as above, whereas in the second part,
performing the same changes of variables, one gets:
$$
\begin{aligned}
\int_{\rrl} \Psi^2 |\nabla V| dw & \leq k \int_{\rrl} \Psi^2\int_{|t|<1} g'(t (w_2-l') + w_1)(1+t)^2 dt dw \\
&\hspace{-1.4cm} \leq k \int_{\R\times [0,\uqt]} {\sqrt{1+w_1^2+(l')^2}} \Phi^2 (x)%% \cdot \\ & \quad\quad \cdot  
\int_{|t|<1} \left(g' \left(t x \sqrt{1+w_1^2+(l')^2} + w_1 \right) \right)^2dt dx dw_1 \\
& \leq k \int_\R \sqrt{1+\ti{w}_1^2+(l')^2} (g')^2 d\tilde{w}_1 \leq k \nrm{g}{\ws{1}{2}{\ud}{\R}}
\end{aligned}
$$
extending this to $\ws{1}{2}{\ud}{\R}$ functions by density arguments ends the proof.
\end{proof}
In order to use these estimates in $\ppl$ we have to guarantee that they apply also when the domain is a quarter-plane.
\begin{lem}\label{lem.lift.redresse.cont} For every $g\in \ws{1}{2}{\ud}{\dppl}$ there exists a lift denoted by $R(g)$ in $\ws{1}{2}{0}{\ppl}$
such that
$$
\nrm{R(g)}{\ws{1}{2}{0}{\ppl}} \leq k \nrm{g}{\ws{1}{2}{\ud}{\dppl}},
$$
where the constant $k$ is independent on $l$.
\end{lem}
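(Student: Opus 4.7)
The plan is to reduce the quarter-plane lifting to the half-plane result already obtained in Lemma \ref{redresse} by successive changes of variables: the $\pi/4$ rotation $z(y)$ sending $\Pi'_l$ onto $\tilde{\Pi}'_l$, followed by the straightening $\mathcal{W}$ sending $\tilde{\Pi}'_l$ onto the half-plane $R_{l'} = \mathbb{R}\times ]l',\infty[$. Writing $\tilde{g}$ for the transported boundary datum on $\mathbb{R}\times\{l'\}$, I define
\[
R(g)(y) := (R(\tilde{g})\circ \mathcal{W}\circ z)(y), \quad y\in \Pi'_l,
\]
where the inner $R(\tilde g)$ is the Hanouzet-type lift from Lemma \ref{redresse}. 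Since this inner lift produces a function in $\ws{1}{2}{0}{R_{l'}}$ with controlled norm, everything then reduces to showing that the two changes of variables are bi-Lipschitz diffeomorphisms preserving (up to equivalent constants) the weighted Sobolev norms $\ws{1}{2}{0}{\cdot}$ on the interior side and $\ws{1}{2}{\ud}{\cdot}$ on the boundary side.

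First, I record that the rotation $z(y)$ is a linear isometry, so it preserves Lebesgue measure, gradients and the weight $\rho$ (which is the distance to the origin, invariant under rotation). Hence the norms on $\Pi'_l$ and $\tilde{\Pi}'_l$ are literally identical, and the boundary norms on $\partial \Pi'_l$ and $\partial \tilde{\Pi}'_l$ coincide. Next, for the straightening $w=\mathcal{W}(z)$, I observe that $|\mathcal{W}(z)-(0,0)|$ and $|z-(0,0)|$ are comparable uniformly in $z$: indeed $w_1=z_1$ and $w_2 = z_2 - |z_1-l'|$ satisfy $|w_2|\le |z_2|+|z_1|+|l'|$ and, on $\tilde{\Pi}'_l$, $z_2 \ge l'+|z_1-l'|$, so that $w_2\ge l'$ and $|w|\asymp |z|+l'$. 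Since $\mathcal{W}$ has Jacobian $1$ (piecewise) and bounded derivatives, pullback preserves the weighted $L^2$ norms of the gradient and of $v/\rho$ up to multiplicative constants independent of $l$. The analogous argument for the parametrisation of the boundary $\partial\tilde{\Pi}'_l = \{z_2 = a(z_1)\}$ (length element $\sqrt{2}\,dz_1$) gives the equivalence of the boundary norm with the one on $\mathbb{R}\times\{l'\}$, again with an $l$-independent constant.

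Combining these three ingredients, we obtain, for any $g\in \ws{1}{2}{\ud}{\dppl}$,
\[
\nrm{R(g)}{\ws{1}{2}{0}{\Pi'_l}}
\le C_1 \nrm{R(\tilde g)}{\ws{1}{2}{0}{R_{l'}}}
\le C_2 \nrm{\tilde g}{\ws{1}{2}{\ud}{\mathbb{R}\times\{l'\}}}
\le C_3 \nrm{g}{\ws{1}{2}{\ud}{\dppl}},
\]
where the middle inequality is exactly Lemma \ref{redresse} and all constants are independent of $l$. It is immediate from the construction that $R(g)_{|\partial \Pi'_l} = g$, since the Hanouzet lift satisfies this trace property on $\mathbb{R}\times\{l'\}$ and is preserved by the two diffeomorphisms.

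The main technical obstacle is to keep all constants uniform in the shift parameter $l$, in particular to handle the corner point $(l',l')$ of $\tilde{\Pi}'_l$ where the boundary parametrisation $a(z_1)=l'+|z_1-l'|$ is only Lipschitz. This is precisely why we must take care that the comparison $|w|\asymp |z|$ and the Jacobian bound on $\mathcal{W}$ hold away from and across the corner with constants that do not degenerate as $l\to\infty$; this is the content of the computation above and is exactly the compatibility of weights with the changes of variables announced in the paragraph preceding the statement.
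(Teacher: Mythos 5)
Your proposal follows the same strategy as the paper's proof: reduce to the half-plane result of Lemma \ref{redresse} via the $\pi/4$ rotation (a weight-preserving isometry) followed by the piecewise-linear straightening $\cW$, and then check that $\cW$ transports the interior and trace norms with constants independent of $l$. The paper carries out exactly this plan, merely being a bit more explicit at the two spots you argue by hand: for the gradient term it computes the eigenvalues $(3\pm\sqrt{5})/2$ of $\nabla_z\cW^{-1}\nabla_z\cW^{-T}$, and for the trace norm it writes the two-sided weight comparison $1+z_1^2+(l')^2 \leq 1+\rho^2(z) \leq k\,(1+z_1^2+(l')^2)$ on $\dpplt$, which is precisely your $|w|\asymp|z|+l'$ claim.
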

\begin{proof}
As mentioned above the rotation does not change the distances, 
so one has to consider the mapping from $\rrl$ to $\pplt$ (resp. $\RR\times\{l'\}$ to $\dpplt$) from 
both sides of \eqref{eq.redresse}.
The straightening by the continuous piecewise linear transform $w=\cW(z)$ is defined in \eqref{affine}; one has that 
$$
\int_{\rrl}|\nabla_w R(g)| dw = \int_{\pplt} (\nabla_z\cW^{-1} \nabla_z\cW^{-T} \nabla_zR(g), \nabla_zR(g)) \det (\nabla_z \cW) dz.
$$
The eigenvalues of $\nabla_z\cW^{-1} \nabla_z\cW^{-T} $ read $\lambda_\pm=(3\pm\sqrt{5})/2$, they are positive definite 
and independent on $l'$. Thus there exists a constant $k$ such that
$$
\nrm{R(g)}{\ws{1}{2}{0}{\ppl}}\leq k \nrm{R(g)}{\ws{1}{2}{0}{\rrl}}. 
$$
On the other hand, one should focus on the equivalence of trace norms between $\ws{1}{2}{\ud}{\dpplt}$ and $\ws{1}{2}{\ud}{\RR\times \{ l'\}}$:
on $\dpplt$, $1+\rho^2(z)=1+(z_1)^2 + (l'+|z_1-l'|)^2$. This gives the existence of  a constant $k$ independent on $l'$ s.t.
$$
1+z_1^2+(l')^2 \leq  \rho(z) \leq k ( 1 + z_1^2+(l')^2), \quad \forall \, z \in \dpplt
$$
In turn this implies that
$$
\begin{aligned}
&\int_{\dpplt} \left\{ \frac{g^2}{(1+\rho^2(z))^\ud} +  (1+\rho^2(z))^\ud (g')^2 \right\} d\sigma (z) \\
& \quad \quad \sim \int_{\RR\times\{ l'\} }  \left\{ \frac{g^2}{(1+w_1^2+(l')^2)^\ud} +  (1+w_1^2+(l')^2)^\ud (g')^2 \right\} dw_1.
\end{aligned}
$$
\end{proof}

\begin{rmk}
The previous lemma applies with only minor changes to the case of the smooth domain sequence $(\ppls)_{s\in]0,1]}$.  
\end{rmk}
\subsubsection{{\em A priori} estimates}
At this point we are ready to prove the existence and uniqueness of a weak solution
of problem \eqref{qpt}. We denote by $\wso{1}{2}{0}{\ppl}$ the 
subspace of $\ws{1}{2}{0}{\ppl}$ such that:
$$
\wso{1}{2}{0}{\ppl}=\{ v \in \ws{1}{2}{0}{\ppl}\quad \text{ s.t. }\quad v= 0 \text{ on } \dppl\}. 
$$

\begin{proposition}\label{prop.exist.weak}
There exists a unique weak solution $v\in \ws{1}{2}{0}{\ppl}$, of the problem \eqref{qpt}, moreover one has:
$$
\nrm{v}{\ws{1}{2}{0}{\ppl}}\leq k \nrm{g}{\ws{1}{2}{\ud}{\dppl}},
$$
where the constant $k$ is independent of $l$.
\end{proposition}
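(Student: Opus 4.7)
The plan is to reduce to a homogeneous Dirichlet problem via the lift provided by Lemma \ref{lem.lift.redresse.cont}, then apply the Lax--Milgram theorem to the resulting variational formulation on $\wso{1}{2}{0}{\ppl}$, tracking constants carefully so that the final estimate is uniform in $l$.

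First I would set $v=R(g)+w$ with $R(g)$ the lift from Lemma \ref{lem.lift.redresse.cont} and $w\in\wso{1}{2}{0}{\ppl}$ to be determined. The variational formulation is then: find $w\in\wso{1}{2}{0}{\ppl}$ such that
$$
a(w,\varphi):=\int_{\ppl}\nabla w\cdot\nabla\varphi\,dy=-\int_{\ppl}\nabla R(g)\cdot\nabla\varphi\,dy=:L(\varphi),\quad\forall\varphi\in\wso{1}{2}{0}{\ppl}.
$$
Continuity of the right-hand side on $\wso{1}{2}{0}{\ppl}$ follows directly from Cauchy--Schwarz together with the bound $\nrm{R(g)}{\ws{1}{2}{0}{\ppl}}\le k\nrm{g}{\ws{1}{2}{\ud}{\dppl}}$ with $k$ independent of $l$.

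The core step is the coercivity of $a$ on $\wso{1}{2}{0}{\ppl}$, i.e.\ a weighted Poincaré estimate of the form
$$
\int_{\ppl}\frac{|\varphi|^2}{1+\rho^2}\,dy\le k\int_{\ppl}|\nabla\varphi|^2\,dy,\quad\forall\varphi\in\wso{1}{2}{0}{\ppl},
$$
with $k$ independent of $l$. Since $\alpha+n/p=1$ here, the weighted logarithmic Hardy estimate fails, which is precisely why the author flagged Poincaré--Wirtinger as the right tool. Functions in $\wso{1}{2}{0}{\ppl}$ vanish on the entire boundary $\dppl=E'_l\cup B'_l$, so I would establish this by rotating back to $\pplt$, straightening to the half-space $\rrl$ via $\cW$ (whose jacobian has eigenvalues bounded above and below uniformly in $l'$, as already used in Lemma \ref{lem.lift.redresse.cont}), and applying a standard 1D Hardy inequality in the vertical direction from the Dirichlet boundary, followed by a dyadic decomposition in $w_1$ to pick up the weight $1/(1+\rho^2)$. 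Translation invariance along $E'_l$ combined with rescaling independent of the shift parameter $l$ gives the uniformity in $l$; this is the main obstacle, since one must check that neither the trace-space equivalences nor the Hardy constant degenerate as $l\to\infty$.

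With coercivity in hand, Lax--Milgram yields a unique $w\in\wso{1}{2}{0}{\ppl}$ satisfying
$$
\nrm{w}{\ws{1}{2}{0}{\ppl}}\le k\nrm{R(g)}{\ws{1}{2}{0}{\ppl}}\le k\nrm{g}{\ws{1}{2}{\ud}{\dppl}},
$$
with $k$ independent of $l$ by the previous step and Lemma \ref{lem.lift.redresse.cont}. Setting $v=R(g)+w$ gives the announced existence and the stated bound. Uniqueness of the full solution follows from uniqueness of $w$ combined with the fact that any two weak solutions of \eqref{qpt} differ by an element of $\wso{1}{2}{0}{\ppl}$ that is harmonic in $\ppl$, hence is zero by the same Poincaré inequality applied to the difference.
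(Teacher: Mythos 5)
Your proposal is correct and follows essentially the same route as the paper: decompose $v=R(g)+w$ with $R(g)$ the lift from Lemma~\ref{lem.lift.redresse.cont}, pose the variational problem for $w\in\wso{1}{2}{0}{\ppl}$, and apply Lax--Milgram, with coercivity supplied by a weighted Poincar\'e-type estimate whose constant is uniform in $l$. The only substantive difference is that you sketch a proof of that coercivity (Hardy inequality in the straightened half-space after the $\cW$ change of variables), whereas the paper simply invokes ``Poincar\'e--Wirtinger estimates in a quarter-plane''; note that since $1+\rho^2\gtrsim(w_2-l')^2$ on $\rrl$, the one-dimensional Hardy inequality already absorbs the weight and the dyadic decomposition in $w_1$ you mention is not actually needed.
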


\begin{proof}
Using the lift of $g$ given above,  \eqref{qpt} becomes: find $\tv\in \wso{1}{2}{0}{\ppl}$ s.t. $\Delta \tv=-\Delta R(g)$. Testing this equation by $\varphi\in\D(\ppl)$ one has that
$$
(\nabla \tv,\nabla\varphi )=-(\nabla R(g),\nabla \varphi),
$$
by density of $\D(\ppl)$ functions in $\wso{1}{2}{0}{\ppl}$, the r.h.s. is a linear
form on  $\wso{1}{2}{0}{\ppl}$ and the l.h.s. is a bi-linear bi-continuous form on the same
functional space. Thanks to Poincar{\'e}-Wirtinger estimates in a quarter-plane 
the semi-norm is actually equivalent to the $\ws{1}{2}{0}{\ppl}$ norm.
Thus one has the existence and uniqueness by the standard Lax-Milgram theorem. 
Moreover, one has that
$$
\nrm{\tv}{\ws{1}{2}{0}{\ppl}} \leq k \nrm{R(g)}{\ws{1}{2}{0}{\ppl}} \leq k'  \nrm{g}{\ws{1}{2}{\ud}{\dppl}},
$$
where all the constants do not depend on $l$.
Subtracting $R(g)$ one gets the desired result.
\end{proof}

\subsubsection{Regularised problems}
In the rest of the paper, we need a little more 
regularity in order to construct a Green's formula
adapted to the Lipschitz domain $\ppl$ 
(see the very clear and detailed explanations of
{\S} 1.5.3 in \cite{Grisvard}). Thus  in this paragraph, we construct 
regular approximations of problem \eqref{qpt}. This 
is done by approximating $\ppl$ by a sequence $(\ppls)_{s\in [0,1]}$
of $C^\infty$ domains defined above.

For every given function $g\in \ws{1}{2}{\ud}{\dpplt}$, one
sets $g_s \in   \ws{1}{2}{\ud}{\dpplst}$ by writing
$$
g_s(z_1,a_s(z_1)):=g(z_1,a(z_1))=g(z_1), \quad \forall z_1 \in \R
$$
and, where it is not ambiguous, we will drop the $s$ 
and use $g$ instead of $g_s$. 

\begin{lem}\label{approx}
  The approximating sequence of data $(g_s)_{s\in [0,1]}$ is stable
with respect to  the $\ws{1}{2}{\ud}{\dpplt}$ norm:
$$
\nrm{g_s}{\ws{1}{2}{\ud}{\dpplst}} \leq k \nrm{g}{\ws{1}{2}{\ud}{\dpplt}},
$$
whereas for the lifts one has
$$
\nrm{R_s(g_s)}{\ws{1}{2}{0}{\pplst}} \leq k' \nrm{g}{\ws{1}{2}{\ud}{\dpplt}},
$$
and the sequence $(R_s(g_s))_s$ converges to $R(g)$ in the $ \ws{1}{2}{0}{\pplt}$ norm:
$$
\forall \eta \quad \exists \delta > 0 \quad \text{ s.t. } \quad 0<s<\delta \; \implies \;\nrm{R_s(g_s)-R(g)}{ \ws{1}{2}{0}{\pplt}} \leq \eta.
$$
\end{lem}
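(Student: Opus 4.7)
The plan is to establish the three conclusions in order, exploiting the fact that in the straightened variable $w\in\rrl$ the explicit lift formula $\Psi(w)V(w)$ depends only on $l'$ and not on the regularisation parameter $s$.

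\smallskip
\textbf{Stability of $g_s$.} Using $z_1\in\R$ as parameter on $\dpplst$, the defining identity $g_s(z_1,a_s(z_1))=g(z_1)$ gives, in the arc-length coordinate, tangential derivative $g'(z_1)/\sqrt{1+(a_s')^2(z_1)}$ and arc-length element $\sqrt{1+(a_s')^2(z_1)}\,dz_1$. The elementary bound $|a_s'(z_1)|\le 1$, valid uniformly in $s\in[0,1]$ and a.e.\ $z_1$, together with the weight equivalence $1+\rho^2(z_1,a_s(z_1))\sim 1+z_1^2+(l')^2$ obtained exactly as in the last displayed computation of the proof of Lemma~\ref{lem.lift.redresse.cont}, shows that $\nrm{g_s}{\ws{1}{2}{\ud}{\dpplst}}$ collapses, up to constants independent of $s$ and $l$, to the same one-dimensional weighted norm that controls $\nrm{g}{\ws{1}{2}{\ud}{\dpplt}}$.

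\smallskip
\textbf{Stability of the lift.} This is precisely what the remark after Lemma~\ref{lem.lift.redresse.cont} anticipates: repeat the proof of that lemma with $\cW$ replaced by $\cWs$. The eigenvalues of $\nabla_z\cWs^{-1}\nabla_z\cWs^{-T}$ stay positive and bounded uniformly in $s\in[0,1]$ (same $|a_s'|\le 1$ bound), while the pull-back of the weight is uniformly comparable. Chaining this with the previous step gives $\nrm{R_s(g_s)}{\ws{1}{2}{0}{\pplst}}\le k\,\nrm{g}{\ws{1}{2}{\ud}{\dpplt}}$ with $k$ independent of $s$.

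\smallskip
\textbf{Convergence of the lift.} The decisive observation is that $R_s(g_s)\circ\cWs^{-1}$ and $R(g)\circ\cW^{-1}$ coincide with the \emph{same} function $\Psi(w)V(w)$ on $\rrl$. Extending $R_s(g_s)$ by zero outside $\pplst$ and using the inclusion $\pplst\subset\pplt$, decompose
$$
\nrm{R_s(g_s)-R(g)}{\ws{1}{2}{0}{\pplt}}^2 \;=\; I_s + II_s,
$$
where $I_s$ collects the weighted contribution of $R(g)$ over the thin strip $\pplt\setminus\pplst=\{a(z_1)<z_2<a_s(z_1)\}$, and $II_s$ is the weighted norm of the difference over $\pplst$. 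The width $\sqrt{(z_1-l')^2+s^2}-|z_1-l'|$ of the strip is bounded by $s$ and tends pointwise to $0$; since $|R(g)|^2+|\nabla R(g)|^2$ defines a finite weighted measure, Lebesgue's dominated convergence yields $I_s\to 0$. For $II_s$, changing variables to $w$ through $\cWs^{-1}$ on one side and $\cW^{-1}$ on the other reduces both functions to the fixed $\Psi V$; what remains is the difference of two weighted Jacobian measures that converge pointwise as $s\to 0$ and are dominated, uniformly in $s$, by the integrable majorant furnished by step~2. A second application of dominated convergence gives $II_s\to 0$, completing the proof.

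\smallskip
The main obstacle is the third step. The uniform bound from step~2 only guarantees weak convergence up to a subsequence in $\ws{1}{2}{0}{\pplt}$, whereas strong convergence is required. The difficulty is that $R_s(g_s)$ and $R(g)$ live on mismatched domains and are built via distinct straightenings, so no direct continuity argument applies. The splitting into a vanishing-strip contribution $I_s$ and a common-domain contribution $II_s$ expressed in the straightened variable, together with the construction of an $s$-uniform integrable dominant, concentrates all the technical work.
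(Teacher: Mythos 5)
The first two parts of your proposal line up with the paper's proof. The gap is in the third part, and it is substantial: the conclusion \emph{cannot} be reached by dominated convergence in the way you describe.

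The phrase ``changing variables to $w$ through $\cWs^{-1}$ on one side and $\cW^{-1}$ on the other'' is where the argument breaks. In the integral
$$
II_s=\int_{\pplst}\left|\nabla\bigl(R_s(g_s)-R(g)\bigr)(z)\right|^2\,(\text{weight})\,dz,
$$
both functions are evaluated at the \emph{same} physical point $z$; one cannot apply two different diffeomorphisms simultaneously. Pulling back through the single map $w=\cWs(z)$, one gets $R_s(g_s)=\Psi(w)V(w)$ but $R(g)=\Psi(\tw)V(\tw)$ where $\tw=(w_1,\,w_2+\oms(w_1))$ with $\oms(w_1)=\sqrt{s^2+w_1^2}-|w_1|$. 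The resulting integrand therefore contains, besides the genuine Jacobian-and-weight discrepancy you identify, the term
$$
\Psi(w)\,\nabla\bigl(V(\tw)-V(w)\bigr),
$$
and $\nabla V$ involves $g'$, which is only a weighted $L^2$ function. The difference of a weighted $L^2$ function at $w$ and at the $w$-dependent shift $\tw$ does \emph{not} converge pointwise a.e., so there is no integrable dominant in the pointwise sense and dominated convergence is unavailable. What is actually needed is the strong $L^2$-continuity of the translation operator (Nečas, Thm.~2.1.1), and it must be extended to a shift $\oms(w_1)$ that itself depends on the integration variable and to a weighted measure (a version of Hanouzet's Lemma~II.2 for general powers of the weight). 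This is exactly the machinery the paper deploys after decomposing $\nabla_w(R_s-R)$ into four pieces $I_1,\dots,I_4$ and isolating $I_4=\Psi(w)\nabla(V(\tw)-V(w))$ as the one term where no derivative of $\Psi$ or bounded Jacobian factor remains to absorb the shift. Your self-assessment correctly flags the third step as the hard part, but the decomposition into $I_s$ (strip) and $II_s$ (common domain) does not by itself resolve it: without the translation-continuity argument the proof is incomplete.

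Minor point: the paper extends $R_s(g_s)$ over $\pplt\setminus\pplst$ by $g$ rather than by zero; extending by zero would create a jump on $\dpplst$ and destroy membership in $\ws{1}{2}{0}{\pplt}$, so that choice must also be corrected.
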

\begin{proof}
For $s$ small enough, one has:
$$
a(z_1)\leq a_s(z_1)\leq 2a(z_1),\quad \forall z_1 \in \RR.
$$
Similarly it is easy to show that $|a'_s|\leq |a'|$. This gives the first claim. 
The proof of the second claim is identical the proof of Lemma \ref{lem.lift.redresse.cont}.
We extend $R_s(g_s)$ in $\pplt \setminus \pplst$ by $g$. It is easy to 
prove that there exists a constant $k$ s.t. 
$$
\nrm{R_s(g_s)}{\ws{1}{2}{0}{\pplt}}\leq k \nrm{g}{\ws{1}{2}{\ud}{\dpplt}}.
$$
Up to a subsequence, the compact imbedding of $\ws{1}{2}{0}{\pplt}$ into $\ws{0}{2}{-1}{\pplt}$
implies strong convergence in the latter norm. The main focus is the semi-norm 
convergence. As seen in the lemma above the norms are consistent when passing
from $\pplt$ to $\rrl$, and the same holds when passing from $\pplst$ to 
$\rrl$ for the same reason. Making the change of variables $\cWs$, one
can re-express both lifts in $\rrl$ as:
$$
\left\{ 
\begin{aligned}
R_s(g)&=\Phi\left(  \frac{w_2-l'}{\sqrt{1+w_1^2+(l')^2}}\right) V(w), \\
R(g) &=\Phi\left(  \frac{w_2+\oms(w_1)-l'}{\sqrt{1+w_1^2+(l')^2}} \right) V(w_1,w_2+\oms(w_1)), \\
\end{aligned}
\right.
$$
where $\oms(w_1)=\sqrt{s^2+w_1^2}-|w_1|$. For the rest of the proof we set 
$\tw=(w_1,w_2+\oms(w_1))$. We decompose $I:= \nabla_w(R_s(g)-R(g))$ in four pieces:
$$
\begin{aligned}
I:= \sum_{j=1}^4I_j := & \nabla_w( \Psi(\tw) - \Psi (w))V(\tw) + \nabla\Psi(w) (V(\tw)-V(w)) \\
	& + (\Psi(\tw)-\Psi(w))\nabla V(\tw) + \Psi(w)\nabla(V(\tw)-V(w)).   
\end{aligned}
$$
The first three terms can easily be estimated by $s^\gamma \nrm{g}{\ws{1}{2}{\ud}{\RR}}$
where $\gamma$ is some positive constant, thanks to techniques  used in \cite{Ha.71}
 for fractional trace spaces. In our case those terms are even easier to 
treat because no fractional norm has to be used. The term $I_4$ is more delicate
because there is no derivative left, so we have to  use the continuity of weighted translation
operators; here we give  the sketch of the proof:
$$
\begin{aligned}
I_4 & \leq 2 \left| \Psi(w) \int_{|t|<1} \hR(t) \left\{g'(t\tw_2 +w_1)-g'(tw_2+w_1)\right\}dt \right| \\
& \quad \quad + \left| \Psi(w) \int_{|t|<1} \hR(t) {g'(t\tw_2 +w_1) t \oms'(w_1)} dt \right| %% \\
 =: I_{4,1}+I_{4,2},
\end{aligned}
$$
the term $I_{4,2}$ is estimated again as the other terms, we focus on $I_{4,1}$
$$
\begin{aligned}
J:=\int_{\rrl}I_4^2 dw \leq 2 k \int_{\RR\times[0,\uqt]} \Phi^2 (x) & \left\{ g'\left(tx\sqrt{ 1 + w_1^2 + (l')^2} + \oms(w_1) + w_1\right) \right.\\
&\quad  \left.-g'\left(tx \sqrt{ 1 + w_1^2 + (l')^2} + w_1\right) \right\}^2 dt,
\end{aligned}
$$
where, as before, we set $x:= (w_2-l')/\sqrt{ 1 + w_1^2 + (l')^2}$. Then we use a
version of Lemma II.2. in \cite{Ha.71} extended to all types of powers of the integration weight to conclude 
that:
$$
J \leq k\int  \sqrt{ 1 + \tw_1^2 + (l')^2} \left\{ g'( \tw_1 + tz \tilde{\omega}(t,z,\tw_1))- g'( \tw_1 ) \right\}^2 dt dz d\tw_1,
$$
where $\ti{\omega}(t,z,\tw_1)=\omega(w_1(t,z,\tw_1))$.
At this point, we can follow the proof of Theorem 2.1.1 in \cite{Ne.Book.67} that 
states the continuity of the translation operator in $L^p$.
The only difference is that the term $ tz \tilde{\omega}(t,z,\tw_1))$ itself depends on the 
integration variables. This dependence problem is overcame by noting that
$$
\forall \eta > 0,\, \exists \delta > 0\,  \text{ s.t. }\, |tz  \tilde{\omega}(t,z,\tw_1))| \leq \frac{s}{4} < \delta,\quad \forall (t,z,\tw_1) \in ]-1,1[\times\left[0,\uqt\right[\times\RR,
$$
this in turn implies that on the set of continuity points of $g'$, one has
$$
\left|g'(\tw_1 + tz \tilde{\omega}(t,z,\tw_1))- g'( \tw_1 )\right| < \frac{\eta}{3},
$$
and the rest follows exactly as in Theorem 2.1.1 in \cite{Ne.Book.67}.
\end{proof}
\subsubsection{Convergence}
Now we state the existence and uniqueness of the regularised problem: find $\vs \in \ws{1}{2}{0}{\dppls}$ s.t.  
\begin{equation}\label{pqt.reg}
\left\{ 
\begin{aligned}
& \Delta \vs = 0, \text{ in } \ppls,\\  
& \vs = g, \text{ on } \dppls.
\end{aligned}  
\right.
\end{equation}
\begin{proposition}\label{prop.conv.sol.reg}
For every fixed $s\in[0,1]$   there exists a
unique weak solution $\vs \in \ws{1}{2}{0}{\ppls}$ of problem \eqref{pqt.reg}, satisfying
$$
\forall \eta>0 \quad  \exists \delta>0 \quad \text{ s.t. } \quad s<\delta \Longrightarrow \nrm{\vs -v }{\ws{1}{2}{0}{\ppl}} < \eta  ,
$$
where $v_s$ is extended by $g$ in $\ppl \setminus \ppls$.
\end{proposition}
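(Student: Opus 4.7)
The plan is to treat Proposition~\ref{prop.conv.sol.reg} in two stages: existence and uniqueness of $\vs$ for fixed $s\in(0,1]$, and then strong convergence of $\vs$ to $v$ as $s\to 0$. For the first stage, I would repeat verbatim the Lax--Milgram argument of Proposition~\ref{prop.exist.weak} on the smooth domain $\ppls$, using the regularised lift $R_s(g_s)$ supplied by Lemma~\ref{approx}. Setting $\tilde v_s := \vs - R_s(g_s) \in \wso{1}{2}{0}{\ppls}$, the variational problem
$$(\nabla \tilde v_s, \nabla \varphi) = -(\nabla R_s(g_s), \nabla \varphi), \quad \forall\, \varphi\in \wso{1}{2}{0}{\ppls},$$
is well-posed provided Poincar\'e--Wirtinger holds on $\ppls$ with a constant uniform in $s\in[0,1]$, which is the case because the angular aperture at infinity of $\ppls$ coincides with that of $\ppl$. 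Combined with the uniform bound on $R_s(g_s)$ from Lemma~\ref{approx}, this produces $\nrm{\vs}{\ws{1}{2}{0}{\ppls}}\leq k\nrm{g}{\ws{1}{2}{\ud}{\dppl}}$ with $k$ independent of $s$.

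For the convergence, I would extend $\tilde v_s$ by zero to $\ppl\setminus\ppls$; since $\tilde v_s$ vanishes on $\dppls$ and the extension is zero in a neighbourhood of $\dppl$, the extended function lies in $\wso{1}{2}{0}{\ppl}$ and remains uniformly bounded. Banach--Alaoglu extracts a weakly convergent subsequence $\tilde v_s \rightharpoonup \tilde v^\ast$ in $\wso{1}{2}{0}{\ppl}$. For any $\varphi\in\cD(\ppl)$, the exhaustion $\ppls \nearrow \ppl$ places $\mathrm{supp}(\varphi)\subset \ppls$ for $s$ small enough, so the variational identity for $\tilde v_s$ passes to the limit using the strong lift convergence $R_s(g_s)\to R(g)$ of Lemma~\ref{approx}. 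The uniqueness part of Proposition~\ref{prop.exist.weak} then identifies $\tilde v^\ast = \tilde v := v - R(g)$, so the whole sequence (not just a subsequence) converges weakly.

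To upgrade weak to strong convergence, I expand
$$\nrm{\nabla(\tilde v_s - \tilde v)}{L^2(\ppl)}^2 = \nrm{\nabla \tilde v_s}{L^2(\ppl)}^2 - 2(\nabla \tilde v_s, \nabla \tilde v) + \nrm{\nabla \tilde v}{L^2(\ppl)}^2,$$
and substitute the three variational identities: $\nrm{\nabla \tilde v_s}{L^2}^2 = -(\nabla R_s(g_s), \nabla \tilde v_s)$, then $(\nabla \tilde v_s, \nabla \tilde v) = -(\nabla R(g), \nabla \tilde v_s)$ (testing the equation for $\tilde v$ against the extended $\tilde v_s$, legitimate since it lies in $\wso{1}{2}{0}{\ppl}$), and $\nrm{\nabla \tilde v}{L^2}^2 = -(\nabla R(g), \nabla \tilde v)$. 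A short rearrangement yields
$$\nrm{\nabla(\tilde v_s - \tilde v)}{L^2(\ppl)}^2 = (\nabla(R(g) - R_s(g_s)), \nabla \tilde v_s) + (\nabla R(g), \nabla(\tilde v_s - \tilde v)).$$
The first term is handled by Cauchy--Schwarz combined with the uniform bound on $\tilde v_s$ and the strong lift convergence of Lemma~\ref{approx}; the second vanishes by weak convergence of $\tilde v_s$ to $\tilde v$ tested against the fixed element $\nabla R(g)$. Reinserting $R_s(g_s)\to R(g)$ then produces the announced convergence $\vs\to v$ in $\ws{1}{2}{0}{\ppl}$.

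The main obstacle I anticipate is the bookkeeping across the two nested domains $\ppls\subset\ppl$: one must check that extension by zero is compatible with the weighted $\ws{1}{2}{0}$ structure (immediate since the weight depends only on distance to a fixed reference point outside both domains), and that Poincar\'e--Wirtinger constants on $\ppls$ do not degenerate as $s\to 0$. The latter is secured by the fact that $\ppls$ stays a Lipschitz domain with an angular aperture bounded away from zero uniformly in $s$, so all constants inherited from $\ppl$ persist under the regularisation.
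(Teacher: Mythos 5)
Your proposal is correct and follows the same route the paper takes, which is stated there in a single sentence: apply Proposition~\ref{prop.exist.weak} on $\ppls$ for well-posedness and Lemma~\ref{approx} for the convergence of the lifts. Your additional bookkeeping --- the uniform Poincar\'e--Wirtinger constant on $\ppls$, the extension-by-zero argument into $\wso{1}{2}{0}{\ppl}$, and in particular the energy-identity upgrade from weak to strong convergence of $\nabla \tilde v_s$ --- simply fills in the details the paper's one-line proof leaves implicit.
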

The proof is a straightforward consequence of Proposition \ref{prop.exist.weak} applied to the regular domain $\ppls$ and of Lemma \ref{approx} for the convergence part.

If we now restrict ourselves to the case where $g\in {\cal E}(\ppl)$, this latter space being dense in $\ws{1}{2}{\ud}{\ppl}$ (see for instance Theorem I.1 in \cite{Ha.71}), we get more regularity, namely:
\begin{lemma}\label{lem.reg}
If $g\in {\cal E}(\dppl)$ then $\vs$, the unique solution of problem \eqref{pqt.reg}, belongs to
$H^2_{\loc}(\ov{\ppls})$ for every fixed $s\in ]0,1]$.
\end{lemma}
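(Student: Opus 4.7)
The plan is to reduce to a standard elliptic $H^2$-regularity statement on bounded smooth subdomains and then piece the local information together. Fix $s\in]0,1]$. By construction in \eqref{affine} the boundary of $\Pi'_{l,s}$ is the graph of the $C^\infty$ function $a_s$ (after the rotation), so $\Pi'_{l,s}$ is a domain with a $C^\infty$ boundary. By hypothesis $g\in \mathcal{E}(\partial \Pi'_l)$, which after the identification $g_s(z_1,a_s(z_1))=g(z_1,a(z_1))$ produces a smooth Dirichlet datum on $\partial \Pi'_{l,s}$.

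First I would fix an arbitrary relatively compact open set $U \Subset \overline{\Pi'_{l,s}}$ and a smooth cut-off function $\chi\in C^\infty_0(\mathbb{R}^2)$ with $\chi\equiv 1$ on $U$ and support in a slightly larger bounded set $U'$. Let $\Omega_U := \Pi'_{l,s}\cap U'$; then $\Omega_U$ is a bounded domain whose boundary is $C^\infty$ except possibly where $\partial U'$ meets $\overline{\Pi'_{l,s}}$, but the localisation means we only care about regularity near the portion of $\partial \Omega_U$ lying on $\partial \Pi'_{l,s}$. Writing $v_s = \tilde v_s + R_s(g_s)$ with $\tilde v_s \in \overset{\circ}{W}{}^{1,2}_0(\Pi'_{l,s})$ as in the proof of Proposition \ref{prop.exist.weak}, and noting that $R_s(g_s)$ is smooth in $\overline{\Pi'_{l,s}}$ whenever $g$ is smooth (the lift involves a $C^\infty$ cut-off and a convolution of a smooth function with a smooth kernel), the function $v_s$ is a weak solution of the homogeneous Dirichlet problem
\begin{equation*}
-\Delta v_s = 0 \text{ in } \Omega_U,\qquad v_s = g \text{ on } \partial\Omega_U \cap \partial \Pi'_{l,s},
\end{equation*}
with $v_s\in H^1(\Omega_U)$ and smooth boundary data on the smooth part of $\partial\Omega_U$.

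Next I would invoke the classical $H^2$ elliptic regularity theorem for the Laplacian on $C^2$ domains (e.g.\ Theorem 2.4.2.5 in Grisvard \cite{Grisvard}, or the standard Agmon--Douglis--Nirenberg result): since $-\Delta$ is uniformly elliptic, the right-hand side is in $L^2(\Omega_U)$, the boundary is $C^\infty$ locally around $U\cap\partial\Pi'_{l,s}$, and the Dirichlet datum is smooth there, one obtains $v_s \in H^2(U)$ together with the estimate
\begin{equation*}
\|v_s\|_{H^2(U)} \le C\bigl(\|v_s\|_{H^1(\Omega_U)} + \|g\|_{H^{3/2}(\partial\Omega_U\cap \partial\Pi'_{l,s})}\bigr).
\end{equation*}
As $U$ was arbitrary in $\overline{\Pi'_{l,s}}$, this yields $v_s \in H^2_{\mathrm{loc}}(\overline{\Pi'_{l,s}})$.

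The main obstacle I anticipate is not the regularity argument itself, which is entirely standard once the domain is known to be $C^\infty$, but rather handling the unboundedness cleanly: without the localisation step one cannot hope for a global $H^2$ bound in the weighted setting (the weight $\rho$ grows at infinity), so the whole point of stating the conclusion with $H^2_{\mathrm{loc}}$ is to strip away the weight issue and let classical interior/boundary regularity do its job on bounded pieces. Everything else—the smoothness of $\partial\Pi'_{l,s}$ coming from \eqref{affine}, the smoothness of the lift $R_s(g_s)$ when $g$ is smooth, and the a priori bound on $v_s$ in $W^{1,2}_0(\Pi'_{l,s})$—is already in place from the previous lemmas and propositions.
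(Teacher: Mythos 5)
Your argument is correct and follows essentially the same route as the paper: decompose $v_s = \tilde v_s + R_s(g_s)$, observe that the lift is smooth when $g$ is, then apply standard interior regularity followed by boundary regularity up to the $C^\infty$ boundary $\partial\ppls$, localizing to strip away the weight at infinity. The only cosmetic difference is that you invoke the regularity theorem directly for the harmonic function $v_s$ with smooth Dirichlet data, whereas the paper phrases it for $\tilde v_s \in \wso{1}{2}{0}{\ppls}$ solving the Poisson problem $\Delta\tilde v_s = -\Delta R_s(g_s)$ with zero boundary data; these are equivalent.
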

\begin{proof}
It is easy to show that if $g\in {\cal E}(\dppl)$, then $R_s(g) \in  {\cal E}(\ppl)$ and
thus by standard interior regularity results one gets that $\tvs:=\vs - R_s(g)$, which belongs
to $\wso{1}{2}{0}{\ppls}$, is actually in $H^2_\loc(\ppls)$ \cite{Evans.Book,GiTru.Book,Ne.Book.67}. 
Moreover because of the Dirichlet
condition and the $C^\infty$ regularity of the boundary, the regularity of $\tvs$ can be extended up to the boundary
by the same method. 
\end{proof}
\subsection{Weighted Rellich estimates}
Above, we constructed  the tools necessary to adapt the
very weak solutions presented in Chap. 5 in \cite{Ne.Book.67}, to the weighted context.

\begin{proposition}\label{rellich}
Let $s\in]0,1]$ be fixed. If $g\in  {\cal E}(\ppls)$ then $\vs \in H^2_\loc(\ppls)$
and one has  moreover
$$
\nrm{\ddn{\vs}}{\ws{0}{2}{\ud}{\dppls}} \leq k \nrm{g}{\ws{1}{2}{\ud}{\dppls}}.
$$
The operator $T$ defined from ${\cal E}(\ppls)$ as 
$T(g)=\ddn{\vs}$ is extended by continuity into a mapping 
from $\ws{1}{2}{\ud}{\dppls}$ on $\ws{0}{2}{\ud}{\dppls}$
and one has that 
$$
\nrm{T(g)}{\ws{0}{2}{\ud}{\dppls}} \leq k \nrm{g}{\ws{1}{2}{\ud}{\dppls}},
$$
where the constants $k$ do not depend on $l$.  
\end{proposition}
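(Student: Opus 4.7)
The plan is to establish the estimate first for $g$ in the dense subspace $\mathcal{E}(\dppls)$ through a weighted Rellich--Pohozaev identity, and then to extend the operator $T$ by continuity. Lemma \ref{lem.reg} provides $\vs \in H^2_{\loc}(\overline{\ppls})$ for such data, so that the integrations by parts below are justified on any compact subset of $\overline{\ppls}$.

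The central ingredient is a smooth multiplier $h : \overline{\ppls} \to \RR^2$ satisfying, uniformly in $l$ and in $s \in ]0,1]$,
\begin{equation*}
h \cdot n = (1+\rho^2)^{1/2} \text{ on } \dppls, \qquad |h(y)| \leq k (1+\rho^2(y))^{1/2}, \qquad |\nabla h(y)| \leq k \text{ in } \ppls.
\end{equation*}
Such an $h$ can be taken in the form $h = (1+\rho^2)^{1/2}\, \tilde n$, with $\tilde n$ a smooth extension into $\overline{\ppls}$ of the outer unit normal on $\dppls$, chosen so that $|\nabla \tilde n| \lesssim (1+\rho^2)^{-1/2}$; the smoothness of the regularised boundary and the fact that $\rho$ is anchored at a fixed point (independent of $l$) ensure that the constants in the three properties do not degenerate with $l$.

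Multiplying $-\Delta \vs = 0$ by $2(h \cdot \nabla \vs)$, integrating over $\ppls \cap B_R$, and decomposing $\nabla \vs = (\partial_n \vs) n + (\partial_\tau g) \tau$ on $\dppls$ produces, after standard rearrangement,
\begin{align*}
\int_{\dppls \cap B_R} (h \cdot n) (\partial_n \vs)^2 \, d\sigma
&= \int_{\dppls \cap B_R} \bigl[(h \cdot n)(\partial_\tau g)^2 - 2(h \cdot \tau)(\partial_\tau g)(\partial_n \vs)\bigr] d\sigma \\
&\quad - \int_{\ppls \cap B_R} \bigl[(\mathrm{div}\, h)|\nabla \vs|^2 - 2\nabla h : \nabla \vs \otimes \nabla \vs\bigr] dy + \mathcal{R}_R,
\end{align*}
where $\mathcal{R}_R$ gathers the arc contribution on $\ppls \cap \partial B_R$. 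Since $\int_{\ppls} |\nabla \vs|^2 \, dy < \infty$ by Proposition \ref{prop.exist.weak} and $|h| \lesssim R$ on $\partial B_R$, one selects a subsequence $R_n \to \infty$ along which $\mathcal{R}_{R_n} \to 0$ by a standard averaging argument on the radial variable.

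Using the first property of $h$ on the left-hand side, Young's inequality on the cross term (with the pointwise bound $|h \cdot \tau|^2/(h \cdot n) \leq k (1+\rho^2)^{1/2}$ furnished by $|h| \lesssim (1+\rho^2)^{1/2}$) to absorb a small multiple of the left-hand side, and the uniform bounds on $\mathrm{div}\, h$ and $\nabla h$ to control the interior term by $\int |\nabla \vs|^2 \leq k \nrm{g}{\ws{1}{2}{\ud}{\dppls}}^2$ (again Proposition \ref{prop.exist.weak}), one obtains
\begin{equation*}
\nrm{\partial_n \vs}{\ws{0}{2}{\ud}{\dppls}}^2 \leq k \nrm{g}{\ws{1}{2}{\ud}{\dppls}}^2,
\end{equation*}
with $k$ independent of $l$. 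The density of $\mathcal{E}(\dppls)$ in $\ws{1}{2}{\ud}{\dppls}$ together with the linearity of $g \mapsto \vs \mapsto \partial_n \vs$ then yields the claimed continuous extension of $T$ to the whole trace space.

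The principal difficulty is the construction of the multiplier: the prescribed weight $(1+\rho^2)^{1/2}$ on $h \cdot n$ must be matched by a uniformly bounded $\nabla h$, otherwise the interior Rellich terms would exceed the only interior control available, namely $\int |\nabla \vs|^2$. The $l$-independence of the constants then forces one to build $h$ from the local geometry of $\dppls$, which is translation-invariant up to the shift encoded by $\rho$.
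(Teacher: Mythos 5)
Your overall strategy — a weighted Rellich identity with a multiplier scaling like $\rho$, truncation by balls, and an averaging argument to kill the arc terms — is the right kind of argument, and the formal manipulation is correct. But your specific choice of multiplier, $h=(1+\rho^2)^{1/2}\,\tilde n$ with $\tilde n$ a smooth extension of the unit normal, does not satisfy the $l$-uniform bound $|\nabla h|\leq k$ that you rely on to control the interior Rellich terms, and this is a genuine gap. On the boundary, the tangential derivative of $\tilde n$ is forced to equal the curvature of $\dppls$, which near the smoothed corner at $(l,0)$ is of order depending only on $s$ (the regularisation scale), not on $l$. But the weight $(1+\rho^2)^{1/2}$ near that corner is $\sim l$, since $\rho$ is anchored at the origin which is independent of $l$. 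Consequently the term $(1+\rho^2)^{1/2}\nabla\tilde n$ grows linearly in $l$ in a neighbourhood of the corner, and no choice of smooth extension of the normal can repair this: matching the boundary value already forces the tangential derivative. Your own constraint $|\nabla\tilde n|\lesssim (1+\rho^2)^{-1/2}$ is thus unreachable for large $l$ at fixed $s$, and the entire interior estimate, which is the point of the construction, collapses.

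The paper sidesteps this precisely by \emph{not} taking the multiplier parallel to the normal. After rotating the domain by $\pi/4$ so that $\dpplst$ becomes a graph $z_2=a_s(z_1)$ with $|a_s'|<1$, it takes $h^r=(0,-\varphi_r\rho)$ (summed over a dyadic partition of unity $\varphi_r$). This is a fixed-direction field, so $|\nabla h|$ is governed purely by $\nabla\rho$ (bounded by $1$) and by the cut-offs, and the cut-offs are constructed so that after summing in $r$ they cancel from the interior terms. The coercivity on the boundary term, $(\nu,h)\geq k\varphi_r\rho$, follows from the slope bound $|a_s'|<1$ rather than from alignment with the normal. In short, the paper trades "multiplier = weight $\times$ normal" for "multiplier = weight $\times$ fixed direction $+$ slope estimate," which is what actually makes the constants $l$-independent. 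You should replace your $\tilde n$-based multiplier by such a fixed-direction field and derive the lower bound $(\nu,h)\gtrsim\rho$ from the geometry of the rotated domain instead.
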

\begin{proof}
We rotate again $\ppls$ by $\pi/4$ radians to switch to the chart $(z_1,z_2)$:
  the boundary of $\dpplst$ is expressed as $(z_1,a_s(z_1))$. We set the partition
of unity, 
$$
\sum_{r=0}^{+\infty}\varphi_r(z)=1,  \quad \forall z \in \ppls, 
$$
the functions $\varphi_r$ being defined as:
\begin{equation}\label{cutof}
\left\{  
\begin{aligned} 
&\psi_r:= e^{-\left(\frac{\rho-2^{r}}{\rho-2^{r-1}}\right)^2} \chiu{[2^{r-1},2^r]}(\rho) 
%+\\
%&
%\quad \quad \quad \quad \quad \quad   
+\chiu{[2^r,2^{r+1}]}(\rho) +e^{-\left(\frac{\rho-2^{r+1}}{\rho-2^{r+2}}\right)^2} \chiu{[2^{r+1},2^{r+2}]}(\rho)   , \, r\geq 1, \\
&\psi_0 :=  \chiu{[0,2]}(\rho) +e^{-\left(\frac{\rho-2}{\rho-4}\right)^2} \chiu{[2,4]}(\rho),\\
&\varphi_r:= \frac{\psi_r}{\sum\limits_{j=r-1}^{r+1} \psi_j},\quad \forall r\geq1,\quad \varphi_0 := \frac{\psi_0 }{\psi_0+\psi_1},
\end{aligned}  
\right. 
\end{equation}
where by $\chiu{S}$ we denote the characteristic function of a given set $S$.
Then we define
$$
h^r := (0,-\varphi_r(z) \rho(z)),\quad \text{and } h:=\sum_{r=0}^{+\infty} h^r.
$$
Thanks to Proposition \ref{lem.reg}, $\vs \in H^2_\loc(\ov{\pplst})$; one is allowed  to set locally the Rellich formula (\cite{Ne.Book.67}, p. 245). When adapted to the Laplace operator, it reads:
$$
\begin{aligned}
&   \int_{\dpplst} ((h^r\cdot \nu)\, \id - ( h \otimes \nu + \nu \otimes h ) \nabla\vs, \nabla \vs)\, d\sigma (z) \\
& = \int_{\pplst}^{}(\dive h^r \, \id -(\nabla h^r + (\nabla h^r)^T) \nabla \vs, \nabla \vs ) \,dz% \\
+ \int_{\pplst}^{}(h^r\cdot \nabla \vs) \Delta \vs \,dz.
\end{aligned}
$$
We have that 
\begin{equation}\label{est.below}
 (\nu,h^r)=\varphi_r(z)(1+(a_s')^2)^{-\ud} \rho(z) \geq k \varphi_r \rho(z)   
\end{equation}
because $|a_s'|<1$. 
Developing the boundary term in normal ($\nu$) and tangent ($\tau$) 
directions, one has that, in fact,
$$
\begin{aligned}
&  \int_{\dppls}^{}(h^r\cdot\nu)(\partial_\nu \vs)^2 + 2(h^r\cdot\tau) \partial_\tau\vs \partial_\nu \vs  - (h^r\cdot\nu)(\partial_\nu \vs)^2  \,d\sigma(z) \\
& =- \int_{\pplst}^{}(\dive h^r \, \id -(\nabla h^r + (\nabla h^r)^T) \nabla \vs, \nabla \vs ) \,dz. 
\end{aligned}
$$
The first term in the r.h.s. above is estimated from below thanks to \eqref{est.below}, the second and the third ones by their absolute value, giving:

$$
\begin{aligned}
&  \int_{\dppls}^{}\varphi_r(z)\rho(z) (\partial_\nu \vs)^2  \,d\sigma(z) \leq %%\\
%%& 
\int_{\dppls}^{}2\varphi_r\rho(z)( |\partial_\tau\vs| |\partial_\nu \vs|  + (\partial_\nu \vs)^2 )  \,d\sigma(z) \\
&\quad \quad  \quad \quad   - \int_{\pplst}^{}(\left\{ \dive h^r \id -(\nabla h^r + (\nabla h^r)^T) \right\} \nabla \vs, \nabla \vs ) \,dz. 
\end{aligned}
$$
Then we sum with respect to  $r$ and
apply the Beppo-Levi theorem for the boundary terms.
For the interior r.h.s. above, due to the specific choice of cut-of function
in \eqref{cutof}, one can pass to the limit with respect to  the summation index $r$
applying the Lebesgues theorem.
These justifications allow us to write
$$
\begin{aligned}
\int_{\pplst}^{}\rho(z)(\partial_\nu \vs)^2 \,d\sigma& (z) \leq   \int_{\dpplst}^{}2\rho(z)  \left( | \partial_\nu\vs  | | \partial_\tau \vs | + (\partial_\nu\vs)^2  \right)  \,d\sigma (z) \\
& - \int_{\ppls} \left(\left\{ \dive\begin{pmatrix} 0 \\ \rho \end{pmatrix} \id - (\nabla+\nabla^T)\begin{pmatrix} 0 \\ \rho \end{pmatrix}  \right\} \nabla \vs , \nabla \vs \right) dz,
\end{aligned}
$$
note that it is important here that the derivatives of $h$ contain
only $\rho$ but no cut-of function, this explains why we don't
estimate the interior terms before summing over $r$. 
By Cauchy-Schwartz one gets that
$$
\nrm{\ddn{\vs}}{\ws{0}{2}{\ud}{\dpplst}}^2 \leq k \left\{ \nrm{g}{\ws{1}{2}{\ud}{\dpplst}}\nrm{\ddn{\vs}}{\ws{0}{2}{\ud}{\dpplst}} + \nrm{\vs}{\ws{1}{2}{0}{\pplst}} \right\} 
$$
Thanks to the Young inequality and Proposition \ref{prop.exist.weak}, one obtains the first estimate of the claim.
Because the r.h.s. only depends on the $\ws{1}{2}{\ud}{\dppls}$-norm,  the result can be extended to every function
$g$ belonging to $\ws{1}{2}{\ud}{\dppls}$ by density and continuity.
\end{proof}

\begin{rmk}
Note that this result holds in fact for any polynomial of
$\rho$ and what follows could be extended as well to 
any weighted Sobolev space. One only needs  to choose the
proper scaling for the cut-off functions $\varphi_r$ with respect to the  weight.
\end{rmk}

\begin{proposition}\label{convergence}
If $(\vs)_{s\in[0,1]}$ is a sequence of solutions of problems
\eqref{pqt.reg}, then the next properties hold:
\begin{itemize}
\item[(i)] There exists a constant $k$ dependent neither on $s$ nor on $l$ such that
$$
\nrm{\ddn{\vs}}{\ws{0}{2}{\ud}{\dppls}} \leq k \nrm{g}{\ws{1}{2}{\ud}{\dppl}}, 
$$
\item[(ii)] there exists $\varpi\in \ws{0}{2}{\ud}{\dppls}$  s.t. 
$$
\ddn{\vs}\rightharpoonup \varpi,\quad \text{ in } \ws{0}{2}{\ud}{\dppls},
$$
\item[(iii)] for every function $u\in \ws{1}{2}{0}{\ppl}$ one has at the limit $s\equiv0$, the  Green's formula:
$$
\int_{\ppl}^{}\nabla v\cdot \nabla u \,dy = \int_{\dppl}^{}\varpi u\,d\sigma (y),
$$
where $v$ is the solution of problem \eqref{qpt}.
\end{itemize}
\end{proposition}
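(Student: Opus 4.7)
The plan is to chain together Proposition \ref{rellich}, Lemma \ref{approx}, and Proposition \ref{prop.conv.sol.reg}, then pass to the limit in a Green's identity that already holds at the regularised level thanks to Lemma \ref{lem.reg}. Throughout I will parametrise every curve $\dppls$ (and $\dppl$) by $z_1\in\RR$ via the graph $(z_1,a_s(z_1))$ in the rotated chart, so that all trace spaces are identified with weighted $L^2$ or $H^1$ spaces on a common reference line $\RR$.

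For $(i)$, I would apply Proposition \ref{rellich} directly to the regularised solution $\vs$, obtaining
$$\nrm{\ddn{\vs}}{\ws{0}{2}{\ud}{\dppls}}\leq k\,\nrm{g_s}{\ws{1}{2}{\ud}{\dppls}},$$
with $k$ independent of $l$. The stability part of Lemma \ref{approx} then controls $\nrm{g_s}{\ws{1}{2}{\ud}{\dppls}}$ by $k\,\nrm{g}{\ws{1}{2}{\ud}{\dppl}}$, uniformly in $s$, giving the desired uniform bound. The equivalence $1+\rho^2(z)\sim 1+z_1^2+(l')^2$ on $\dppls$, which uses $|a_s'|\leq|a'|\leq 1$ and $a\leq a_s\leq 2a$, guarantees that weights computed on the moving boundary transfer into weights on the reference line with constants independent of $s$.

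For $(ii)$, the uniform bound in $(i)$, once transported onto the reference line $\RR$ via the parametrisation, places the sequence $(\ddn{\vs})$ in a bounded subset of a single weighted $L^2$ space. Reflexivity yields, up to a subsequence, a weak limit $\varpi$; via the same parametrisation this weak limit is interpreted on $\dppl$ in $\ws{0}{2}{\ud}{\dppl}$, which I take to be the intended meaning of the statement.

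For $(iii)$, I first work with $u\in C^\infty_c(\overline{\ppl})$ and $g\in {\cal E}(\dppl)$: Lemma \ref{lem.reg} ensures $\vs\in H^2_\loc(\overline{\ppls})$, so the classical Green's identity
$$\int_{\ppls}\nabla \vs\cdot\nabla u\,dy=\int_{\dppls}\ddn{\vs}\, u\,d\sigma$$
holds. On the left, Proposition \ref{prop.conv.sol.reg} gives $\vs\to v$ strongly in $\ws{1}{2}{0}{\ppl}$ (with $\vs$ extended by $g$ on $\ppl\setminus\ppls$), so the integral converges to $\int_{\ppl}\nabla v\cdot\nabla u\,dy$. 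On the right, I rewrite the integral as one over $\RR$ with Jacobian $\sqrt{1+(a_s')^2}$ and pair $\ddn{\vs}$ against the restriction of $u$ to the graph of $a_s$; the uniform convergence of $a_s$ to $a$ and of $a'_s$ to $a'$ on every compact set, combined with the weight equivalence mentioned above, ensures that the test function built from $u$ converges strongly in the predual of the weak-convergence space of $(ii)$, so the pairing passes to the limit and produces $\int_{\dppl}\varpi\, u\,d\sigma$. The extension to $g\in\ws{1}{2}{\ud}{\dppl}$ is performed by density through the continuity of the operator $T$ (Proposition \ref{rellich}); the extension to general $u\in\ws{1}{2}{0}{\ppl}$ follows from the density of smooth functions and from the bilinear bound provided by the Cauchy--Schwarz pairing of $\ws{0}{2}{\ud}{\dppl}$ with $\ws{0}{2}{-\ud}{\dppl}$, which dominates the trace of $u$.

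The main technical obstacle is the passage to the limit in the boundary integral: $\ddn{\vs}$ lives on a moving curve $\dppls$ and converges weakly, while the test function $u$ naturally has its trace on the limit curve $\dppl$, so one must set up a common reference space (the weighted $L^2$ on $\RR$) and verify that the change-of-variables $\cWs$ produces no stray contribution in the limit. Once the weight equivalence between $\rho$ and $(1+z_1^2+(l')^2)^{1/2}$ is established uniformly in $s$, and the strong convergence of the data and of $a_s$ is used to upgrade weak convergence of $\ddn{\vs}$ against a strongly convergent test sequence, the argument closes cleanly.
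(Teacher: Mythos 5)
Your overall strategy matches the paper's: part (i) via Proposition \ref{rellich} plus Lemma \ref{approx}, part (ii) by weak compactness from the uniform bound, and part (iii) by writing the Green identity at the regularised level (where $H^2_\loc$ regularity is available from Lemma \ref{lem.reg}), then passing $s\to 0$ with the strong $\ws{1}{2}{0}{\ppl}$ convergence of Proposition \ref{prop.conv.sol.reg} on the left and a pairing argument on the right, followed by density in $g$ and $u$. The paper orders the density step differently (it approximates a general $g$ by $g^\delta\in{\cal E}(\dppl)$ and lets $\delta\to 0$ at fixed $s$ \emph{before} sending $s\to 0$), but this is cosmetic.

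One technical slip: you claim that $a_s'$ converges to $a'$ uniformly on compacts. This is false near the corner $z_1=l'$: since $a_s'(z_1)=(z_1-l')/\sqrt{(z_1-l')^2+s^2}$ and $a'(z_1)=\sgn(z_1-l')$, at $|z_1-l'|=s$ the two differ by $1-1/\sqrt{2}$, independently of $s$. The convergence is only pointwise a.e.\ and is dominated (by $1$), which is why the paper invokes the Lebesgue theorem on the normal-discrepancy term $\int_\RR \nabla v(z_1,a_s(z_1))\cdot\{\ti\nu-\ti\nu_s\}\,h(z_1)\,dz_1$ after a Cauchy--Schwarz estimate, rather than a uniform-convergence argument. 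Replacing ``uniformly on compacts'' by ``pointwise a.e., bounded, hence by dominated convergence'' repairs your step and brings it in line with the paper's reasoning; the rest of the argument closes as you describe.
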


\begin{proof}
Part (i) comes from Proposition \ref{rellich} combined with
Lemma \ref{approx}. Again we approximate $g\in \ws{1}{2}{\ud}{\dppl}$
by $g^\delta \in {\cal E}(\dppl)$ and we call $\vsd$ the unique solution of problem
\eqref{pqt.reg} with data $g^\delta$ given on $\ppls$. %Thanks to 
%Lemma \ref{approx} combined with arguments similar to the
%proof of Proposition 
By continuity of the solution of problem \eqref{pqt.reg} 
with respect to   the data, one easily shows that
$$
\begin{aligned}
\nrm{\nabla(\vsd-\vs)}{L^2(\ppls)}& \leq k \nrm{ R_s(g^\delta) - R_s(g)}{\ws{1}{2}{0}{\ppls}}  \leq k' \nrm{g^\delta -g}{\ws{1}{2}{\ud}{\dppls}} \\
& \leq k''\nrm{g^\delta -g}{\ws{1}{2}{\ud}{\dppl}}
\end{aligned}
$$
thanks to the weighted Rellich estimates above, one has  also that
$$
\nrm{ \ddn{\vsd} - \ddn{\vs}}{ \ws{0}{2}{\ud}{\dppl}} \leq k \nrm{g^\delta -g}{\ws{1}{2}{\ud}{\dppl}}.
$$
And because $g^\delta$ is regular, $\vsd\in H^2_\loc(\ppls)$. This allows us to write 
the  Green's formula for every $h \in \D(\ov{\ppls})$:
$$
\int_{\ppls}^{}\nabla \vsd \nabla h \,d y = \int_{\dppls}^{}\ddn{\vsd} h\,d\sigma(y)
$$
Thanks to strong convergence shown above, one can let
$\delta$ go to the limit and get:
\begin{equation}\label{Green.dom.reg}
\int_{\ppls}^{}\nabla \vs \nabla h \,d y = \int_{\dppls}^{}\ddn{\vs} h\,d\sigma(y)
\end{equation}
note that working only with $\ws{1}{2}{\ud}{\dppls}$
one can not write directly the Green's
formula: $\vs$ is not regular enough. 
Thanks to the last estimate of Lemma \ref{approx},
one has that 
$$
\lim_{s\to 0}\int_{\ppls}^{}\nabla \vs \nabla h \,d y = \int_{\ppl}^{}\nabla v \nabla h \,d y 
$$
Thus a limit for the boundary term in the r.h.s. of \eqref{Green.dom.reg} exists.
We express the boundary term for a fixed $s$ in $\pplst$ in $z$ 
coordinates, and we choose $h$ to be only a function of 
$z_1$ on the boundary. Moreover $\ti{\nu}:=(\sgn(z_1),-1)$ is the 
limit outward normal, (resp. $\ti{\nu}_s:=(a'(z_1),-1)$) and we write
$$
\begin{aligned}
  \lim_{s\to 0} \int_{\RR}^{}\nabla v(z_1,a_s(z_1))\cdot\ti{\nu} h(z_1)\,dz_1 = &\int_{\ppl}^{}\nabla v \nabla h \,d y \, \,+\\
 + \lim_{s\to0} & \int_{\RR}^{}\nabla v(z_1,a_s(z_1))\cdot \{ \ti{\nu} - \ti{\nu}_s \} h(z_1)\,dz_1 .
\end{aligned}
$$
The last term can be estimated through  the Cauchy-Schwartz
inequality
$$%\begin{aligned}
\int_{\RR}^{}\nabla v(z_1,a_s(z_1))\cdot \{ \ti{\nu} - \ti{\nu}_s \} h(z_1)\,dz_1 \leq \nrm{g}{\ws{1}{2}{\ud}{\dppl}}\nrm{(\sgn-a_s')h}{\ws{0}{2}{-\ud}{\dppl}}
%\end{aligned}
$$
Applying the Lebesgues theorem, the last term in the latter r.h.s. goes to zero. Indeed,
note that, by Poincar{\'e}-Wirtinger arguments, there exists a constant
$k$ s.t.
$$
\nrm{h}{\ws{0}{2}{-\ud}{\dppl}} \leq k \nrm{h}{\ws{1}{2}{0}{\ppl}}.
$$
By density and continuity arguments, one extends the Green's 
formula to all functions in $\ws{1}{2}{0}{\ppl}$.
\end{proof}

\begin{proposition}\label{extension}
  Let $T$ be the linear continuous operator from $\ws{1}{2}{\ud}{\dppl}$ on
$\ws{0}{2}{\ud}{\dppl}$ s.t. $T(g)=\ddn{v}$ where the  normal derivative
is to be understood as a weak limit exhibited above. Then $T$ is extended
as a map from $\ws{0}{2}{-\ud}{\dppl}$ on $\ws{-1}{2}{-\ud}{\dppl}$
where $\ws{ -1}{2}{-\ud}{\dppl}=(\ws{1}{2}{\ud}{\dppl})'$.
\end{proposition}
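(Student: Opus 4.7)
The plan is to adapt Ne{\v c}as's transposition argument from Chap.~5 of \cite{Ne.Book.67} to the weighted setting. The crucial observation is a self-adjointness relation: if $g,\psi\in\ws{1}{2}{\ud}{\dppl}$ have harmonic lifts $v,w$ solving \eqref{qpt} for these respective data, then Green's second identity (using $\Delta v=\Delta w=0$) formally gives
$$
\int_{\dppl} T(g)\,\psi\,d\sigma \;=\; \int_{\dppl} g\,T(\psi)\,d\sigma,\qquad(\star)
$$
which, once justified, both dictates the definition of the extension and guarantees its consistency with the operator already defined. Accordingly, for $g\in\ws{0}{2}{-\ud}{\dppl}$ I set
$$
\langle T(g),\psi\rangle \;:=\; \int_{\dppl} g\,T(\psi)\,d\sigma,\qquad\forall\psi\in\ws{1}{2}{\ud}{\dppl}.
$$

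The continuity of this functional is then immediate: the product of the weights $(1+\rho^2)^{-\ud/2}$ and $(1+\rho^2)^{\ud/2}$ equals one, so $\ws{0}{2}{-\ud}{\dppl}$ pairs with $\ws{0}{2}{\ud}{\dppl}$ through the usual $L^2(\dppl)$ integral. Cauchy-Schwartz combined with $\nrm{T(\psi)}{\ws{0}{2}{\ud}{\dppl}}\le k\nrm{\psi}{\ws{1}{2}{\ud}{\dppl}}$ (Proposition \ref{rellich}, passed from $\ppls$ to $\ppl$ via Proposition \ref{convergence}(i)) yields
$$
|\langle T(g),\psi\rangle|\le k\,\nrm{g}{\ws{0}{2}{-\ud}{\dppl}}\,\nrm{\psi}{\ws{1}{2}{\ud}{\dppl}},
$$
hence $T(g)\in(\ws{1}{2}{\ud}{\dppl})'=\ws{-1}{2}{-\ud}{\dppl}$. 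Since the inclusion $\ws{1}{2}{\ud}{\dppl}\hookrightarrow\ws{0}{2}{-\ud}{\dppl}$ is trivial on the $L^2$ part, consistency of the new definition with the previously constructed $T$ on this dense subspace follows directly from $(\star)$.

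The symmetry $(\star)$ itself is established by regularisation. For ${\cal E}(\dppl)$-smooth data, Lemma \ref{lem.reg} gives $v_s,w_s\in H^2_\loc(\overline{\ppls})$, so the classical Green identity on the $C^\infty$ domain $\ppls$ reads
$$
\int_{\dppls} T_s(g)\,\psi_s\,d\sigma\;=\;\int_{\ppls}\nabla v_s\cdot\nabla w_s\,dy\;=\;\int_{\dppls} g_s\,T_s(\psi)\,d\sigma.
$$
The passage to the limit $s\to 0$ combines the weak convergence $T_s(g)\rightharpoonup T(g)$ (and the analogous convergence for $\psi$) in the weighted $L^2$ trace norm, as provided by Proposition \ref{convergence}(ii), with the strong convergence of the boundary data $g_s,\psi_s$ from Lemma \ref{approx} and the uniform control of the Jacobian of the parametrisation $z_1\mapsto(z_1,a_s(z_1))$. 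A density argument (${\cal E}(\dppl)$ is dense in $\ws{1}{2}{\ud}{\dppl}$ by Theorem I.1 in \cite{Ha.71}) then extends $(\star)$ to all $g,\psi\in\ws{1}{2}{\ud}{\dppl}$.

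The main obstacle I anticipate is precisely this limiting procedure: the normal derivatives converge only weakly in the weighted trace norm, while the test traces live on the $s$-dependent boundary $\dppls$, so one must carefully pair weak against strong convergence across the moving interfaces $\dppls\to\dppl$. Fortunately, the uniform bounds produced in Propositions \ref{rellich} and \ref{convergence} were built for exactly this purpose, so the pairing can be absorbed without losing the uniformity of the constants in $l$.
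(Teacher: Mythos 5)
Your argument is correct and is essentially the paper's: define (or control) $T(g)$ by transposition, obtain the symmetry relation $\int_{\dppl}T(g)\,\psi\,d\sigma=\int_{\dppl}g\,T(\psi)\,d\sigma$, combine it with the weighted Rellich bound from Proposition~\ref{rellich} and Cauchy--Schwartz, and close by density of $\ws{1}{2}{\ud}{\dppl}$ in $\ws{0}{2}{-\ud}{\dppl}$. The only slight detour is that you re-derive the symmetry from scratch through the regularisation $\ppls\to\ppl$, whereas the paper obtains it immediately by applying the Green's formula of Proposition~\ref{convergence}(iii) twice (once for each of the two harmonic lifts), so that step can be shortened.
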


\begin{proof}
 $\ws{1}{2}{\ud}{\dppl}$ is dense in $\ws{0}{2}{-\ud}{\dppl}$. Let
$g,h$ be two functions of $\ws{1}{2}{\ud}{\dppl}$ such that 
$$
\left\{ 
\begin{aligned}
\Delta u = 0,\quad \text{ in } \ppl, \\
u=g ,\quad \text{ on } \dppl,  
\end{aligned}
\right. \quad 
\text{ and } \quad 
\left\{ 
\begin{aligned}
\Delta v = 0,\quad \text{ in } \ppl, \\
v=h ,\quad \text{ on } \dppl.
\end{aligned}
\right.
$$ 
Then the Green's formula from Proposition \ref{convergence} applies twice, giving
$$
\int_{\dppl}^{}u (\ddn{v}) \,d\sigma(y)= \int_{\dppl}^{} (\ddn{u}) v \,d\sigma(y).
$$
Thanks to the Rellich estimates, one then gets 
$$
\int_{\dppl}^{}(\ddn{u}) h \,d\sigma(y) \leq k \nrm{g}{\ws{0}{2}{-\ud}{\dppl}}\nrm{h}{\ws{1}{2}{\ud}{\dppl}},
$$
which gives that
$$
\nrm{T(g)}{\ws{-1}{2}{-\ud}{\dppl}}\leq k \nrm{g}{\ws{0}{2}{-\ud}{\dppl}},
$$
and density arguments complete the proof.
\end{proof}

\subsection{Weighted dual estimates on the normal derivatives}
We now return to the study of the vertical boundary layer
corrector that solves problem \eqref{QuaterPlane}. 
Thanks to Proposition \ref{extension}, we derive one of the key point estimates of the paper:

\begin{proposition}\label{prop.weak.estimates.micro}
There exists a unique solution $\tin \in \ws{1}{2}{0}{\Pi}$, of
problem \eqref{QuaterPlane}. Moreover there exists a constant $k$ that does not depend on $l$ s.t.
$$
\nrm{\ddn{\tin}}{\ws{-1}{2}{-\ud}{\dppl}} \leq k \left(\frac{1}{l}\right)^{1-\frac{1}{2M}},
$$
where the constant $M$ is defined as in Theorem \ref{BrBoMi}.
\end{proposition}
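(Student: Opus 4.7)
The plan is to obtain the estimate by combining the pointwise decay of $\tin$ from Theorem \ref{BrBoMi} with the extended very weak trace operator $T$ constructed in Proposition \ref{extension}, applied on the shifted smooth quarter-plane $\ppl$.

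\emph{Existence and uniqueness} of $\tin \in \ws{1}{2}{0}{\Pi}$ follow directly from Theorem \ref{BrBoMi} by taking $\alpha = 0 \in ]-\alpha_0,\alpha_0[$. For the quantitative estimate, I would first fix $l\geq 2$, so that $\ppl \subset \Pi$ lies strictly above the rough bottom $B$, the pointwise bound of Theorem \ref{BrBoMi} applies on $\dppl$, and $\tin$ is harmonic on $\ppl$. Then $\tin_{|\ppl}$ is a weak solution in $\ws{1}{2}{0}{\ppl}$ of the Dirichlet problem \eqref{qpt} with data $g := \tin_{|\dppl}$; by the uniqueness statement of Proposition \ref{prop.exist.weak}, it coincides with the solution $v$ of \eqref{qpt}, and the weak limit $\varpi$ produced by Proposition \ref{convergence} equals $\ddn{\tin}$ on $\dppl$. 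Invoking the extension of $T$ given in Proposition \ref{extension} yields
$$
\nrm{\ddn{\tin}}{\ws{-1}{2}{-\ud}{\dppl}} \;=\; \nrm{T(g)}{\ws{-1}{2}{-\ud}{\dppl}} \;\leq\; k\,\nrm{g}{\ws{0}{2}{-\ud}{\dppl}},
$$
with a constant $k$ independent of $l$.

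It then remains to estimate $\nrm{g}{\ws{0}{2}{-\ud}{\dppl}}$ using the pointwise decay from Theorem \ref{BrBoMi}. Since $\rho(y) \geq l > 1$ for every $y \in \dppl$, one has $|g(y)|^2 \leq k^2(1+\rho^2(y))^{-(1-\frac{1}{2M})}$, hence
$$
\nrm{g}{\ws{0}{2}{-\ud}{\dppl}}^2 \;\leq\; k^2 \int_{\dppl} \frac{d\sigma(y)}{(1+\rho^2(y))^{\frac{3}{2}-\frac{1}{2M}}}.
$$
Splitting $\dppl = E'_l \cup B'_l$ and using $\rho^2 \sim y_1^2$ on $B'_l$ and $\rho^2 = l^2 + y_2^2$ on $E'_l$, the integrability at infinity is granted by $\tfrac{3}{2} - \tfrac{1}{2M} > \tfrac{1}{2}$ (recall $M \sim 10$); the rescalings $t = y_1/l$ and $t = y_2/l$ then factor out the power $l^{-(2-\frac{1}{M})}$, so that
$$
\nrm{g}{\ws{0}{2}{-\ud}{\dppl}} \;\leq\; k\,\left(\frac{1}{l}\right)^{1-\frac{1}{2M}}.
$$
Chaining this with the operator bound above completes the proof.

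The main conceptual point is the identification $T(g) = \ddn{\tin}$: the trace space $\ws{1}{2}{\ud}{\dppl}$ on which $T$ was originally defined is strictly smaller than the space $\ws{0}{2}{-\ud}{\dppl}$ to which it is extended, and $g$ a priori only belongs to the latter, so one must invoke the density/continuity extension of Proposition \ref{extension} together with the Dirichlet uniqueness on $\ppl$ to interpret the weak normal derivative correctly. Everything else is a pointwise-to-integrated passage that is essentially bookkeeping on weights, but this identification is where the very weak machinery developed in Sections \ref{vws}--\ref{micro-macro} really pays off.
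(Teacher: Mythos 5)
Your proposal is correct and follows essentially the same route as the paper: apply Proposition \ref{extension} to the Dirichlet data $g=\tin|_{\dppl}$ (legitimate, since for $l>0$ the domain $\ppl$ lies strictly in the interior of $\Pi$, $\tin$ is harmonic there and, by uniqueness from Proposition \ref{prop.exist.weak}, $\tin|_{\ppl}$ is the unique weak solution of \eqref{qpt} with this data, so $T(g)=\ddn{\tin}$), then estimate $\nrm{\tin}{\ws{0}{2}{-\ud}{\dppl}}$ by inserting the pointwise bound from Theorem \ref{BrBoMi} and rescaling. Your integral computation $\nrm{g}{\ws{0}{2}{-\ud}{\dppl}}^2 \lesssim l^{-(2-\frac{1}{M})}$ reproduces exactly the paper's $\int_l^\infty \rho^{-(3-\frac{1}{M})}d\rho$; you merely spell out the identification $T(g)=\ddn{\tin}$ that the paper leaves implicit, which is a reasonable point to make explicit but not a different argument.
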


\begin{proof}
The proof is a straightforward application
of Proposition \ref{extension}:
$$
\nrm{\ddn{\tin}}{\ws{-1}{2}{-\ud}{\dppl}} \leq k \nrm{\tin}{\ws{0}{2}{-\ud}{\dppl}}. 
$$
As $\tin$ is at least $C^0$ inside the domain, we use the point-wise
 $L^\infty$ estimates from Theorem \ref{BrBoMi} which give:
$$
\nrm{\tin}{\ws{0}{2}{-\ud}{\dppl}}^2 \leq k'\int_{l}^{+\infty} \frac{1}{\rho^{3-\frac{1}{M}}} \,d\rho.
$$ 
That provides the desired result.
\end{proof}
\begin{rmk} This result express the decrease of the normal
derivative of $\tin$ on a vertical interface located at $y_1=l$.
These estimates improve the convergence
rate obtained in Proposition 4 in \cite{BrBoMi} by a factor 
of almost $\sqrt{1/l}$. Indeed we consider here the 
$\ws{-1}{2}{-\ud}{\dppl}$ norm, while in \cite{BrBoMi}, only the $\ws{-\ud}{2}{0}{\dppl}$ norm was used. In the rest of the article we imbed and exploit the result above into the macroscopic very weak setting.
\end{rmk}
%\section{Macro Sobolev versus micro weighted Sobolev norms}
\section{Correspondence between macro and micro Sobolev norms}
\label{micro-macro}
In \cite{BrBoMi}, a correspondence was  shown between $H^{\ud}_0(\gio)$ and a subspace of $\ws{\ud}{2}{0}{\dpl}$,
we extend it here between 
$H^{1}_0(\giop)$  and a subspace of $\ws{1}{2}{\ud}{\dppl}$ test functions. In what follows
the same could be written for $\Goutp$.
Taking $v\in H^1_0({\Gamma_{\rm in}}')$ we set 
$$
\tv\left(\frac{1}{\epsilon},\frac{x_2}{\epsilon}\right) = 
\tv\left(\frac{1}{\epsilon},y_2\right) = v(0,x_2),\quad \forall x_2 \in [0,1],
$$
and we extend  $\tv$ by zero on $\dppl$. Note that this makes sense
 because $v$ is  zero at $x_2=0$ and $x_2=1$, so that one has
\begin{lemma}\label{lemme.micro.macro}
  For a given function $v\in H^1_0({\Gamma_{\rm in}}')$ and $\tv$ defined above, 
the following equivalence of the  Sobolev trace norms occurs:
$$
\nrm{\tv}{\ws{1}{2}{\ud}{\dppl}} \leq k \nrm{v}{H^{1}_0({\Gamma_{\rm in}}')} \leq k' \nrm{\tv}{\ws{1}{2}{\ud}{\dppl}} 
$$
where the constants $k,k'$ do not depend on $\epsilon$.
\end{lemma}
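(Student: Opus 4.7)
The plan is to exploit the fact that $\tv$ has very small support inside $\dppl$: since $v$ vanishes at the two endpoints of ${\Gamma_{\rm in}}'$, the extension by zero is continuous across $y_2=0$ and $y_2=1/\epsilon$, so $\tv$ lives on the segment $S_\epsilon:=\{1/\epsilon\}\times[0,1/\epsilon]\subset E'_{1/\epsilon}$ and vanishes on $B'_{1/\epsilon}$ and on $\{1/\epsilon\}\times]1/\epsilon,\infty[$. This makes $\tv$ a genuine $H^1$ function on the boundary without spurious Dirac contributions at the endpoints, which is the only nontrivial qualitative point; the rest is a change of variables.

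Next I would estimate the weight $\rho$ on $S_\epsilon$: there one has $\rho^2=(1/\epsilon)^2+y_2^2$, hence
$$
\frac{1}{\epsilon^2}\leq 1+\rho^2\leq \frac{3}{\epsilon^2}
$$
for $\epsilon$ small, so $(1+\rho^2)^{1/2}$ is pointwise equivalent to $1/\epsilon$ with constants independent of $\epsilon$. Inserting the definition of the $\ws{1}{2}{\ud}{\dppl}$ norm and using that the supported part of $\tv$ lives on $S_\epsilon$, one gets
$$
\nrm{\tv}{\ws{1}{2}{\ud}{\dppl}}^2 \sim \epsilon \int_0^{1/\epsilon} |\tv(1/\epsilon,y_2)|^2\,dy_2 + \frac{1}{\epsilon}\int_0^{1/\epsilon}|\partial_{y_2}\tv(1/\epsilon,y_2)|^2\,dy_2,
$$
with equivalence constants that do not depend on $\epsilon$.

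Now I perform the obvious rescaling $x_2=\epsilon y_2$. By definition of $\tv$ one has $\tv(1/\epsilon,y_2)=v(0,x_2)$ and $\partial_{y_2}\tv(1/\epsilon,y_2)=\epsilon\,\partial_{x_2}v(0,x_2)$, together with $dy_2=dx_2/\epsilon$. The first integral above then transforms exactly into $\nrm{v}{L^2({\Gamma_{\rm in}}')}^2$ (the weight $\epsilon$ cancels the Jacobian $1/\epsilon$), and the second into $\nrm{\partial_{x_2}v}{L^2({\Gamma_{\rm in}}')}^2$ (the factor $\epsilon^2$ from the chain rule combined with $1/\epsilon$ from the Jacobian exactly cancel the $1/\epsilon$ coming from the weight). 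Summing both contributions yields
$$
\nrm{\tv}{\ws{1}{2}{\ud}{\dppl}}^2 \sim \nrm{v}{L^2({\Gamma_{\rm in}}')}^2+\nrm{\partial_{x_2}v}{L^2({\Gamma_{\rm in}}')}^2=\nrm{v}{H^1_0({\Gamma_{\rm in}}')}^2,
$$
with constants independent of $\epsilon$. The only subtle step is the first one, namely checking that the endpoint conditions $v(0,0)=v(0,1)=0$ are what prevents the extension-by-zero from introducing boundary distributions in the weak derivative; once this is observed, the argument is a purely algebraic scaling calculation, and I do not expect any additional obstacle.
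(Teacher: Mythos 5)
Your proof is correct and, modulo presentation, follows exactly the paper's strategy: observe that on the image segment $\{1/\epsilon\}\times[0,1/\epsilon]$ the weight $(1+\rho^2)^{1/2}$ is uniformly comparable to $1/\epsilon$, then perform the scaling $x_2=\epsilon y_2$ and watch the powers of $\epsilon$ cancel for both the function and its tangential derivative. The paper pulls the supremum of the weight out of the integral to get one inequality and says the others "follow the same," whereas you make the two-sided pointwise weight bound explicit at once, which is slightly cleaner but not a different argument.
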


\begin{proof}
  We start from the macroscopic side, the other way follows the same.
$$
\begin{aligned}
\int_{{\Gamma_{\rm in}}}  v^2(0,x_2) dx_2 & = \epsilon \int_0^{\ue} v^2(0,\epsilon y_2) dy_2 = \epsilon \int_0^{\ue} \tv^2\left(\ue,y_2\right) dy_2   \\
& \leq \epsilon \sup_{ y_2 \in [0,\ue]} \sqrt{1+y_2^2 + \left( \frac{1}{\epsilon} \right)^2 } 
\nrm{\tv}{\ws{1}{2}{\ud}{E'_{\frac{1}{\epsilon}}}}^2 \leq k \nrm{\tv}{\ws{1}{2}{\ud}{\dppl}}^2
\end{aligned}
$$
where the constant $k$ is obviously independent on $\epsilon$. Owing that $\partial_{y_2}\tv=\epsilon \partial_{x_2} v$, the derivative part is shown similarly.
\end{proof}

\section{Very weak estimates for boundary layer and wall law approximations}
\label{section.vws}
Turning again to the macroscopic error estimates, one defines the error
$\rui:= \uenp-\uiue$ where $\uenp$ is the exact solution of problem
\eqref{RugueuxCompletNP} and $\uiue$ the boundary layer approximation proposed in  \eqref{fbla}.
It satisfies the  set of equations:
\begin{equation}\label{eq.err} 
\left\{ 
\begin{aligned} 
&\Delta \rui = 0,\quad \text{ in } \Oe \\
& \rui = 0,\quad  \text{ on  } \Geps \\
& \rui = - \epsilon  \dd{u^1 }{x_2}(x_1,0)\left( \left( \beta - \obeta +\tin \right) \left(\frac{x_1}{\epsilon},\frac{1}{\epsilon} \right)+  \tout\left(\frac{x_1-1}{\epsilon},\frac{1}{\epsilon}\right)\right),\text{ on } \Gun, \\
& \ddn{\rui} =   -\ddn{\tin}\left(\frac{1}{\epsilon},\frac{x_2}{\epsilon}\right) \text{ on } \Gout ,\quad \ddn{\rui} =   - \ddn{\tout}\left(\frac{1}{\epsilon},\frac{x_2}{\epsilon}\right) \text{ on } {\Gamma_{\rm in}}. \\%\quad 
\end{aligned}  
\right. 
\end{equation}
In order to improve $L^2(\Oz)$ estimates obtained in \cite{BrBoMi}, we use the
material  above to prove the main result of this paper:

\begin{thm}\label{main}
  There exists a unique solution $\rui \in H^1(\Oe)$ of problem \eqref{eq.err};
it satisfies the estimate:
$$
\nrm{\rui}{L^2(\Oz)}  \leq k \epsilon^{\min\left(\td+\alpha, 2 - \frac{1}{2M}\right)},
$$
 the constants $\alpha$ and $M$ being defined in Theorem \ref{BrBoMi}.
\end{thm}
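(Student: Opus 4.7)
The proof follows a Ne\v cas-type duality argument adapted to the mixed boundary data of \eqref{eq.err}. Given $\varphi\in L^2(\Oz)$ of unit norm, I introduce the adjoint test function $w\in H^1(\Oz)$ as the unique solution of
\begin{equation*}
-\Delta w = \varphi \text{ in } \Oz,\qquad w=0 \text{ on } \Gz\cup\Gun,\qquad \ddn{w}=0 \text{ on } \Gamma_{\rm in}'\cup\Gamma_{\rm out}'.
\end{equation*}
The Dirichlet conditions on the horizontal edges and the Neumann conditions on the vertical edges are not arbitrary: they are precisely what makes the duality brackets of Green's formula pair the small trace of $\rui$ on $\Gz\cup\Gun$ and the small normal derivative of $\rui$ on the lateral boundary with, respectively, $\ddn{w}$ and $w$. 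Elliptic regularity on the convex square $\Oz$, handled at the Dirichlet/Neumann junction corners as in \cite{Grisvard}, gives $w\in H^2(\Oz)$ with $\nrm{w}{H^2(\Oz)}\leq k$, up to a mild corner loss that does not affect the final exponent.

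Since $\Oz\subset\Oe$ and $\Delta\rui=0$ in $\Oe$, two integrations by parts in $\Oz$ yield
\begin{equation*}
\int_{\Oz}\rui\,\varphi\,dx \;=\; \int_{\Gamma_{\rm in}'\cup\Gamma_{\rm out}'} w\,\ddn{\rui}\,d\sigma \;-\; \int_{\Gz\cup\Gun} \rui\,\ddn{w}\,d\sigma,
\end{equation*}
and the proof reduces to bounding the three surviving boundary contributions. On the lateral pieces, \eqref{eq.err} identifies $\ddn{\rui}$ with the microscopic normal derivative of $\tin$ (resp. $\tout$) evaluated at $(1/\epsilon,x_2/\epsilon)$, up to an $O(1)$ prefactor stemming from $\ovu/(1+\epsilon\obeta)$. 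The rescaling $y_2=x_2/\epsilon$ converts the macroscopic integral into $\epsilon$ times a microscopic duality bracket on $E'_{1/\epsilon}$; Proposition \ref{prop.weak.estimates.micro} with $l=1/\epsilon$ controls this derivative in $\ws{-1}{2}{-\ud}{\dppl}$ by $k\,\epsilon^{1-1/(2M)}$, while Lemma \ref{lemme.micro.macro} identifies the paired $\ws{1}{2}{\ud}{\dppl}$-norm of the rescaled trace of $w$ with $\nrm{w}{H^1_0(\Gamma_{\rm in}')}\leq\nrm{w}{H^2(\Oz)}$, giving a lateral contribution of order $\epsilon^{2-1/(2M)}$. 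On $\Gun$ the pointwise estimate of Theorem \ref{BrBoMi} at $y_2=1/\epsilon$, combined with the exponential decay of $\beta-\obeta$, yields $\nrm{\rui}{L^2(\Gun)}=O(\epsilon^{2-1/(2M)})$, which pairs against $\nrm{\ddn{w}}{L^2(\Gun)}\leq k$.

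The sharpest contribution comes from $\Gz$. Because $\rui$ vanishes on $\Geps$, which lies at vertical distance $O(\epsilon)$ below $\Gz$, integrating $|\partial_{x_2}\rui|$ along vertical fibres and applying Cauchy-Schwarz yields the sub-layer trace inequality
\begin{equation*}
\nrm{\rui}{L^2(\Gz)} \;\leq\; k\,\epsilon^{\ud}\,\nrm{\nabla\rui}{L^2(\Oe\setminus\Oz)}.
\end{equation*}
Plugging in the Dirichlet estimate of Theorem \ref{a.priori} gives $\nrm{\rui}{L^2(\Gz)}\leq k\,\epsilon^{\ud+\min(1+\alpha,\,\td-1/(2M))}=k\,\epsilon^{\min(\td+\alpha,\,2-1/(2M))}$, which multiplied by $\nrm{\ddn{w}}{L^2(\Gz)}\leq k$ is exactly the exponent appearing in the theorem. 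Summing the three boundary contributions and taking the supremum over $\varphi\in L^2(\Oz)$ of unit norm delivers the stated $L^2$ estimate.

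The main obstacle I expect is the cross-scale bookkeeping on the lateral boundaries, in particular the alignment of the weight exponent $-\ud$ of Proposition \ref{prop.weak.estimates.micro} with the image of the macroscopic $H^1_0$-trace of $w$ through the microscopic change of variables of Lemma \ref{lemme.micro.macro}. The Dirichlet/Neumann corner singularity of $w$ also requires care, but thanks to the right angles at the corners of $\Oz$ it can be absorbed without degrading any of the three exponents.
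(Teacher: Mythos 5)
Your proposal is correct and follows essentially the same route as the paper: the same adjoint problem with Dirichlet data on $\Gz\cup\Gun$ and Neumann data on $\giop$, the same use of Grisvard's $H^2$ regularity, the same Green's formula decomposition, the same reduction of the lateral terms to Proposition \ref{prop.weak.estimates.micro} (with $l=1/\epsilon$) paired against Lemma \ref{lemme.micro.macro}, and the same treatment of $\Gz$ (sub-layer Poincar\'e trace combined with the {\em a priori} gradient bound of Theorem \ref{a.priori}) and $\Gun$ (pointwise decay from Theorem \ref{BrBoMi}). The only difference is cosmetic: you spell out the sub-layer trace inequality for $\Gz$ explicitly, whereas the paper defers that step to the estimates already established in \cite{BrBoMi}.
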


\begin{proof}
For any given function $F\in L^2(\Oz)$, we solve the  regular problem:
find $v\in H^1_D(\Oz):=\{ u \in H^1(\Oz) \text{ s.t. } u=0  \text{ on } \gzu\}$
such that
$$
\left\{ 
\begin{aligned}
&-\Delta v = F ,\text{ in } \Oz, \\
& \ddn{v}=0, \text{ on } \giop, \\
& v=0,\text{ on } \Gz\cup \Gun.
\end{aligned}
\right.
$$
  
According to Theorem 4.3.1.4, p. 198 \cite{Grisvard}, 
$v\in H^2(\Oz)\cap H^1_D(\Oz)$ so that $v \in H^1(\partial \Oz)$ and, thanks to boundary conditions
on $\gzu$,  $v \in H^1_0(\gio)$. We are now in the position to apply 
the Chapter 5 of \cite{Ne.Book.67} to write that:
$$
\int_{\Oz}^{}\rui F \,dx = -\left( \rui, \ddn{v} \right)_{\gzu} + \left< \ddn{\rui}, v \right>_{\gio},
$$
where by the brackets we denote the duality pairing 
$H^{-1},H^1_0(\gio)$ and by the parentheses we denote the scalar product
in $L^2(\gzu)$. By standard interior regularity results
one  easily gets that $\tin\in H^2_{\loc}(\Pi)$ (resp. $\tout \in H^2_{\loc}(\Pi)$) so that the normal
derivatives 
$$
\ddn{\tin} \left( \ue, \frac{\cdot}{\epsilon} \right) \in L^2(0,1), \left(\text{resp. } \ddn{\tout} \left( \ue, \frac{\cdot}{\epsilon} \right)  \in L^2(0,1) \right) .
$$
for every fixed $\epsilon$. Thus the duality pairing becomes
an integral:
$$
\begin{aligned}%
\left< \ddn{\rui}, v \right> & = - \int_{\Gout}\ddn{\tin}  \left( \ue, \frac{x_2}{\epsilon} \right) v(x) d\sigma (x) - \int_{{\Gamma_{\rm in}}}\ddn{\tout}  \left( \ue, \frac{x_2}{\epsilon} \right) v(x) d\sigma (x) \\
& = - \epsilon \int_{0}^{\ue}\left\{ \ddn{\tin}\left( \ue, y_2 \right)\tvi\left( \ue, y_2 \right)+ \ddn{\tin}\left( \ue, y_2 \right)\tvo\left( \ue, y_2 \right) \right\}  \,dy_2 \\
&\leq \epsilon \left( \nrm{\ddn{\tin}}{\ws{-1}{2}{-\ud}{\partial \Pi_{\ue}'}}  \nrm{\tvo}{\ws{1}{2}{\ud}{\partial \Pi_{\ue}'}} 
+ \nrm{\ddn{\tout}}{\ws{-1}{2}{-\ud}{\partial \Pi_{\ue}'}}  \nrm{\tvi}{\ws{1}{2}{\ud}{\partial \Pi_{\ue}'}} \right)\\
& \leq  \epsilon \left( \nrm{\ddn{\tin}}{\ws{-1}{2}{-\ud}{\partial \Pi_{\ue}'}} + \nrm{\ddn{\tout}}{\ws{-1}{2}{-\ud}{\partial \Pi_{\ue}'}} \right) \nrm{v}{H^1_0(\giop)}
\end{aligned}
$$
where $\tvi$ and $\tvo$ are the microscopic test functions associated to
the trace of $v$ on $\giop$ as in section \ref{micro-macro}. 
One then concludes this part setting $l=1/ \epsilon$ in Proposition \ref{prop.weak.estimates.micro}.
The $L^2(\gzu)$ scalar product has  been estimated in \cite{BrBoMi}, 
using {\em a priori} estimates for the $\Gz$ part whereas the $\Gun$
part uses again $L^\infty$ estimates from Theorem \ref{BrBoMi}.
\end{proof}
A direct consequence of this result is 
\begin{thm}\label{thm.wall.law}
  The first order wall law solving
\begin{equation} \label{fowl}
\left\{ 
\begin{aligned} 
&\Delta \uu = 0 , \quad \text{ in } \Oz, \\
& \uu = \ov{U}  , \quad \text{ on } \Gun, \\
& \uu = \epsilon \obeta \dd{\uu}{x_2}  , \quad \text{ on } \Gz, \\
& \ddn{ \uu}= 0 , \quad \text{ on } \gio, \\
\end{aligned}  
\right. 
\end{equation}
satisfies the  error estimate 
$$
\nrm{\uenp-\uu}{L^2(\Oz)} \leq k \epsilon^\td,\quad 
$$
where the constant $k$ is independent on $\epsilon$.
\end{thm}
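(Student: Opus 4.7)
The plan is to write
\[
\nrm{\uenp-\uu}{L^2(\Oz)} \leq \nrm{\uenp-\uiue}{L^2(\Oz)} + \nrm{\uiue-\uu}{L^2(\Oz)},
\]
where $\uiue$ is the full boundary layer approximation \eqref{fbla}. The first term is already handled by Theorem \ref{main}, which gives a rate $\epsilon^{\min(3/2+\alpha,\,2-1/(2M))}$, in particular bounded above by $k\epsilon^{3/2}$. So the whole point is to estimate $\uiue-\uu$ in $L^2(\Oz)$.

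First I would solve \eqref{fowl} explicitly. Since $\uu$ depends only on $x_2$ (because of the homogeneous Neumann conditions on $\gio$ and the linear profile on $\Gun$), an elementary calculation gives
\[
\uu(x_1,x_2)=\ovu\,\frac{x_2+\epsilon\obeta}{1+\epsilon\obeta}.
\]
Since $u^0=\ovu\,x_2$ and $\partial_{x_2}u^0(x_1,0)=\ovu$, inserting this into the definition \eqref{fbla} and simplifying produces the clean identity
\[
\uiue-\uu=\frac{\epsilon\ovu}{1+\epsilon\obeta}\Bigl(\beta\!\lrxe-\obeta+\tin\!\lrxe+\tout\!\Bigl(\tfrac{x_1-1}{\epsilon},\tfrac{x_2}{\epsilon}\Bigr)\Bigr),
\]
i.e.\ the terms $-\obeta x_2$ coming from the cell boundary layer correction exactly cancel the linear Robin contribution of the wall law, leaving only the residual $\beta-\obeta$ and the vertical correctors.

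Next I would estimate the three pieces on the right-hand side in $L^2(\Oz)$ by the change of variables $y=x/\epsilon$. For the cell residual $\beta-\obeta$, the Fourier series from Lemma~1 shows exponential decay in $y_2$, so after rescaling
\[
\nrm{\beta(\cdot/\epsilon)-\obeta}{L^2(\Oz)}^2
= \epsilon\!\int_0^1\!\!\int_0^{1/\epsilon}|\beta(x_1/\epsilon,y_2)-\obeta|^2\,dy_2\,dx_1 \leq k\,\epsilon,
\]
hence this term contributes $O(\epsilon^{3/2})$ after multiplying by the prefactor $\epsilon/(1+\epsilon\obeta)$. For the vertical correctors $\tin$ and $\tout$, I would insert the pointwise bound of Theorem \ref{BrBoMi}, $|\tin(y)|\leq k(1+\rho^2(y))^{-(1-1/(2M))/2}$, and integrate in polar coordinates on the rescaled square of side $1/\epsilon$, which yields $\nrm{\tin(\cdot/\epsilon)}{L^2(\Oz)}\leq k\epsilon^{1-1/(2M)}$, contributing $O(\epsilon^{2-1/(2M)})$, strictly better than $\epsilon^{3/2}$ since $M<10$ and hence $1/(2M)<1/2$.

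Combining these and the triangle inequality yields the claimed $\epsilon^{3/2}$ rate. The step requiring the most care is the algebraic reduction of $\uiue-\uu$: it is crucial to check that precisely the linear part $-\obeta x_2$ in \eqref{fbla} is what matches the $\epsilon\obeta$ shift in the Robin datum of \eqref{fowl}, so that the leading $O(\epsilon)$ discrepancy vanishes and one is left only with terms that admit a supplementary $\epsilon^{1/2}$ gain from the microscopic scale. The rest of the argument is a rescaling plus the decay estimates already available from Lemma~1 and Theorem \ref{BrBoMi}.
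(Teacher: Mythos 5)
Your proof is correct and takes exactly the route the paper indicates: the paper itself says Theorem~\ref{thm.wall.law} is ``a direct consequence'' of Theorem~\ref{main} and otherwise refers to Theorem~5.3 of \cite{BrBoMi}, which is precisely the triangle inequality $\nrm{\uenp-\uu}{L^2}\le\nrm{\uenp-\uiue}{L^2}+\nrm{\uiue-\uu}{L^2}$ that you spell out, and your explicit solution $\uu=\ovu(x_2+\epsilon\obeta)/(1+\epsilon\obeta)$, the cancellation of the $\obeta x_2$ contributions in $\uiue-\uu$, and the rescaled $L^2$ estimates for $\beta-\obeta$ and the vertical correctors all check out, with the $\epsilon^{3/2}$ rate saturated by the $\beta-\obeta$ piece. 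One small logical slip: you infer $1/(2M)<1/2$ ``since $M<10$,'' but the implication is reversed — what is needed is $M>1$, which holds here only because one chooses $\alpha>0$ and $M$ close to $1/(1-2\alpha)\approx 10$ to optimize the exponent in Theorem~\ref{main}; for the same reason $\alpha\ge 0$ is implicitly required so that $\min(\tfrac32+\alpha,\,2-\tfrac1{2M})\ge\tfrac32$, and that parameter choice should be stated rather than left implicit.
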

The proof follows exactly the same line as in Theorem 5.3 in \cite{BrBoMi},
but the result is improved thanks to the  Theorem \ref{main} above.

\section{Numerical evidence}
\label{num}
We define the rough bottom of the domain by setting $f$ in \eqref{domain} as:
$$
f(y_1)= -1+\nud\sin(2\pi y_1), \quad \forall y_1 \in [0,1].
$$
This is obviously a Lipschitz  smooth function 
compatible with the hypotheses of the claims. 
In what follows we look for a numerical validation of theoretical 
convergence results above: we compute 
for every fixed $\epsilon\in[0,1]$
\begin{itemize}
\item[-] $\ueh$ a numerical approximation of $u^\epsilon$ solving a discrete counterpart of problem \eqref{RugueuxCompletNP}.
\item[-] $\uiueph$,  the periodic full boundary layer approximation  (it does not contain any vertical corrector) defined in \eqref{uiuep}
\item[-] $\uiueh$, the full boundary layer approximation including vertical correctors defined in \eqref{fbla}
\item[-] $u^1_h$, the averaged wall-law presented in \eqref{fowl}, and $u^0$ the zero order approximation.
\end{itemize}
 
We use the finite element method code {\tt freefem++} \cite{ffm}, in order
to compute $\ueh, \beta_h, \tinh$ and $\touth$. The $\PP_2$ Lagrange finite elements interpolation is chosen.

\paragraph{Microscopic correctors}

As $\beta,\tin$ and $\tout$ are defined on infinite domains, we have to truncate
these and set up proper boundary conditions on the corresponding new boundaries. For $\beta$,
this was  analysed in \cite{JaMiNe.01} so that we only need
to solve
$$
\left\{
\begin{aligned}
& -\Delta \beta_L = 0,\text{ in} \quad \zup \cap \{y\in \rr^2 y_2<L\} ,\\
& \beta_L = - y_2,\text{ on } P^0,\\
& \beta_L \text{ is } \yup,\\
& \ddn{\beta_L}=0, \text{ on } \{ y_2=L\}.
\end{aligned}
\right.
$$
The approximation $\beta_L$ is exponentially close
to $\beta$ with respect to  $L$ in the Dirichlet norm (see Proposition 4.2 \cite{JaMiNe.01}).
For the vertical correctors we set the domain
$\pil:=\Pi\cap [-1,L]^2$ (resp. $\pilm:=\Pi_-\cap [-L,1]^2$
and we solve the  problem
\begin{equation}\label{approx.xin}
\left\{
\begin{aligned}
& -\Delta \tinl = 0, \quad \text{ in } \Pi, \\
& \ddn{\tinl}(0,y_2) = - \ddn{ \beta }(0,y_2) ,\quad \text{ on } E,\\
& \tinl = 0, \quad  \text{ on } B,\\
& \ddn{\tinl}=0, \text{ on } \{ y_1=L \} \cup\{ y_2=L \}=:G_L ,
\end{aligned}
\right.
\end{equation}
the symmetric problem for $\toutl$ being omitted.
By Proposition 4 in \cite{BrBoMi} and Proposition \ref{prop.weak.estimates.micro} above,
one easily deduces the  convergence result:

\begin{proposition}\label{prop.approx.xin}
  There exists a unique solution $\tinl \in \ws{1}{2}{0}{\pil}$
solution of problem \eqref{approx.xin}, moreover one has
$$
\begin{aligned}
\nrm{\tinl-\tin}{\ws{1}{2}{0}{\pil}}\leq k L^{-\alpha},\quad 
\nrm{\tinl-\tin}{\ws{0}{2}{-1}{\pilp}}\leq k L^{-1+\frac{1}{2M}}
\end{aligned}
$$
where the constants $k,k'$ are independent of $L$ 
and $\alpha$ and $M$ are defined as in Theorem \ref{BrBoMi}.
$\pilp$ is the restriction of $\pil$ to $\R_+\times\R_+$.
\end{proposition}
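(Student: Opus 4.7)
The plan is to establish existence and uniqueness by a standard Lax-Milgram argument on the bounded Lipschitz domain $\pil$, and then to derive both convergence rates by analysing the error $e_L := \tinl - \tin\big|_{\pil}$. This difference is harmonic in $\pil$, inherits the homogeneous Dirichlet condition on $B \cap \pil$ and the homogeneous Neumann condition on $E \cap \pil$, and satisfies the nonzero residual $\ddn{e_L} = -\ddn{\tin}$ on the artificial truncation boundary $G_L$. Since $\pil$ is bounded, $\ws{1}{2}{0}{\pil}$ is norm-equivalent to $H^1(\pil)$; coercivity on the closed subspace of functions vanishing on $B \cap \pil$ follows from Poincar{\'e}, and the Neumann datum $-\ddn{\beta}$ on $E \cap \pil$ defines a continuous linear form by the local $H^2$ regularity of $\beta$ used for Proposition \ref{rellich}. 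Lax-Milgram then produces a unique $\tinl \in \ws{1}{2}{0}{\pil}$.

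For the first bound I would test the residual equation against $e_L$ and integrate by parts to get
$$
\int_{\pil}|\nabla e_L|^2\,dy \;=\; -\int_{G_L}\ddn{\tin}\, e_L\, d\sigma,
$$
and then estimate the right-hand side by an $H^{-\ud}$--$H^{\ud}$ duality on $G_L$. The key input is that $\tin \in \ws{1}{2}{\alpha}{\Pi}$ from Theorem \ref{BrBoMi}, which gives the $H^1$ tail estimate
$$
\int_{\Pi\setminus \pil}|\nabla \tin|^2\,dy \;\leq\; L^{-2\alpha}\int_{\Pi\setminus \pil}|\nabla \tin|^2 (1+\rho^2)^{\alpha}\,dy \;\leq\; k L^{-2\alpha}.
$$
Combined with Proposition 4 of \cite{BrBoMi}, which controls the normal trace of $\tin$ at distance $L$, and a trace inequality for $e_L$ on $G_L$, this delivers $\|e_L\|_{\ws{1}{2}{0}{\pil}} \leq k L^{-\alpha}$.

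For the sharper weighted $L^2$ bound I would proceed by duality, in the spirit of Aubin-Nitsche. Given $F \in \ws{0}{2}{1}{\pilp}$, I would solve the adjoint Dirichlet-Neumann Poisson problem on $\pilp$ with homogeneous boundary data matching those of $e_L$, and reduce the pairing $\int_{\pilp} e_L F\,dy$ to the single boundary term $\int_{G_L \cap \pilp} \ddn{\tin}\,\varphi\, d\sigma$, where $\varphi$ is the adjoint solution. Applying Proposition \ref{prop.weak.estimates.micro} with $l = L$ then yields
$$
\Bigl|\int_{G_L \cap \pilp}\ddn{\tin}\,\varphi\,d\sigma\Bigr| \;\leq\; k L^{-1+\frac{1}{2M}}\, \|\varphi\|_{\ws{1}{2}{\ud}{\dppl}},
$$
and the adjoint-side norm is uniformly controlled via the correspondence of Lemma \ref{lemme.micro.macro} together with elliptic regularity for $\varphi$.

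The main obstacle is that $G_L$ consists of two orthogonal segments (vertical at $y_1=L$ and horizontal at $y_2=L$) rather than a single shifted vertical edge as in Proposition \ref{prop.weak.estimates.micro}. I would circumvent this by embedding $G_L \cap \pilp$ into $\dppl$ with $l = L$, so that the proposition applies verbatim to both portions at once; alternatively, one can re-run the weighted Rellich argument with the cut-off vector field rotated by $\pi/2$ on the horizontal piece, exploiting the same symmetry of $\Pi'$ that was used throughout section \ref{vws}. Once this is in place, the announced decay rates follow directly from the pointwise bounds of Theorem \ref{BrBoMi} and Proposition \ref{prop.weak.estimates.micro}.
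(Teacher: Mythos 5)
The paper offers no detailed proof of this proposition — it simply writes that the result ``is easily deduced'' from Proposition 4 of \cite{BrBoMi} together with Proposition \ref{prop.weak.estimates.micro} — so you are in the position of having to fill the gap from scratch. Your overall scaffolding (Lax--Milgram on the bounded truncated domain, integration by parts giving $\int_{\pil}|\nabla e_L|^2 = -\int_{G_L}(\partial_n\tin)\,e_L$, and a duality argument for the weighted $L^2$ estimate with Proposition \ref{prop.weak.estimates.micro} as the engine at $l=L$) is the right skeleton and is consistent with what the paper cites. There are, however, two genuine gaps.

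First, for the $H^1$ estimate, the tail bound $\int_{\Pi\setminus\pil}|\nabla\tin|^2\,dy\le kL^{-2\alpha}$ that you derive from $\tin\in\ws{1}{2}{\alpha}{\Pi}$ is a \emph{volume} estimate; it does not by itself control the boundary pairing $\int_{G_L}(\partial_n\tin)\,e_L$. To close, one needs a trace-type bound on $\partial_n\tin$ over $G_L$ (precisely the content of Proposition~4 of \cite{BrBoMi}, which you do cite), and the step from the volume tail to that trace bound is not spelled out. As written, the tail estimate is dead weight: it is neither the required input nor does it imply it without an auxiliary trace/interior-regularity argument.

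Second, and more seriously, the reduction of the duality argument to Proposition \ref{prop.weak.estimates.micro} is not sound as described. The claim that ``$G_L\cap\pilp$ can be embedded into $\dppl$ with $l=L$'' is false: $\dppl$ with $l=L$ is $\{L\}\times\R_+\,\cup\,]L,\infty[\times\{0\}$, so only the vertical segment $\{y_1=L\}\cap\pilp$ lies in $\dppl$; the horizontal segment $\{y_2=L,\,0<y_1<L\}$ of $G_L$ is nowhere in $\dppl$, and Proposition \ref{prop.weak.estimates.micro} says nothing about it. That piece has to be treated separately — e.g.\ through the interior-gradient estimate $|\nabla\tin(y)|\le k\,\rho^{-1}\|\tin\|_{L^\infty(B_{\rho/2}(y))}\le k\,\rho^{-2+1/(2M)}$ combined with the pointwise decay of Theorem \ref{BrBoMi}, which gives a better rate and so is harmless, but this is a distinct argument you do not supply. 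In addition, working on $\pilp$ rather than $\pil$ introduces the interior interface $\{y_2=0\}\cap\{0<y_1<L\}$ as part of $\partial\pilp$, on which neither $e_L$ nor $\partial_n e_L$ vanishes; your Green's-formula reduction to ``a single boundary term on $G_L$'' silently discards this contribution. Finally, the assertion that the adjoint trace norm $\|\varphi\|_{\ws{1}{2}{\ud}{\dppl}}$ is ``uniformly controlled via Lemma \ref{lemme.micro.macro} together with elliptic regularity'' is a hand-wave: that lemma concerns the micro--macro correspondence on $\giop$, not the regularity of a dual problem on the truncated microscopic domain, and the required weighted $H^2$-type bound on $\varphi$ with constants uniform in $L$ would need its own proof.
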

In figures \ref{mesh.beta} and \ref{mesh.xi}, we display the meshes obtained after
 adaptative procedure, described below, for $\beta^L$ and $\toutl$. The total number
of vertices used in the meshes for discretising $\beta^L_h$, $\tinlh$ and $\toutlh$
are 39000, 78000 and 79000. In the simulation of $\beta^L_h$ the horizontal top is set to $L:=10$. 
For $\tinlh$ and $\toutlh$ the vertical interface is set to $L:=20$.
The contours of corresponding solutions $\beta^L_h$ and $\toutlh$ are displayed in figures \ref{beta}, \ref{xi.out} and \ref{zoom.xiout},
whereas the normal derivative $\ddn{\beta^L_h}$ and $-\ddn{\xi_{{\rm in},h}^L}$ 
are shown to coincide along $\{0\} \times [-\frac{3}{4},1]$  in figure \ref{dxbeta}.
\begin{figure}[!h] 
\begin{center}
\input{dbdnE}

\end{center}
\caption{Normal derivatives $\ddn{\beta^L}$ and $\ddn{\toutl}$ on $E$, the vertical interface}
\label{dxbeta}
\end{figure}

We perform a single microscopic computation. Then 
we re-scale the boundary layer to the macroscopic 
domain setting 
$$
\beta^L_{\epsilon,h}(x):=\beta^L_{h}\lrxe, \quad \tinleh(x) := \tinlh \lrxe, \quad \toutleh(x) := \toutlh \lrxe, \quad \forall x \in \Oe.
$$
We quantify the interpolation error with respect to $\epsilon$. 
$$
\nrm{(\beta_{\epsilon,h}^L- \beta )\lrde }{L^2(\Oz)} \leq \sqrt{\epsilon} \left( \nrm{\beta_h^L- \beta^L}{L^2(\zp)} + \nrm{\beta^L- \beta}{L^2(\zp)} \right)\leq k \sqrt{\epsilon} h_m^s \snrm{\beta^L}{H^s(\zup)}
$$ 
where $s$ is a constant dependent on the boundary's regularity, and $h_m$ a fixed maximum
mesh size on the microscopic level, independent on $\epsilon$.
In the same way one can set
$$
\begin{aligned}
\nrm{(\toutlh- \tout)\lrde }{L^2(\Oz)} & \leq \epsilon  \left( \nrm{\toutlh - \toutl}{L^2(\pilm)} + \nrm{\toutl - \tout}{L^2(\pilm)} \right)\\
& \leq k \epsilon \left( h_m^2  \nrm{\toutl}{H^{2,\nu}(\Pi_-^L)}  + k L \nrm{\toutl -\tout}{\ws{0}{2}{-1}{\pimlp}} \right) \leq k \,\epsilon ,
\end{aligned}
$$
where $\nu$ is a real parameter depending on the angle of the corner of $\Pi_-$ at $(0,f(0))$,
and $H^{2,\nu}$ the  weighted space defined p.388 Definition 8.4.1.1 \cite{Grisvard}, that 
takes into account the corner singularity of second derivatives of $\toutl$. 
These estimates give an upper bound on the convergence rate for the full boundary layer $\uiue$, namely:
\begin{equation}\label{esti.num}
\begin{aligned}
\nrm{u^\epsilon_h - \uiueh}{L^2(\Oz)} & \leq \nrm{u^\epsilon_h - u^\epsilon }{L^2(\Oz)} + \nrm{ u^\epsilon - \uiue }{L^2(\Oz)} + \nrm{\uiue - \uiueh }{L^2(\Oz)} \\
&  \leq H^2 \nrm{u^\epsilon}{H^{2,\nu}(\Oe)}+ k \epsilon^\td ,
\end{aligned} 
\end{equation}
where $H$ is a macroscopic mesh size presented in the next paragraph.
%%la m{\^e}me borne supp{\'e}rieure s'{\'e}crit pour $\uiuep$.

\paragraph{Rough solutions}
When computing numerical approximations of $u^\epsilon$, one
has to play with 3 concepts that are interdependent:
$h$ the mesh-size, $\epsilon$ the roughness size, and corner
singularities that depend on the shape of the domain.

In the periodic case considered in \cite{BrMiQam}, 
and for $f \in C^\infty(]0,1])$, in order to avoid that the 
roughness size goes under the mesh-size,
one could discretise the solution on a 
mesh such that $h\leq c\, \epsilon$.
  Due to estimates on
the interpolation error and $H^2(\Oe)$ regularity,
one obtains a good numerical agreement for convergence
rates between theoretical and numerical results (see \cite{BrMiQam}).

In the non-periodic setting, corner singularities occur 
near $\Gout$. In order to obtain convergent numerical
approximations of $u^\epsilon$ near $\Gout$, one should 
refine the mesh in the neighbourhood of $(1,\epsilon f(1/\epsilon))$.
At the same time, in the regular zones, the mesh-size
should stil be refined at least linearly with respect to $\epsilon$
(as in the peridic setting \cite{BrMiQam}).
 This complicates the local size 
of elements with respect to the size of the mesh (\cite{Grisvard} p.384).
Thus, simply setting uniformly $h:=c \epsilon$ does not provide accurate convergence results.
On the other hand, one aims to have a strong
control on the mesh size far from the corner: for instance in these zones, 
 the mesh-size could be fixed on a uniform grid. 
These considerations led us to use an overlapping
Schwartz algorithm \cite{QuVaBook.99}; we split $\Oe$ in two parts:
$\Oz$ is discretised with a structured grid of size $H:= k\epsilon^\gamma$ ($\gamma$ is discussed later),
whereas a second domain reads
$$
\Ozu:= \Oe \cap \left\{ x \in \RR \; \text{ s.t. } \; x_2 < \frac{\epsilon}{10} \right\} 
$$
and contains the rough sub-layer. On $\Ozu$ we perform 
mesh adaptation in order to capture geometrical and corner singularities.
The maximum/minimum mesh-sizes are set:
$$
h_{\min} = \min_{K\in \Tk}  h_K, \quad h_{\max} = \min_{K\in \Tk} h_K
$$
where $h_K$ is the diameter of triangle $K$ in the triangulation $\Tk$ of $\Ozu$.
At each step $m$ of the Schwartz algorithm, we solve
two problems. We set $\cUm$ to be the solution of 
$$
\left\{ 
\begin{aligned} 
& \Delta  \cUm =0 , \text{ in } \Oz, \\
& \cUm = 1,  \text{ on } \Gun, \\
& \cUm = \cV^{(m-1)},  \text{ on } \Gz, \\
& \ddn{\cUm}=0, \text{ on } \giop, 
\end{aligned}  
\right. 
$$
and $\cVm$ solves 
$$
\left\{ 
\begin{aligned} 
& \Delta  \cVm =0 , \text{ in } \Ozu, \\
& \cVm = 0,  \text{ on } \Geps, \\
& \cVm = \cUm,  \text{ on } \left\{ x_2=\frac{\epsilon}{10}\right\} , \\
& \ddn{\cVm}=0, \text{ on } ( \gio ) \cap\left\{ x \in \RR \; \text{ s.t. } \; x_2 < \frac{\epsilon}{10} \right\}  , 
\end{aligned}  
\right. 
$$
and we iterate the procedure until 
$$
\int_{(0,1)\times \{ 0 \}\cup(0,1)\times \{ \frac{\epsilon}{10} \}  } (\cUm-\cVm)^2 d\sigma(x) < {\rm tol},
$$ 
where tol is a constant set to 10$^{-10}$.
During this step both meshes are kept fixed.

Then we refine the sub-layer mesh $\Tk$ in order to account
the corner singularity. This step provides a new mesh-size
distribution updating $\hm$ and $\hM$. We use adaptative
techniques presented p. 92 of the {\tt freefem++} reference manual \cite{ffm}.
This  procedure is 
compatible with the mesh requirements displayed in Theorem 8.4.1.6 p. 392 in \cite{Grisvard} and guarantees  standard interpolation errors
with respect to the mesh size.

We iterate these two steps: solve the Schwartz domain decomposition 
problem and then adapt the mesh. The iterative algorithm stops
when $\hM < H$. Through this algorithm we insure both a 
given mesh size $H$ and a refined mesh near the corner.

We tested different values of $\gamma$ where setting $H = k \epsilon^\gamma$, $k$  a is  given constant, 
choosing $\gamma\geq \frac{5}{4}$ does no more change convergence  results below.
We plot in fig. \ref{mshsize}, $\hM$ and $\hm$ as functions
of $\epsilon$. The adaptative process gives approximately $\hm \sim c \epsilon^{2.29}$.

\begin{figure}[!h]
\begin{center}
\input{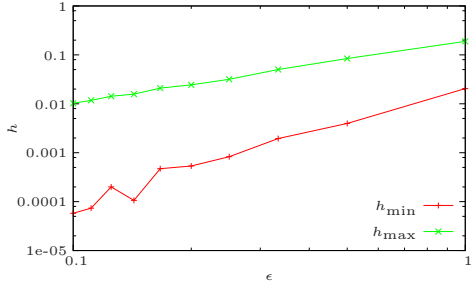}
\end{center}
\caption{Mesh sizes $\hm$ and $\hM$ as functions of $\epsilon$}\label{mshsize}
\end{figure}
We plot in fig. \ref{fig.rough.meshs}, the meshes obtained thanks to
our iterative scheme for $\epsilon\in\{ \ud,\frac{1}{3} \}$. In fig. \ref{fig.rough.sols}, we display the corresponding solutions $u^\epsilon_h$. Next,
we construct boundary layers 
using microscopic correctors above.
We compute the errors $\ueh - u^0$, $\ueh - u^1$, 
$\ueh-\uiueph$ and $\ueh-\uiueh$ in the 
$L^2(\Oz)$ norms, and display them as a function of $\epsilon$ in fig.\ref{err}.
\begin{figure}[!h]
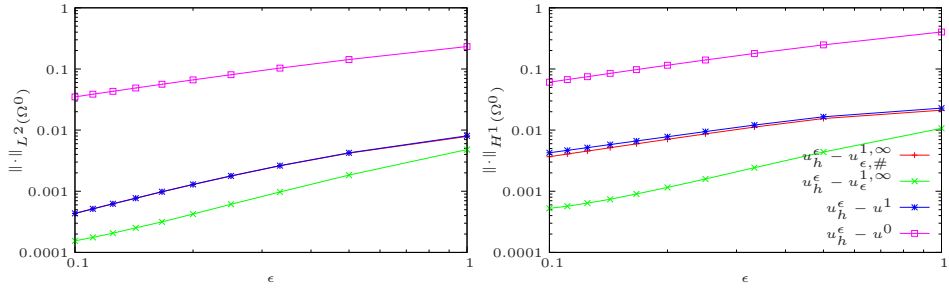

\begin{center}
\input{errl2_dd_coarse} 
\input{errh1_dd_coarse}
\end{center}
\caption{Errors in the $L^2(\Oz_h)$ (left) and the $H^1(\Oe_h)$ (right) norms with respect to  $\epsilon$}\label{err} 
\end{figure}
The numerical convergence rate, obtained by interpolating results above as a powers of $\epsilon$, is displayed in table \ref{table}.
\begin{table}[h!]
\begin{center}
\begin{tabular}{|c|c|c|c|c|}
\hline
norm / approx.& $\ueh - u^0$ & $\ueh - u^1$ & $\ueh-\uiueph$ &  $\ueh-\uiueh$ \\
\hline
\hline
$L^2(\Oz)$ & 0.78783 & 1.11 & 1.1&	1.462 \\
\hline
$H^1(\Oz)$ & 0.787 & 0.6869&  0.70 & 1.346347 \\
\hline
\end{tabular}
\end{center}
\caption{The errors convergence rates displayed as powers of $\epsilon$ }\label{table}
\end{table}

\paragraph{Discussion}
When the vertical correctors are not present, the boundary layer
approximation is not only less accurate but also the rate of convergence
is less than first order, the difference is visible in $L^2(\Oz)$
but is significant in the $H^1(\Oz)$ norm.
Nevertheless, and as explained above, when using a single microscopic computation
of the correctors for every $\epsilon$, it is not possible to 
get better convergence results than $\epsilon^\td$. This is actually 
what we obtain for our more accurate approximation $\uiueh$.
This validates our theoretical results. The surprising
phenomenon that we are at this point not able to 
justify is the poor convergence rate of the wall law $u^1$,
that should according to our estimates be $\epsilon^\td$. Observed
in  \cite{BrMiQam}, $u^1$ performs even worse convergence rate
than $u^0$ in the $H^1(\Oz)$ norm. The results of Theorem \ref{a.priori}
are fairly approximated for what concerns the $H^1(\Oz)$ error of $\uiueh$.

%%%%-------------------------------------------------------------------%%%%%

\section{Conclusion}
Our approach provides an almost complete understanding of the non-periodic
case for lateral homogeneous Neumann boundary conditions in the
straight case, (no curvature effects of the rough boundary \cite{NeNeMi.06}).
A forthcoming paper should adapt these results to the case
mentioned in the introduction: a smooth boundary forward and
backward the rough domain {\em via} domain decomposition techniques.
Another extension  to the Stokes system should follow as well.

\bigskip
\bigskip
\noindent {\it Acknowledgements.}
%\section{Acknoledgements}

The author would like to thank C. {\sc Amrouche} for his advises and support, 
S. {\sc Nazarov} for fruitful discussions and clarifications, and H. {\sc Teismann} for proof reading.
This research was partially funded by Cardiatis\footnote{www.cardiatis.com}, 
an industrial partner designing and commercializing metallic wired stents.

\bibliographystyle{plain}
\bibliography{eqred}

\begin{thebibliography}{10}

\bibitem{AmGiGiI.94}
C.~Amrouche, V.~Girault, and J.~Giroire.
\newblock Weighted sobolev spaces and laplace's equation in {$\R^n$}.
\newblock {\em Journal des Mathematiques Pures et Appliquees}, 73:579--606,
  January 1994.

\bibitem{AmGiGiII.97}
C.~Amrouche, V.~Girault, and J.~Giroire.
\newblock Dirichlet and neumann exterior problems for the n-dimensional laplace
  operator an approach in weighted sobolev spaces.
\newblock {\em Journal des Mathematiques Pures et Appliquees}, 76:55--81(27),
  January 1997.

\bibitem{BrBoMi}
E.~Bonnetier, D.~Bresch, and V.~Milisic.
\newblock Blood flow modelling in stented arteries: new convergence results of
  first order boundary layers and wall-laws for a rough neumann-laplace
  problem.
\newblock submitted.

\bibitem{BrMiQam}
D.~Bresch and V.~Milisic.
\newblock High order multi-scale wall laws~: part i, the periodic case.
\newblock accepted for publication in Quart. Appl. Math. 2008.

\bibitem{BrMiCras}
D.~Bresch and V.~Milisic.
\newblock Towards implicit multi-scale wall laws.
\newblock accepted for publication in C. R. Acad. Sciences, S{\'e}rie
  Math{\'e}matiques, 2008.

\bibitem{Evans.Book}
L.~C. Evans.
\newblock {\em Partial differential equations}, volume~19 of {\em Graduate
  Studies in Mathematics}.
\newblock American Mathematical Society, Providence, RI, 1998.

\bibitem{GiTru.Book}
D.~Gilbarg and N.~S. Trudinger.
\newblock {\em Elliptic partial differential equations of second order}.
\newblock Classics in Mathematics. Springer-Verlag, Berlin, 2001.
\newblock Reprint of the 1998 edition.

\bibitem{Grisvard}
P.~Grisvard.
\newblock {\em Elliptic problems in nonsmooth domains}, volume~24 of {\em
  Monographs and Studies in Mathematics}.
\newblock Pitman (Advanced Publishing Program), Boston, MA, 1985.

\bibitem{Ha.71}
B.~Hanouzet.
\newblock Espaces de {S}obolev avec poids application au probl{\`e}me de
  {D}irichlet dans un demi espace.
\newblock {\em Rend. Sem. Mat. Univ. Padova}, 46:227--272, 1971.

\bibitem{ffm}
F.~Hecht, O.~Pironneau, A.~Le~Hyaric, and Ohtsuka K.
\newblock {\em Freefem++}.
\newblock Laboratoire Jacques-Louis Lions, Universite Pierre et Marie Curie,
  Paris, 2005.

\bibitem{JaMiSIAM.00}
W.~J{\"a}ger and A.~Mikeli{\'c}.
\newblock {On the interface boundary condition of Beavers, Joseph, and
  Saffman.}
\newblock {\em SIAM J. Appl. Math.}, 60(4):1111--1127, 2000.

\bibitem{JaMiJDE.01}
W.~J{\"a}ger and A.~Mikeli{\'c}.
\newblock On the roughness-induced effective boundary condition for an
  incompressible viscous flow.
\newblock {\em J. Diff. Equa.}, 170:96--122, 2001.

\bibitem{JaMiNe.01}
W.~J{\"a}ger, A.~Mikeli{\'c}, and N.~Neuss.
\newblock {Asymptotic analysis of the laminar viscous flow over a porous bed.}
\newblock {\em SIAM J. Sci. Comput.}, 22(6):2006--2028, 2001.

\bibitem{JaMiPise.96}
Willi J{\"a}ger and Andro Mikeli{\'c}.
\newblock On the boundary conditions at the contact interface between a porous
  medium and a free fluid.
\newblock {\em Ann. Scuola Norm. Sup. Pisa Cl. Sci. (4)}, 23(3):403--465, 1996.

\bibitem{Ku.80.book}
A.~Kufner.
\newblock {\em Weighted Sobolev spaces.}
\newblock BSB B. G. Teubner Verlagsgesellschaft, teubner-texte zur mathematik
  edition, 1980.

\bibitem{Ne.Book.67}
J.~Ne{\v{c}}as.
\newblock {\em Les m{\'e}thodes directes en th{\'e}orie des {\'e}quations
  elliptiques}.
\newblock Masson et Cie, {\'E}diteurs, Paris, 1967.

\bibitem{NeNeMi.06}
N.~Neuss, M.~Neuss-Radu, and A.~Mikeli{\'c}.
\newblock Effective laws for the poisson equation on domains with curved
  oscillating boundaries.
\newblock {\em Applicable Analysis}, 85:479--502, 2006.

\bibitem{QuVaBook.99}
A.~Quarteroni and A.~Valli.
\newblock {\em {Domain decomposition methods for partial differential
  equations.}}
\newblock Numerical Mathematics and Scientific Computation. Oxford Science
  Publication, Oxford, 1999.

\bibitem{SaPeZa.85}
E.~Sanchez-Palencia and A.~Zaoui.
\newblock {\em Homogenization techniques for composite media}, volume 272 of
  {\em Lecture Notes in Physics}.
\newblock Springer-Verlag., 1987.

\end{thebibliography}

\newpage

\begin{figure}[!h] 
\begin{center}
\includegraphics[width=0.35\textwidth]{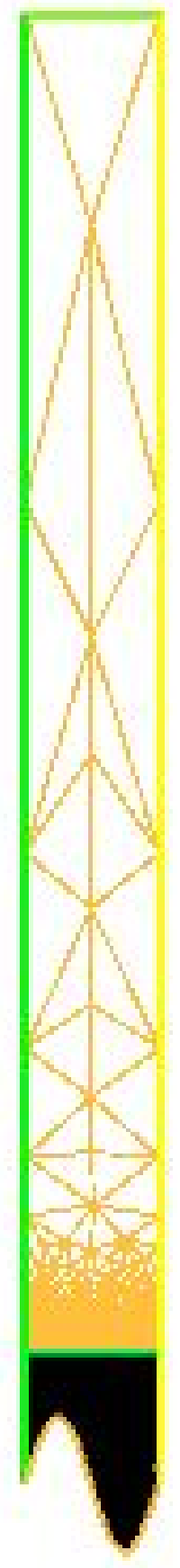}
\includegraphics[width=0.15\textwidth]{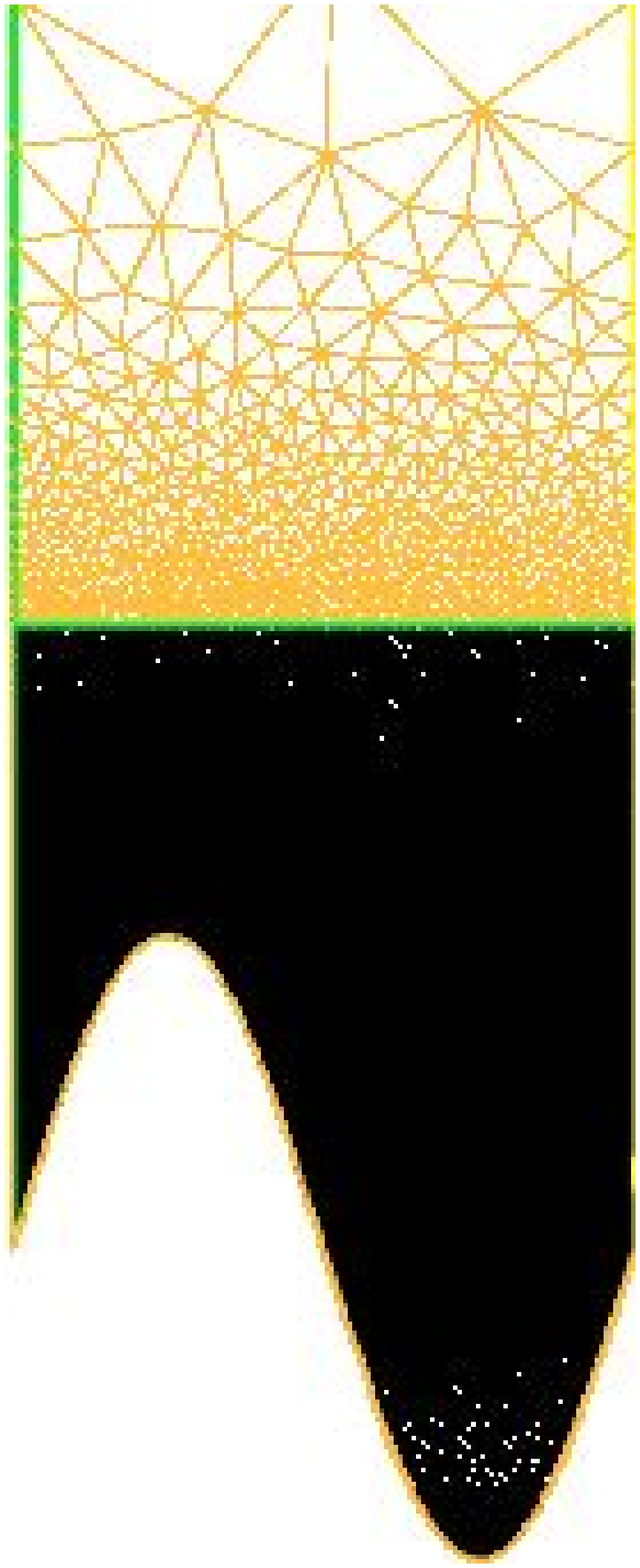}
\end{center}
\caption{The microscopic periodic cell after adaptative mesh refinement}
\label{mesh.beta}
\end{figure}

\begin{figure}[h!] 
\begin{center}
\includegraphics[width=0.45\textwidth]{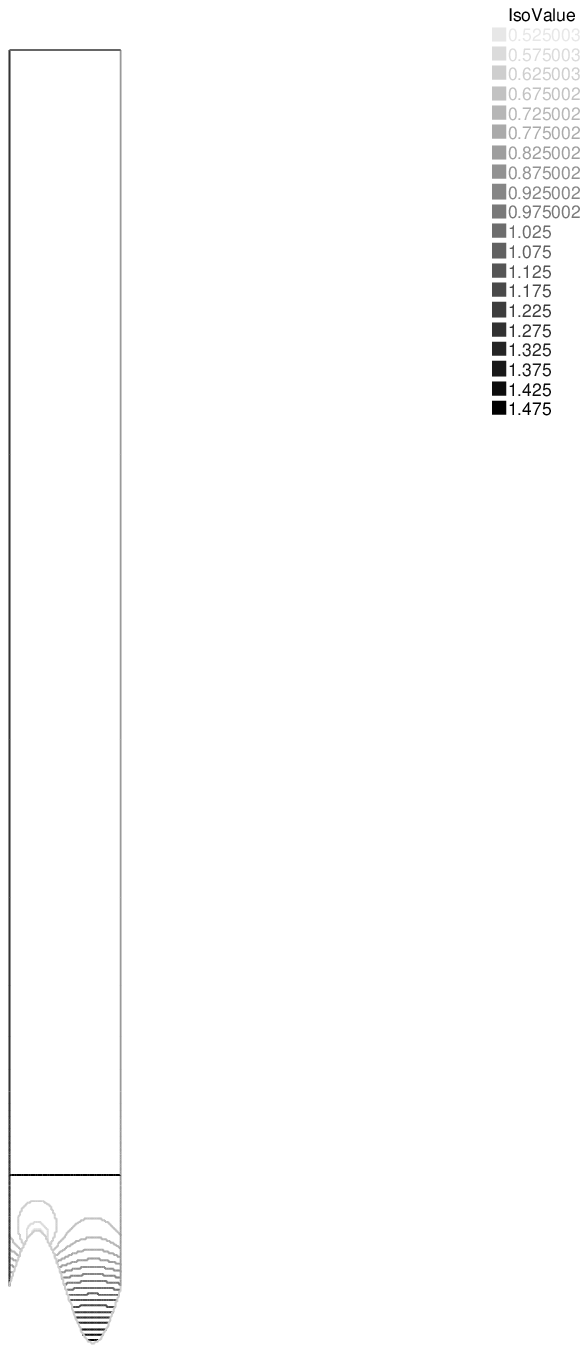}
\includegraphics[width=0.45\textwidth]{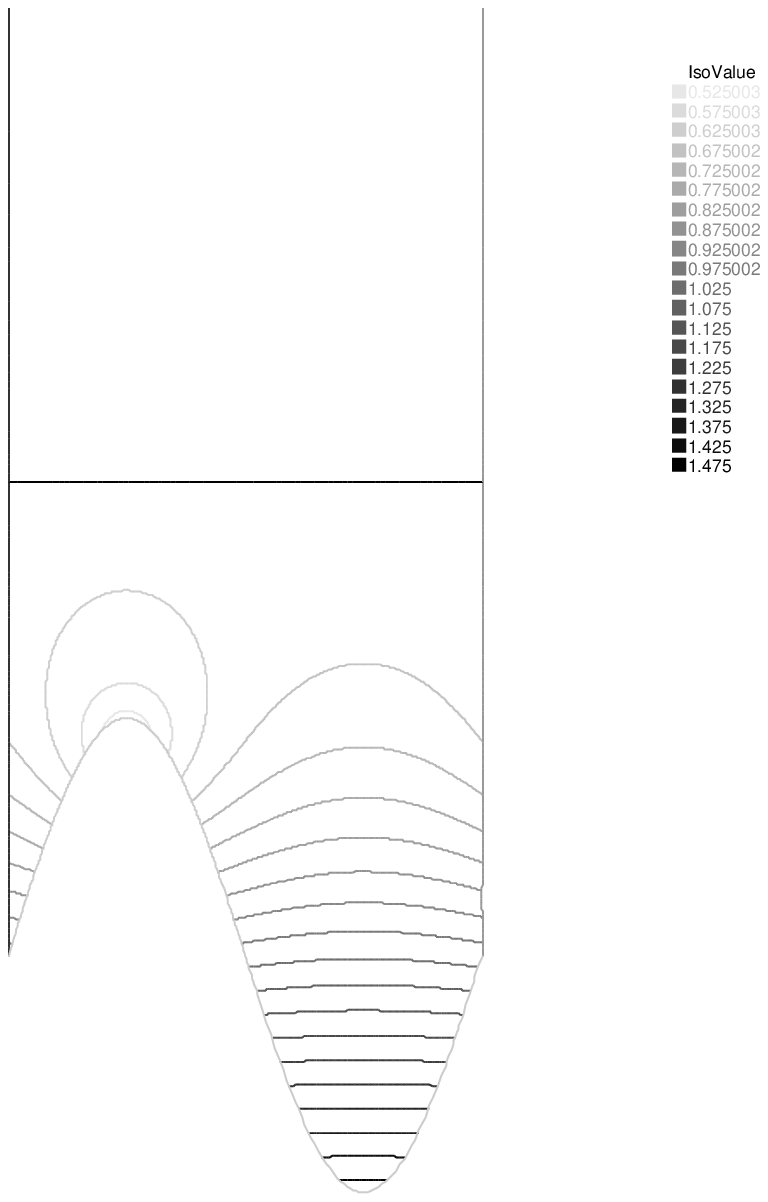}
\end{center}
\caption{The microscopic periodic cell corrector $\beta^L$}\label{beta}
\end{figure}
\newpage

\begin{figure}[h!] 
\begin{center}
\includegraphics[width=0.9\textwidth]{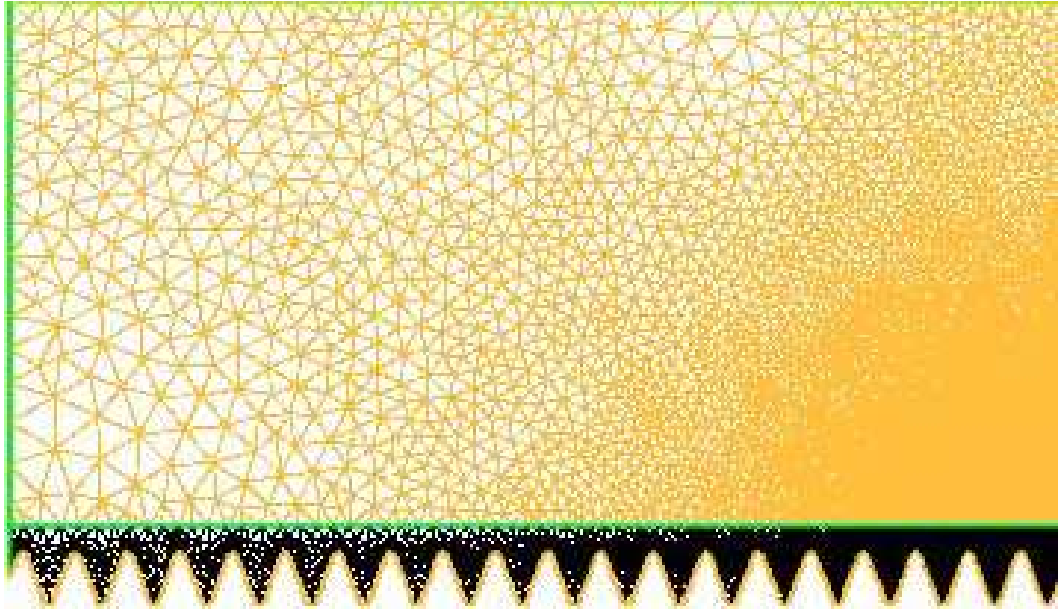}
\end{center}
\caption{The microscopic domain of $\toutl$  after adaptative mesh refinement}
\label{mesh.xi}
\end{figure}
\begin{figure}[h!]  
\begin{center}
\includegraphics[width=0.9\textwidth]{xiout.epsi}
\end{center}
\caption{The microscopic corrector $\toutl$}
\label{xi.out}
\end{figure}

\begin{figure}[h] 
\begin{center}
\includegraphics[width=0.79\textwidth]{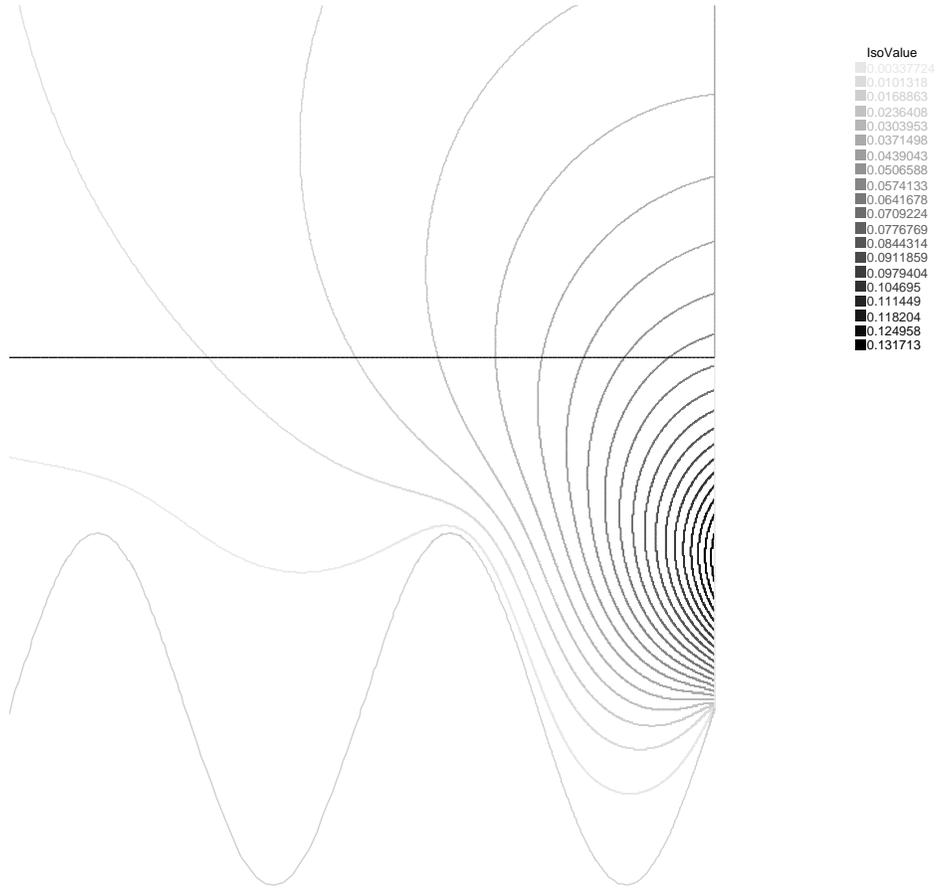}
\end{center}
\caption{A zoom near the corner singularity of the microscopic corrector $\touth$ }\label{zoom.xiout}
\end{figure}

\newpage

\begin{figure}[!h]
\begin{center}
\includegraphics[width=0.45\textwidth]{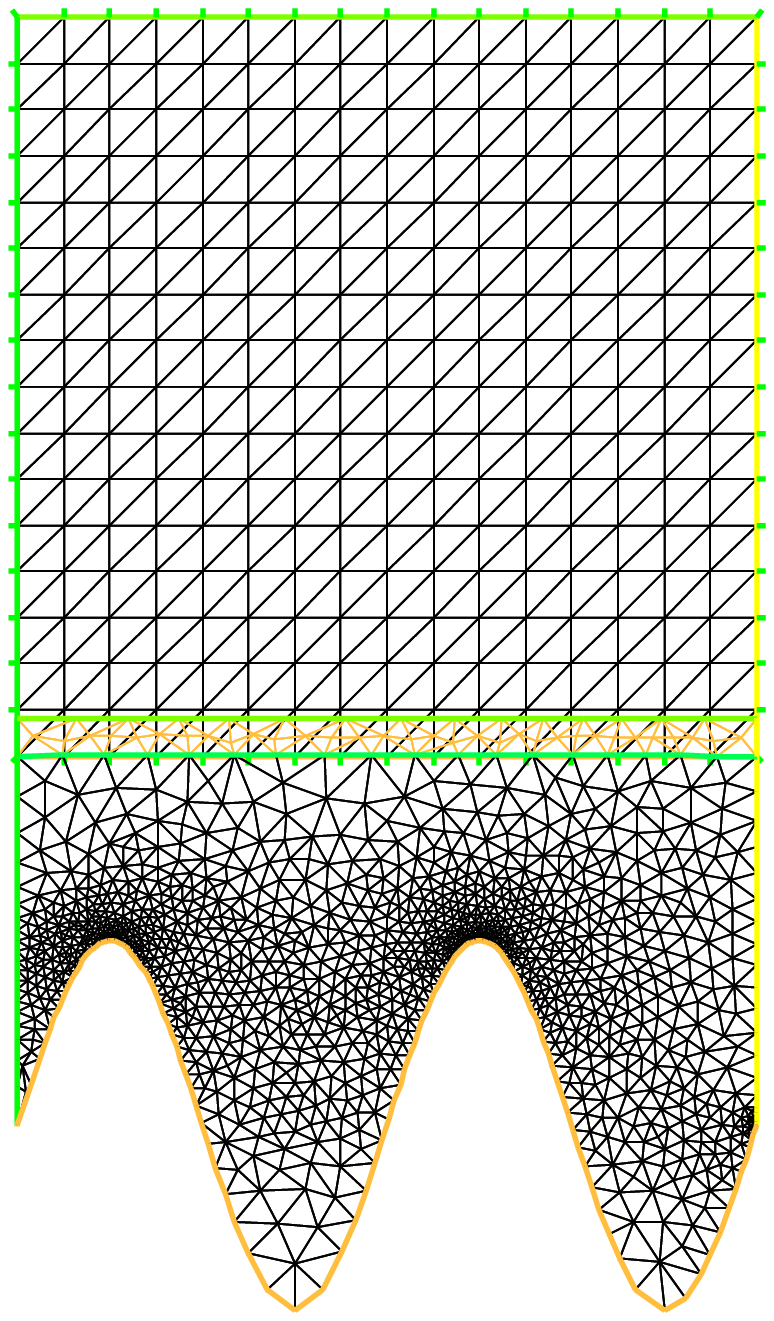}
\includegraphics[width=0.45\textwidth]{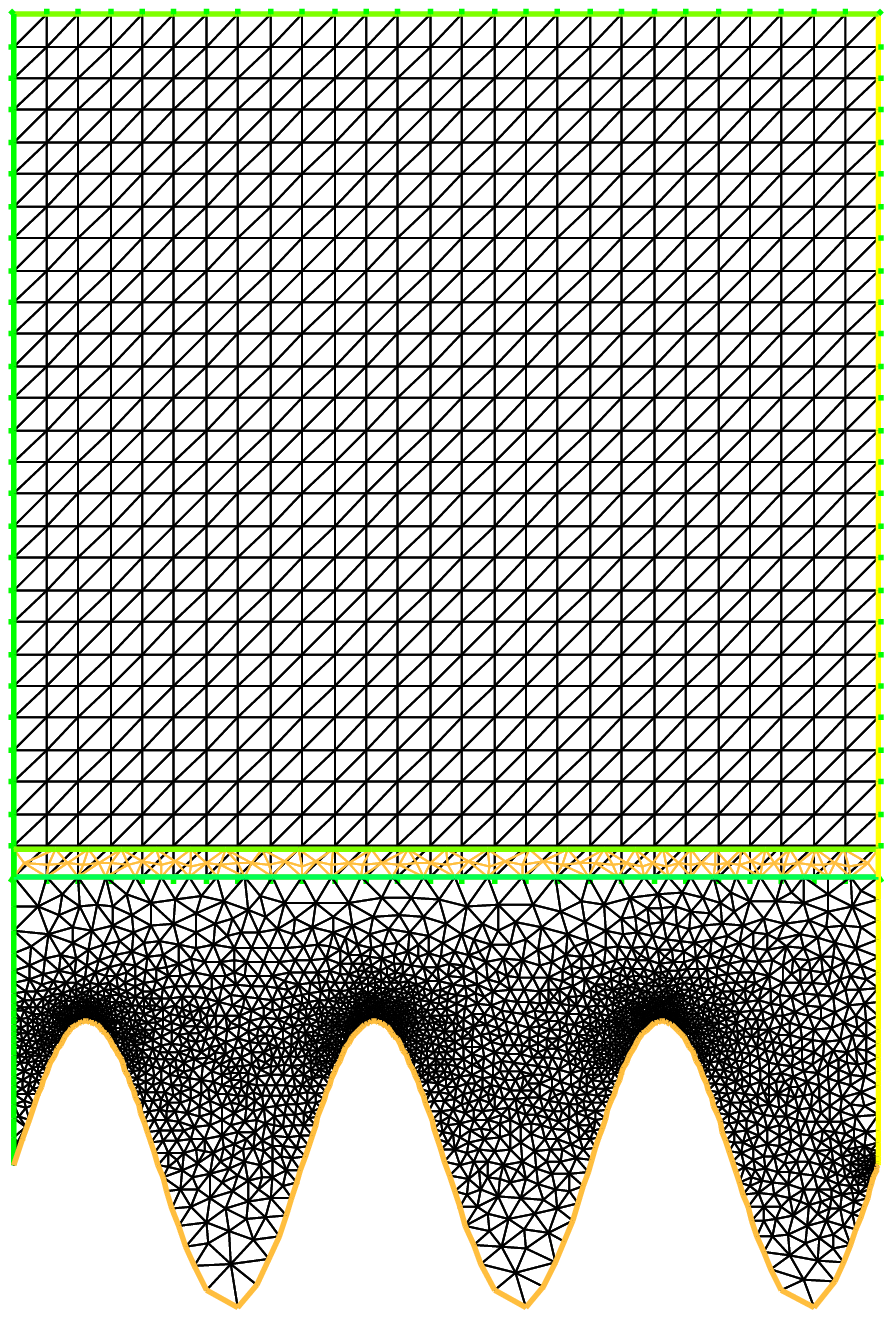}
\end{center}
\caption{The meshes for the rough solution for  $\epsilon\in\{\ud,\sfrac{1}{3}  \}$ using a decomposition method}\label{fig.rough.meshs} 
\end{figure}
\begin{figure}[!h]
\begin{center}
\includegraphics[width=0.45\textwidth]{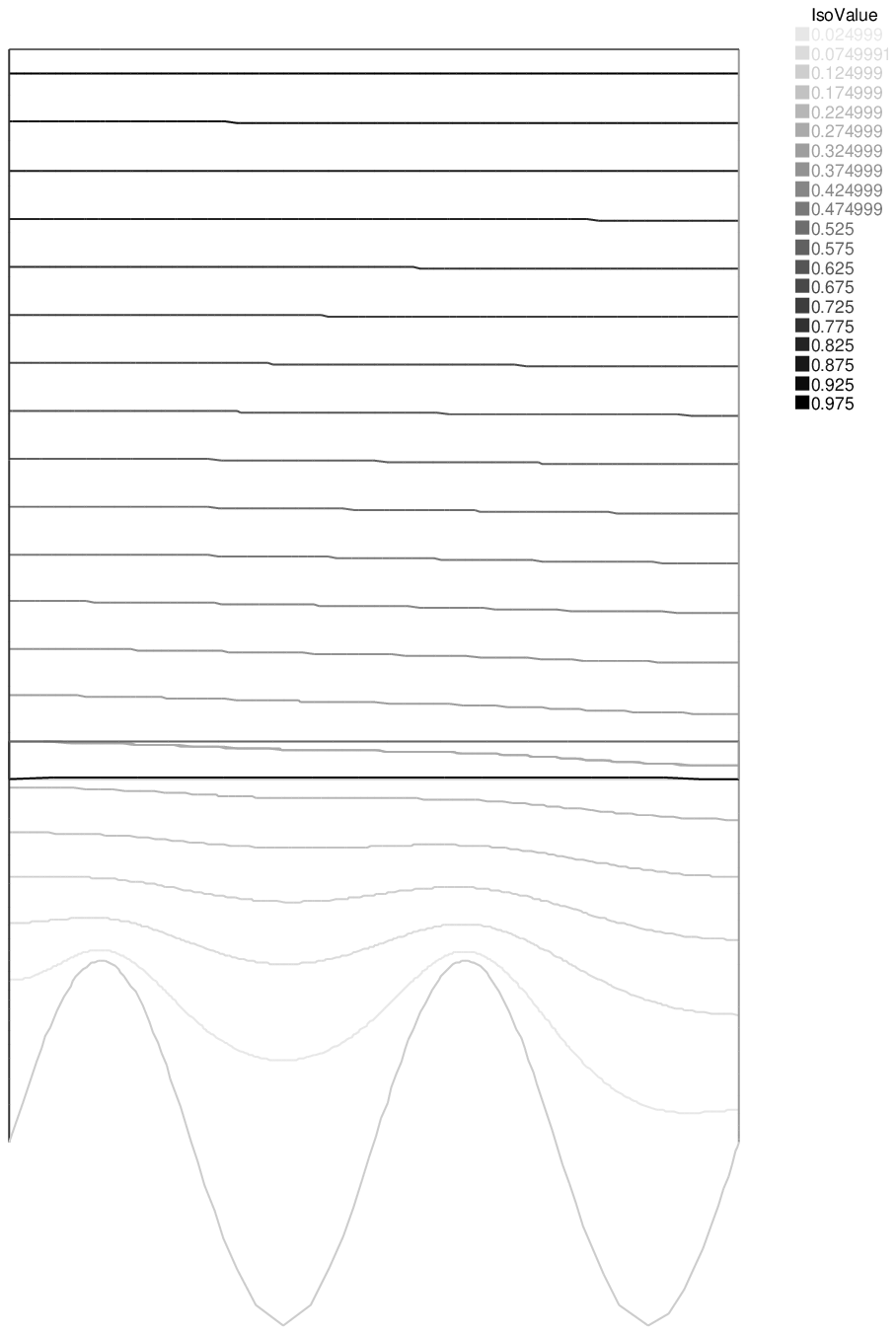}
\includegraphics[width=0.45\textwidth]{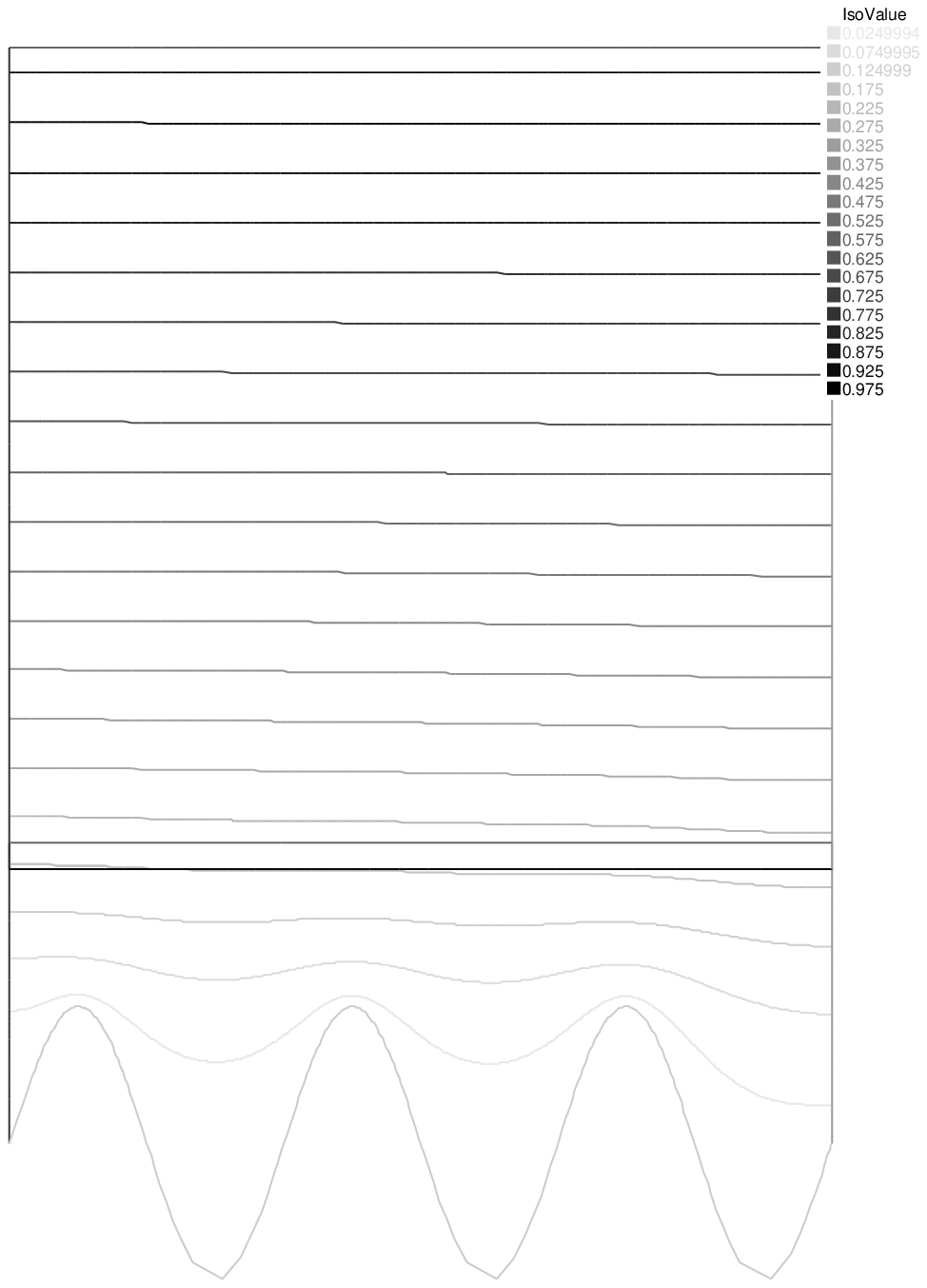}
\end{center}
\caption{The rough solution computed for  $\epsilon\in\{\ud,\sfrac{1}{3}  \}$ using a decomposition method}\label{fig.rough.sols} 
\end{figure}

\end{document}